\theoremstyle{plain}
\newtheorem{thm}{Theorem}
\newtheorem{lem}[thm]{Lemma}
\newtheorem{prop}[thm]{Proposition}
\theoremstyle{definition}
\newtheorem{ex}[thm]{Example}
\theoremstyle{remark}
\newtheorem{rmk}[thm]{Remark}
\newenvironment{pf}{\begin{proof}}{\end{proof}}
\newcommand{\veps}{\varepsilon}
\DeclareMathOperator{\uhp}{\mathcal{H}}
\DeclareMathOperator{\Tr}{Tr}
\DeclareMathOperator{\im}{im}
\DeclareMathOperator{\SL}{SL}
\DeclareMathOperator{\PSL}{PSL}
\DeclareMathOperator{\Ind}{Ind}
\DeclareMathOperator{\diag}{diag}
\newcommand*{\df}{\mathrel{\vcenter{\baselineskip0.5ex \lineskiplimit0pt \hbox{\scriptsize.}\hbox{\scriptsize.}}} =}
\newcommand*{\fd}{=\mathrel{\vcenter{\baselineskip0.5ex \lineskiplimit0pt \hbox{\scriptsize.}\hbox{\scriptsize.}}}}
\providecommand{\abs}[1]{\left\lvert#1\right\rvert}
\providecommand{\twomat}[4]{\left(\begin{matrix}#1&#2\\#3&#4\end{matrix}\right)}
\providecommand{\pseries}[2]{#1[\![ #2 ]\!]}
\newcommand{\QQ}{\mathbf{Q}}
\newcommand{\FF}{\mathbf{F}}
\newcommand{\CC}{\mathbf{C}}
\newcommand{\ZZ}{\mathbf{Z}}
\newcommand{\PP}{\mathbf{P}}
\newcommand{\be}{\textbf{e}}
\begin{document}
\title[Three-dimensional imprimitive representations of the modular group]{Three-dimensional imprimitive representations of the modular group and their associated modular forms}
\author{Cameron Franc and Geoffrey Mason}
\date{}

\begin{abstract}
This paper uses previous results of the authors \cite{FM2} to study certain noncongruence modular forms. We prove that these forms have unbounded denominators, and in certain cases we verify congruences of Atkin--Swinnerton-Dyer type \cite{ASD} satisfied by the Fourier coefficients of these forms. Our results rest on group-theoretic facts about the modular group $\Gamma$, a detailed study of imprimitive three-dimensional representations of $\Gamma$, and the theory of their associated vector-valued modular forms. For the proof of the congruences we also make essential use of a result of Katz \cite{K1}. 
\end{abstract}
\maketitle

\tableofcontents

\section{Introduction}

The main purpose of this paper is to study arithmetic properties of the Fourier coefficients of a class of noncongruence modular forms.\
In a `top-down' approach to the general problem, one takes a finite-index subgroup $G$ of $\Gamma:=\PSL_2(\ZZ)$ and considers
the spaces of weight $k$ modular forms $M_k(G, \chi)$ for characters $\chi: G\rightarrow \CC^*$ of finite order.\ The kernel $H=\ker\chi$
is typically a \emph{noncongruence} subgroup of $\Gamma$, and a form $f\in M_k(G, \chi)$ is typically a noncongruence modular form on $H$.\
 The largest subgroups $G$ (i.e., those of least index in $\Gamma$) which actually give rise to noncongruence forms in this manner are the
 nonnormal subgroups of $\Gamma$ of index $3$.\ These are conjugate to  $\bar{\Gamma}_0(2)$ (for notation, see below), and it is this case that we 
 are concerned with here.

In a well-known paper \cite{ASD}, Atkin-Swinnerton-Dyer made perhaps the first detailed study of the Fourier coefficients of noncongruence forms.\ They examined several general phenomenon, including so-called \emph{unbounded denominators} and \emph{ASD-type congruences}.\ Their observations have inspired many intersting results, cf.\ \cite{ALL1}, \cite{KL1}, \cite{KL2}, \cite{LL1}, \cite{LLY1}, \cite{L1}, \cite{Scholl}.\ In the present paper, we give a general proof of the unbounded denominators conjecture for noncongruence forms in $M_k(\bar{\Gamma}_0(2), \chi)$, and prove ASD-type congruences in certain cases.

We describe some of our results.\ The finite-order characters $\chi=\chi_{n, \veps}$ of $\bar{\Gamma}_0(2)$ are conveniently labelled by 
a positive integer $n$ and a sign $\veps=\pm 1$.\ With this notation, it transpires that $\ker\chi$ is congruence if, and only if,
$n{\mid}24$ (Theorem \ref{thmKlvl1}).\ We will show (Theorem \ref{thmgenubd}) that  if $f\in M_k(\bar{\Gamma}_0(2), \chi)$ is \emph{any} nonzero holomorphic modular form with algebraic Fourier coefficients, and if $p{\mid}n$ is a prime, then the powers of $p$ that divide the denominators of $f$ are \emph{unbounded} under any of the following circumstances: $p \geq 5$; $p=3$ and $p^2{\mid}n$; $p=2$ and $p^4{\mid}n$.

 The modular curves $X_H\df H\backslash \mathcal{H}\cup\PP^1(\QQ)$ defined by $\ker\chi$ and related groups are of  interest in themselves.\
 For example, we will show that if $n$ is odd and $\veps = -1$ then $X_H$ is a hyperelliptic curve (noncongruence unless $n{\mid}24$) given by the affine equation $y^2 = x^n+64$.\ (See Theorems \ref{thmhyperell} and  \ref{thmhypereqn} for more complete results).\ This leads to ASD-style 3-term congruences for primes $p\equiv -1$ (mod $n$) satisfied by the  coefficients of a basis of holomorphic differentials on $X_H$.
 
 The methods that we use to establish these results, advertised and illustrated (in the congruence setting) in a forthcoming
 paper \cite{FM2}, are likely to be unfamiliar to many readers, and we will say something about them here.\ Given any
form $f\in M_k(\bar{\Gamma}_0(2), \chi)$ as above, there is a $3$-dimensional \emph{vector-valued modular form} (vvmf) $F\df\ ^{t}(f_1, f_2, f_3)$ whose components $f_i$ are the forms $f|_k\gamma_i$, $\gamma_i$ ranging over cosets representatives of $\bar{\Gamma}_0(2)\backslash\Gamma$.\ Such an $F$ satisfies the transformation law
\begin{eqnarray}\label{vvmfdef}
F|_k\gamma(\tau) = \rho(\gamma)F(\tau)\ \ \ (\gamma \in \Gamma),
\end{eqnarray}
where $\rho\df \Ind_{\bar{\Gamma}_0(2)}^{\Gamma} \chi$ is the 3-dimensional representation of $\Gamma$ obtained by \emph{inducing} $\chi$, and  sits in the weight $k$ graded piece of the space
\begin{eqnarray*}
\mathcal{H}(\rho) = \oplus_{k\geq k_0} \mathcal{H}_k(\rho)
\end{eqnarray*}
of all holomorphic vector-valued modular forms on $\Gamma$ that transform as in (\ref{vvmfdef}) for some weight $k$.\ With the harmless assumption that
$\rho$ is \emph{irreducible} (equivalently, $n\not= 1$ or $3$),  the lowest nonzero weight space
is $1$-dimensional, and a good proportion of our effort is expended on understanding the nature of a spanning form
$F_0\in \mathcal{H}_{k_0}(\rho')$ where $\rho'$ is a representation \emph{equivalent} to $\rho$ with the property that $\rho'(\bar{T})$ is 
\emph{diagonal}.\  Some techniques from the theory of vector-valued modular forms (described in \cite{FM2}) show that the 3 components of the weight zero vvmf $F_0/\eta^{2k_0}$ span the solution space of a Fuchsian equation.\ This leads to the situation that the components of $F_0$
(and then also the components of \emph{every} vvmf in $\mathcal{H}(\rho)$ such as the $f$ that we started with) can be described  in terms of classical forms of level $1$ and hypergeometric series $_{3}F_2\left(a, a+1/3, a+2/3; a-b+1, a-c+1; K\right)$.\ Here, $K=1728j^{-1}$ with $j$  the absolute modular invariant, and $a, b, c$ depend on the eigenvalues of $\rho(\bar{T})$.

\medskip
The technique of relating components of vector-valued modular forms to hypergeometric series in order to study the question of
unbounded denominators was used in \cite{FM1}, where the 2-dimensional case was completely settled.\ However, in that setting one does
not encounter noncongruence modular forms.\ Chris Marks first studied the 3-dimensional case in \cite{Marks}, and obtained results about unbounded denominators for vector-valued modular forms of large enough weight.

The paper is organized as follows.\ In Section 2 we  treat the characters $\chi$ of $\bar{\Gamma}_0(2)$ and their induced representations, and in particular we determine when $ker\rho$ is congruence.\ In Section 3 we deal with the modular curves defined by $ker\chi$ and related groups, establishing that the curves we are interested in are hyperelliptic.\ In Section 4 we make a detailed study of the Fourier coefficients of the components of the lowest weight vvmf $F_0$.\ These are products of a power of $\eta$, a power of $K$, and a hypergeometric series of type $_3F_2$, and we show that each component  of $F_0$  satisfies the unbounded denominator statement of Theorem \ref{thmgenubd}.\ In Section 5, we establish the general unbounded denominator result of Theorem \ref{thmgenubd} by showing how it follows from the special case of $F_0$ together with some further general theory of vector-valued modular forms.\ In Section 6 we establish ASD-type congruences for forms in $S_{k_0}(H, \chi)$ in certain cases when $k_0=2$.\ Having available the explicit equation $y^2=x^n+64$ for the curves makes the zeta function accessible, and we can ultimately appeal to a theorem of Katz \cite{K1}.\ Interestingly, this approach \emph{eschews} our explicit formulas for the forms  in terms of hypergeometric series. It would be of interest to find a proof of our congruences using hypergeometric series in place of \cite{K1}.

\section{Imprimitive representations of dimension $3$}
We use the following notation: $\Gamma \df \PSL_2(\ZZ)$, and if $M$ is either an element or a subgroup of $\SL_2(\ZZ)$ then  $\bar M$ denotes its image in 
$\Gamma$. Thus $\bar \Gamma(n)$ and $\bar \Gamma_0(n)$ are the images of the usual congruence subgroups $\Gamma(n)$ and $\Gamma_0(n)$ in $\Gamma$.\ Given any group $G$, the notation $G'$ denotes the commutator subgroup of $G$.\ We make use of the following elements of $SL_2(\ZZ)$:
\begin{align*}
R &\df \twomat 01{-1}{-1},&  S &\df \twomat 0{-1}10,  &T&\df \twomat 1101,\\
 U &\df \twomat 10{2}{1},& V &\df TU^{-1}= \twomat {-1}1{-2}1.
\end{align*}
Before classifying the imprimitive representations of index $3$, we describe the subgroups of $\Gamma$ of index $3$.\ This is well-known, but we give it for completeness.
\begin{lem}\label{lemindex3}
  The group $\Gamma$ contains exactly four subgroups of index $3$.\ One of these is normal, and it is a congruence subgroup of level $3$. The remaining three subgroups are conjugate to $\bar \Gamma_0(2)$.
\end{lem}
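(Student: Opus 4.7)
The plan is to enumerate the index-$3$ subgroups by exploiting the well-known free-product presentation $\Gamma \cong C_2 \ast C_3 = \langle \bar S, \bar R \mid \bar S^2 = \bar R^3 = 1\rangle$ together with the standard correspondence between subgroups of index $3$ in $\Gamma$ and $S_3$-conjugacy classes of transitive homomorphisms $\phi \colon \Gamma \to S_3$ (via the coset action). By the universal property of the free product, such a $\phi$ is freely specified by $s \df \phi(\bar S)$ with $s^2 = 1$ and $r \df \phi(\bar R)$ with $r^3 = 1$. Since a single transposition cannot act transitively on $\{1,2,3\}$, transitivity forces $r$ to be a $3$-cycle, while $s$ may be arbitrary among $\{1, (12), (13), (23)\}$, giving $4 \cdot 2 = 8$ transitive homomorphisms.

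Under simultaneous $S_3$-conjugation $(s,r) \mapsto (gsg^{-1}, grg^{-1})$, these eight homomorphisms fall into exactly two orbits: an orbit of size $2$ consisting of $(1, (123))$ and $(1, (132))$, and an orbit of size $6$ consisting of all pairs with $s$ a transposition (one checks the centralizer of such a pair in $S_3$ is trivial). The first orbit corresponds to the kernel $N$ of the map, a single \emph{normal} subgroup of index $3$ with $\Gamma/N \cong C_3$; the second orbit yields a single conjugacy class of three distinct non-normal subgroups of index $3$. This gives $1 + 3 = 4$ subgroups in total.

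It remains to identify these two classes. For $N$, I would use the standard isomorphism $\Gamma/\bar\Gamma(3) \cong \PSL_2(\FF_3) \cong A_4$ combined with the fact that $A_4$ has a unique subgroup of index $3$, namely the Klein four-group; the image of $N$ in $A_4$ must therefore equal this subgroup, so $N \supseteq \bar\Gamma(3)$, which shows $N$ is a congruence subgroup of level dividing $3$, hence level exactly $3$ since $N \neq \Gamma$. For the non-normal class, $\bar\Gamma_0(2)$ has index $3$ in $\Gamma$ by the standard formula $[\Gamma : \bar\Gamma_0(n)] = n\prod_{p\mid n}(1 + 1/p)$ together with $-I \in \Gamma_0(2)$; moreover $\bar S \notin \bar\Gamma_0(2)$ because $S$ has lower-left entry $1$, whereas $\bar S \in N$ since $N$'s defining homomorphism kills $\bar S$. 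Consequently $\bar\Gamma_0(2) \neq N$, forcing $\bar\Gamma_0(2)$ into the conjugacy class of the three non-normal index-$3$ subgroups. The main obstacle is purely organizational bookkeeping of the orbits; the only mildly subtle step is ruling out level $1$ for $N$, which is handled by observing $N \neq \Gamma$.
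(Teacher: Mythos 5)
Your enumeration is a genuinely different route from the paper's. The paper starts with an arbitrary index-$3$ subgroup $G$, passes to the coset-action map $\Gamma\to S_3$, and analyzes the order of the image of $\bar T$: it uses that $\bar\Gamma(3)$ (resp.\ $\bar\Gamma(2)$) is the normal closure of $\bar T^3$ (resp.\ $\bar T^2$), that $\Gamma/\bar\Gamma(3)\cong A_4$ has no $S_3$ quotient, and a Sylow argument to conjugate $G$ onto $\bar\Gamma_0(2)$. You instead enumerate all transitive homomorphisms $\Gamma\to S_3$ from the presentation $\Gamma\cong C_2\ast C_3$, count orbits, and then identify the two classes directly. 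Your approach buys a transparent count of exactly four subgroups (which the paper leaves somewhat implicit) and avoids the Sylow step and the normal-closure fact for $\bar T^2$, at the price of invoking the free-product presentation; the orbit bookkeeping and the identification of the non-normal class via $[\Gamma:\bar\Gamma_0(2)]=3$ and $\bar S\notin\bar\Gamma_0(2)$ are all correct.

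One step, however, does not follow as written: from ``$A_4$ has a unique subgroup of index $3$'' you conclude that the image of $N$ in $A_4\cong\Gamma/\bar\Gamma(3)$ equals the Klein four-subgroup. A priori that image is a normal subgroup of index $1$ or $3$, and nothing you have said rules out $N\bar\Gamma(3)=\Gamma$, i.e.\ the image being all of $A_4$ (this cannot be excluded by abstract group theory alone, since $\bar\Gamma(3)$ is free and does surject onto $\ZZ_3$). The repair is immediate from what you already proved: the preimage in $\Gamma$ of the Klein four-subgroup is a \emph{normal} subgroup of index $3$ containing $\bar\Gamma(3)$, and your orbit count shows $\Gamma$ has exactly one normal subgroup of index $3$, so this preimage is $N$ and hence $N\supseteq\bar\Gamma(3)$. (Alternatively, argue as the paper does: $\bar T=\bar S\bar R$ maps to an element of order $3$ in $\Gamma/N$, so $\bar T^3\in N$, and $\bar\Gamma(3)$ is the normal closure of $\bar T^3$.) With that sentence added, your proof is complete.
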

\begin{pf}
Let $G \subseteq \Gamma$ be of index $3$.\ Left multiplication of $\Gamma$ on  cosets of $G$ defines a morphism of groups
$\varphi: \Gamma\rightarrow S_3$ and the image has order $3$ or $6$.\ Let $K\df \ker\varphi$.\  Then $K\subseteq G$, and either $\ker\varphi=G$, or else
$\im\varphi \cong S_3$.\ In the first case  $G\unlhd \Gamma$, so $\Gamma/G\cong \ZZ_3$ and thus $\bar T^3 \in G$.\ But $\bar \Gamma(3)$ is the normal closure of $\bar T^3$ in $\Gamma$, so $G$ contains $\bar \Gamma(3)$. 

 In the second case $\Gamma/K\cong S_3$ and $G/K$ is a subgroup of order $2$.\ Now the image of $\bar T$ in $\Gamma/K$ has order 2 or 3, and if it
is 3 then as before $K\supseteq \bar\Gamma(3)$.\ However, $\bar\Gamma/\bar\Gamma(3) \cong A_4$, and this group does not have $S_3$
as a homomorphic image.\ This shows that $\bar T$ has order 2, so $K$ contains the normal closure of $\bar T^2$,
which is $\bar\Gamma(2)$.\ Because both $K$ and $\bar\Gamma(2)$ both have index $6$ in $\Gamma$, we deduce that
$K=\bar\Gamma(2)$.\ By Sylow's theorem, $G/K$ is conjugate to $\langle \bar T \rangle K/K =\bar\Gamma_0(2)/K$, so $G$ is
conjugate to $\bar\Gamma_0(2)$.
\end{pf}

Inducing one-dimensional representations from the normal subgroup of $\Gamma$ of index $3$ gives rise to congruence modular forms, and so we will ignore these representations in what follows.\ The representations of $\Gamma$ obtained from inducing characters of a non-normal subgroup $G$ of index three are 
(up to isomorphism) independent of the choice of $G$ because (Lemma \ref{lemindex3}) all such $G$ are conjugate.\ We thus focus on those representations induced from $\bar{\Gamma}_0(2)$. 

Note that $\bar{V}\in \bar{\Gamma}_0(2)$ has order $2$.\ It is well-known that $\bar{\Gamma}(2) = \langle \bar{T}^2, \bar{U}\rangle$ is a free group of 
rank $2$.\ Then $\bar{\Gamma}_0(2) = \langle \bar{T}, \bar{U}\rangle= \langle \bar{T}^2, \bar{U}\rangle \rtimes \langle \bar{V}\rangle$, and we have the relation
\[
\bar{T}\bar{U}\bar{T}^{-1}\bar{U}^{-1} = \bar{T}^2\bar{U}^2.
\]

We assert that
$N:=\bar{\Gamma}(2)'\langle \bar{T}^2\bar{U}^2\rangle= \bar{\Gamma}_0(2)'$.\ That $N\subseteq \bar{\Gamma}_0(2)'$ follows from the displayed relation.\
Observe that $N\unlhd \bar{\Gamma}(2)$.\ Moreover $\bar{V}^{-1}\bar{T}^2\bar{V}=\bar{U}\bar{T}^2\bar{U}^{-1}\equiv \bar{T}^2$ (mod $N$),
and similarly (because $\bar{V}$ is an involution) $\bar{V}^{-1}\bar{U}^2\bar{V}=\bar{T}\bar{U}^2\bar{T}^{-1}=(\bar{T}\bar{U}\bar{T}^{-1})^2
=(\bar{T}^2\bar{U}^3)^2\equiv \bar{U}^2$ (mod $N$).\ These calculations show not only that $N\unlhd \bar{\Gamma}_0(2)$, but also that
the quotient $\bar{\Gamma}_0(2)/N$ is \emph{abelian}.\ The equality $N=\bar{\Gamma}_0(2)'$ then follows.

From the identification of $\bar{\Gamma}_0(2)$, it is immediate that $\bar{\Gamma}_0(2)/\bar{\Gamma}_0(2)' \cong \ZZ\oplus (\ZZ/2\ZZ)$,
and that we can take the two summands to be generated by (the images of) $\bar{U}$ and $\bar{V}$ respectively.

Now suppose that $\chi: \bar{\Gamma}_0(2)\rightarrow \CC^\times$ is a character of \emph{finite order}.\ Since $\chi$ factors through the abelianization of
$\bar{\Gamma}_0(2)$, it follows from the preceding discussion that there exists a primitive $n$th root of unity $\lambda$ and a sign $\veps = \pm 1$ such that 
\begin{align}
\label{chidef}
\chi(\bar{U}) &= \lambda, & \chi(\bar{V})&=\veps.
\end{align}

We now consider the induced representation 
\[
\rho \df \Ind^{\Gamma}_{\bar{\Gamma}_0(2)}\chi.
\]
Essentially by definition,  the underlying
module $V$ furnishing $\rho$ is a direct sum 
\begin{eqnarray*}
V=V_0\oplus V_1\oplus V_2
\end{eqnarray*}
 of $1$-dimensional vector spaces $V_j$, where $V_0$ affords $\chi$ and the $V_j$ are permuted transitively by $\Gamma$.\ We fix this notation for the remainder of this section.

\begin{prop}
\label{thmrho1}
With respect to an ordered basis $v_0, v_1, v_2$ of $V$ with $v_j\in V_j$, and up to a possible reordering of $V_1$ and $V_2$, we have
\begin{align*}
\rho(\bar{R})&= \left(\begin{matrix} 0 & 1 & 0 \\0 & 0 & 1 \\ 1 & 0 & 0\end{matrix}\right),&
\rho(\bar{S})&= \veps\left(\begin{matrix} 0 & 0 & \lambda \\0 & 1 & 0 \\ \bar{\lambda} & 0 & 0\end{matrix}\right),& 
 \rho(\bar{T}) &= \veps\left(\begin{matrix} \lambda & 0 & 0 \\0 & 0 & 1 \\0 & \bar{\lambda} & 0\end{matrix}\right).
\end{align*}
In particular, the eigenvalues of  $\rho(\bar{T})$ are
$\{\veps\lambda,\ \pm \sigma\}$ for some $\sigma$ such that\ $\sigma^2=\bar{\lambda}$. 
\end{prop}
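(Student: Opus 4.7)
The strategy is to realize $\rho=\Ind^{\Gamma}_{H}\chi$ (with $H\df\bar\Gamma_0(2)$) concretely via left cosets. Since $\bar R$ has order $3$ in $\Gamma$ and does not lie in $H$, the three left cosets of $H$ in $\Gamma$ are $H$, $\bar RH$, $\bar R^{-1}H$, cyclically permuted by left multiplication by $\bar R$. I fix $v_0\in V_0$ spanning the $\chi$-isotypic line and declare $v_2\df\rho(\bar R)v_0$, $v_1\df\rho(\bar R^{-1})v_0$; the ``possible reordering of $V_1$ and $V_2$'' in the statement is precisely the freedom to make this labelling choice. In this basis, $\rho(\bar R)$ acts by the cyclic permutation $v_0\mapsto v_2\mapsto v_1\mapsto v_0$, which produces the asserted matrix at once.

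For $\rho(\bar T)$, I first note that $\bar T=\bar V\bar U$ in $H$ (from $V=TU^{-1}$), so $\chi(\bar T)=\veps\lambda$ and $\rho(\bar T)v_0=\veps\lambda\,v_0$. To obtain $\rho(\bar T)v_1$, combine the elementary identity $TR^{-1}=S$ with $\bar R=\bar S\bar T$ to conclude $\bar T\bar R^{-1}=\bar R\,\bar T^{-1}$; therefore
\[
\rho(\bar T)v_1=\rho(\bar T\bar R^{-1})v_0=\chi(\bar T^{-1})\,\rho(\bar R)v_0=\veps\bar\lambda\,v_2.
\]
For the last column, expand $\bar R\bar T\bar R=\bar S\bar T^2\bar S\bar T$ using $\bar R=\bar S\bar T$, and apply the standard conjugation identity $\bar S\bar T^2\bar S=\bar U^{-1}$ (a direct $2\times 2$ computation reflecting that $\bar S$ swaps upper- and lower-triangular parabolics) to get $\bar R\bar T\bar R=\bar U^{-1}\bar V\bar U\in H$. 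Since $\chi$ factors through the abelianization of $H$, $\chi(\bar R\bar T\bar R)=\chi(\bar V)=\veps$, and writing $\bar T\bar R=\bar R^{-1}(\bar R\bar T\bar R)$ yields $\rho(\bar T)v_2=\veps\,v_1$. Finally, $\bar S=\bar R\bar T^{-1}$ in $\Gamma$ forces $\rho(\bar S)=\rho(\bar R)\rho(\bar T)^{-1}$; multiplying the matrices already computed reproduces the stated form of $\rho(\bar S)$.

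The eigenvalue claim reads off the block structure of $\rho(\bar T)$: the $V_0$-component contributes $\veps\lambda$, while on $V_1\oplus V_2$ the action is by $\veps\stwomat 01{\bar\lambda}0$, whose characteristic polynomial is $x^2-\bar\lambda$, with roots $\pm\sigma$ where $\sigma^2=\bar\lambda$. The principal bookkeeping hurdle is the third column of $\rho(\bar T)$, which amounts to recognising that $\bar R\bar T\bar R$, viewed as an element of $H$, is congruent to $\bar V$ modulo $H'$; the identity $\bar S\bar T^2\bar S=\bar U^{-1}$ captures this cleanly and avoids any continued-fraction-style decomposition of a generic element of $H$.
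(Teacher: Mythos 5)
Your proof is correct: every identity you invoke checks out in $\PSL_2(\ZZ)$ ($TR^{-1}=S$, $\bar S\bar T=\bar R$, $\bar S\bar T^2\bar S=\bar U^{-1}$, hence $\bar T\bar R^{-1}=\bar R\bar T^{-1}$ and $\bar R\bar T\bar R=\bar U^{-1}\bar V\bar U\in\bar\Gamma_0(2)$), and the column-by-column computation of $\rho(\bar T)$, the product $\rho(\bar S)=\rho(\bar R)\rho(\bar T)^{-1}$, and the eigenvalue claim all come out as stated. The outline coincides with the paper's (a basis adapted to the lines $V_j$ in which $\rho(\bar R)$ is the $3$-cycle, the eigenvalue $\veps\lambda$ on $v_0$ from $\bar T=\bar V\bar U$, and $\rho(\bar S)$ obtained from the relation linking $\bar S,\bar T,\bar R$), but you pin down the two off-diagonal scalars of $\rho(\bar T)$ by a genuinely different mechanism. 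The paper leaves them as unknowns $u,v$, extracts $uv=\bar\lambda$ from $\bar R\bar T^2\bar R^{-1}=\bar U^{-1}$ (the eigenvalue of $\rho(\bar T^2)$ on $V_1$), and then imposes $\rho(\bar S)^2=I$ to solve for $u$ and $v$ individually; in your argument the relation $\bar S^2=1$ is never used as an input. Instead you compute each scalar directly from the induction cocycle, rewriting $\bar T\bar R^{-1}$ and $\bar T\bar R$ as a coset representative times an element of $\bar\Gamma_0(2)$ and evaluating $\chi$ there, via $\chi(\bar T^{-1})=\veps\bar\lambda$ and $\chi(\bar U^{-1}\bar V\bar U)=\chi(\bar V)=\veps$. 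Your route is slightly more self-contained in that the involutivity of $\rho(\bar S)$ becomes a consequence rather than a constraint, at the cost of the explicit word reduction $\bar R\bar T\bar R=\bar U^{-1}\bar V\bar U$; the paper trades that reduction for the quadratic relation. In both arguments the only normalization ambiguity is the swap of $V_1$ and $V_2$, exactly as the statement allows.
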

\begin{pf} 
Since $\Gamma$ permutes the $V_j$ transitively and the stabilizer of $V_0$ is $\bar{\Gamma}_0(2)$, then $\bar{\Gamma}(2)$ leaves each $V_j$ invariant and the quotient $\bar{\Gamma}/\bar{\Gamma}(2)\cong S_3$ induces every possible permutation of the $V_j$.

Since $\bar{R}$ has order $3$ we may choose notation so that  $\rho(\bar{R}): v_2 \mapsto v_1\mapsto v_0\mapsto v_2$, where $v_j$ spans $V_j$.\ Then
with respect to the ordered basis $v_0, v_1, v_2$, $\rho(\bar{R})$ is as indicated.\ Now $\rho(\bar{T})=\rho(\bar{V})\rho(\bar{U})$ has $v_0$ as eigenvector with eigenvalue $\veps\lambda$.\ Moreover,  $\rho(\bar{T})$ and interchanges $V_1$ and $V_2$.\ Therefore, we have
\[
 \rho(\bar{T}) = \left(\begin{matrix} \veps\lambda & 0 & 0 \\0 & 0 & u \\0 & v & 0\end{matrix}\right)
\]
for scalars $u, v$.\ We assert that $uv=\bar{\lambda}$.\ To see this, use the relation
$\bar{R}\bar{T}^2\bar{R}^{-1}= \bar{U}^{-1}$
to obtain
\begin{align*}
\bar{T}^2V_1 &=\bar{T}^2\bar{R}^{-1}V_0 = \bar{R}^{-1}\bar{U}^{-1}V_0= \bar{\lambda}V_1.
\end{align*}
This says that the eigenvalue of $\rho(\bar{T}^2)$ on $V_1$ is $\bar{\lambda}$, and our assertion follows.\
Since $\bar{S}\bar{R}=\bar{T}$, we find that
\[
\rho(\bar{S})=\rho(\bar{T})\rho(\bar{R})^{-1} = \left(\begin{matrix} \veps\lambda & 0 & 0 \\0 & 0 & u \\0 & v & 0\end{matrix}\right)
\left(\begin{matrix}0 & 0 & 1 \\ 1 & 0 & 0 \\  0 & 1 & 0\end{matrix}\right)= \left(\begin{matrix} 0 & 0 & \veps\lambda \\0 & u & 0 \\ v & 0 & 0\end{matrix}\right),
\]
so that
\[
\rho(\bar{S}^2)=\left(\begin{matrix} \veps v\lambda & 0 & 0 \\0 & u^2 & 0 \\0 & 0 & \veps v\lambda\end{matrix}\right)=I.
\]
Since $uv=\bar{\lambda}$, we see that $u=\veps$, and the matrices representing $\bar{T}$ and $\bar{S}$ are then as in the statement of the Proposition.
\end{pf}

\begin{lem}
\label{lemmases} There is a short exact sequence
\[
1 \rightarrow A \rightarrow \rho(\bar{\Gamma}) \rightarrow S_3\rightarrow 1,
\]
where $A\df \rho(\bar{\Gamma}(2)) = \langle \rho(\bar{T^2}), \rho(\bar{U})\rangle$, and exactly one of the following holds:
\begin{enumerate}
\item[(i)] $\gcd(n, 3)=1$ and  $A \cong \ZZ_n\times \ZZ_n$;
\item[(ii)] $3{\mid}n$ and $A\cong \ZZ_n \times \ZZ_{n/3}$.
\end{enumerate}
\end{lem}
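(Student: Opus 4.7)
The plan has three steps. First, set up the exact sequence by recognizing that $\rho$ records the permutation action of $\bar{\Gamma}$ on the three subspaces $V_j$. Second, make $\rho(\bar{T}^2)$ and $\rho(\bar{U})$ completely explicit as diagonal matrices. Third, analyze the subgroup of the diagonal torus that they generate by a short Smith normal form computation.

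For the exact sequence: the permutation action of $\bar{\Gamma}$ on $\{V_0, V_1, V_2\}$ defines a surjection $\pi \colon \bar{\Gamma} \twoheadrightarrow S_3$ whose kernel is exactly $\bar{\Gamma}(2)$; this is essentially already established in the proof of Lemma~\ref{lemindex3}, where it was shown that $\bar{\Gamma}(2) \subseteq \bar{\Gamma}_0(2)$ is normal in $\bar{\Gamma}$ with quotient isomorphic to $S_3$. Since this $S_3$-action is visible from $\rho$, the map $\pi$ factors as $\bar{\Gamma} \xrightarrow{\rho} \rho(\bar{\Gamma}) \to S_3$; the right-hand arrow is surjective with kernel $\rho(\bar{\Gamma}(2)) = A$, giving the stated exact sequence. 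Combined with the well-known identification $\bar{\Gamma}(2) = \langle \bar{T}^2, \bar{U}\rangle$, this immediately yields $A = \langle \rho(\bar{T}^2), \rho(\bar{U})\rangle$.

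Next I would make the generators of $A$ completely explicit. Since $\bar{T}^2$ and $\bar{U}$ both lie in $\bar{\Gamma}(2)$, they preserve each $V_j$, so $\rho(\bar{T}^2)$ and $\rho(\bar{U})$ are diagonal in the basis $v_0, v_1, v_2$. Squaring the matrix for $\rho(\bar{T})$ from Proposition~\ref{thmrho1} immediately gives $\rho(\bar{T}^2) = \diag(\lambda^2, \bar{\lambda}, \bar{\lambda})$. For $\rho(\bar{U})$, the cleanest route is the relation $\bar{R}\bar{T}^2\bar{R}^{-1} = \bar{U}^{-1}$, a quick matrix check in $\SL_2(\ZZ)$. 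Conjugation by $\rho(\bar{R})$ (which is a cyclic permutation matrix) merely shifts the diagonal entries cyclically, producing $\rho(\bar{U})^{-1} = \diag(\bar{\lambda}, \bar{\lambda}, \lambda^2)$, and hence $\rho(\bar{U}) = \diag(\lambda, \lambda, \bar{\lambda}^2)$.

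Finally, the structure of $A$ reduces to linear algebra. Every element of $A$ is a diagonal matrix with entries in $\mu_n$ of determinant $1$, so projection onto the first two diagonal entries embeds $A$ into $\mu_n \times \mu_n \cong \ZZ_n \times \ZZ_n$. Under this identification $\rho(\bar{T}^2)$ and $\rho(\bar{U})$ correspond to the vectors $(2, -1)$ and $(1, 1)$, so $A$ is the image in $(\ZZ_n)^2$ of the integer matrix $M = \stwomat{2}{1}{-1}{1}$. Since $\det M = 3$ and the Smith normal form of $M$ is $\diag(1, 3)$, the image is isomorphic to $\ZZ_n \times \ZZ_{n/\gcd(n, 3)}$, giving exactly the dichotomy (i)/(ii). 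The only point that requires any care is verifying the relation $\bar{R}\bar{T}^2\bar{R}^{-1} = \bar{U}^{-1}$ so that the eigenvalues of $\rho(\bar{U})$ on $V_1$ and $V_2$ come out right; once that is in hand, the remaining steps are routine.
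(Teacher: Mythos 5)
Your proof is correct and takes essentially the same route as the paper: both obtain $\rho(\bar{T}^2)=\diag(\lambda^2,\bar{\lambda},\bar{\lambda})$ and $\rho(\bar{U})=\diag(\lambda,\lambda,\bar{\lambda}^2)$ from Proposition \ref{thmrho1} via the relation $\bar{R}\bar{T}^2\bar{R}^{-1}=\bar{U}^{-1}$, and read off the structure of $A$ from these diagonal matrices. The paper simply asserts that the remaining verifications ``can be checked directly,'' and your identification of the $S_3$-quotient together with the Smith normal form computation supplies exactly those omitted details, correctly.
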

\begin{pf} From Lemma \ref{thmrho1} we obtain 
\begin{align*}
 \rho(\bar{T^2}) &= \left(\begin{matrix} \lambda^2 & 0 & 0 \\0 & \bar{\lambda} &0  \\ 0& 0& \bar{\lambda} \end{matrix}\right), &
  \rho(\bar{U}) &= \rho(\bar{R}\bar{T}^{-2}\bar{R}^{-1})=\left(\begin{matrix} \lambda & 0 & 0 \\0 & \lambda &0  \\ 0& 0& \bar{\lambda}^2 \end{matrix}\right).
\end{align*}
Now all parts of the Lemma can be checked directly.
\end{pf}

\begin{lem}\label{lemred} 
The following statements are equivalent:
\begin{enumerate}
\item[(i)] $n{\mid}3$,
\item[(ii)] $\rho$ is \emph{not} irreducible,
\item[(iii)] $\rho(\bar{T})$ has repeated eigenvalues.
\end{enumerate}
\end{lem}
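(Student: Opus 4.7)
My plan is to separate the equivalence into two pieces: $(i)\Leftrightarrow(iii)$, which follows immediately from Proposition \ref{thmrho1}, and $(i)\Leftrightarrow(ii)$, which I would handle by exploiting the diagonal action of $A=\rho(\bar{\Gamma}(2))$ on the basis $v_{0},v_{1},v_{2}$ furnished by the inducing construction.

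For $(iii)\Leftrightarrow(i)$: Proposition \ref{thmrho1} gives the eigenvalues of $\rho(\bar{T})$ as $\veps\lambda,\pm\sigma$ with $\sigma^{2}=\bar{\lambda}\neq 0$, so $\sigma\neq-\sigma$. A repeated eigenvalue therefore forces $\veps\lambda=\pm\sigma$, and squaring reduces this to $\lambda^{2}=\bar{\lambda}$, i.e.\ $\lambda^{3}=1$, equivalently $n\mid 3$; the converse is immediate.

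For $(i)\Leftrightarrow(ii)$, I would first tabulate the character $\psi_{j}$ by which $A$ acts on the line $V_{j}$, by reading off the diagonal entries of $\rho(\bar{T}^{2})$ and $\rho(\bar{U})$ exhibited in the proof of Lemma \ref{lemmases}. A short calculation shows that any pairwise equality $\psi_{i}=\psi_{j}$ collapses to $\lambda^{3}=1$; consequently, the three characters are pairwise distinct when $n\nmid 3$, and all three coincide when $n\mid 3$.

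If $n\nmid 3$, then any $\rho$-invariant subspace $W\subseteq V$ is also $A$-invariant, and distinctness of the $\psi_{j}$ forces $W$ to be a direct sum of some subcollection of $V_{0},V_{1},V_{2}$; since $\rho(\bar{R})$ cyclically permutes these three lines, only $\{0\}$ and $V$ qualify, so $\rho$ is irreducible. If $n\mid 3$, then $A$ acts on $V$ by a single scalar character, so any line which is simultaneously an eigenspace for $\rho(\bar{R})$ and $\rho(\bar{T})$ is automatically $\rho$-invariant, using $\Gamma=\langle\bar{R},\bar{T}\rangle$ (as $\bar{S}=\bar{R}\bar{T}^{-1}$). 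I expect $w\df v_{0}+\lambda v_{1}+\lambda^{2}v_{2}$ to be such a common eigenvector, with $\rho(\bar{R})w=\lambda w$ and $\rho(\bar{T})w=\veps\lambda w$, and a short check using the matrices of Proposition \ref{thmrho1} will confirm this, giving reducibility. No step is genuinely hard; the main conceptual content is recognizing that imprimitivity forces the $A$-isotypic decomposition of $V$ to dictate which $\rho$-invariant subspaces can exist.
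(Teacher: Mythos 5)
Your proposal is correct, and the interesting difference lies in how you handle the equivalence with irreducibility. The eigenvalue computation for (i)$\Leftrightarrow$(iii) is the same as the paper's, and your reducibility witness for $n\mid 3$ is in fact the paper's: since $\lambda^3=1$ gives $\bar{\lambda}=\lambda^2$, your vector $v_0+\lambda v_1+\lambda^2 v_2$ is exactly the span of $\left(\begin{smallmatrix}1\\ \lambda\\ \bar{\lambda}\end{smallmatrix}\right)$ used in the paper, and your claimed eigenvector relations $\rho(\bar{R})w=\lambda w$, $\rho(\bar{T})w=\veps\lambda w$ do check out against Proposition \ref{thmrho1} (and $\bar{R},\bar{T}$ do generate $\Gamma$, since $\bar{S}=\bar{T}\bar{R}^{-1}$, with $\bar R\bar T^{-1}=\bar S$ also holding in $\PSL_2(\ZZ)$). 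Where you diverge is the irreducibility direction: the paper argues (ii)$\Rightarrow$(iii) by using complete reducibility to produce an invariant \emph{line}, noting that distinct $\rho(\bar{T})$-eigenvalues force that line to be one of the three explicit $\rho(\bar{T})$-eigenlines, and then checking from the matrix of $\rho(\bar{S})$ that none of these is invariant. You instead restrict to $A=\rho(\bar{\Gamma}(2))$: the characters $\psi_0,\psi_1,\psi_2$ on $V_0,V_1,V_2$ (read off $\rho(\bar{T}^2)=\diag(\lambda^2,\bar\lambda,\bar\lambda)$ and $\rho(\bar{U})=\diag(\lambda,\lambda,\bar\lambda^2)$) are pairwise distinct precisely when $n\nmid 3$, so any $\rho$-invariant subspace is a sum of some of the lines $V_j$, and the cyclic permutation by $\rho(\bar{R})$ rules out proper nonzero sums. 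This is the standard imprimitivity/Mackey-style criterion for induced representations; it buys you an argument that needs neither complete reducibility nor any case analysis on the dimension of the invariant subspace, and it makes the role of the inducing structure transparent, at the modest cost of quoting (or proving via the usual isotypic-projection argument) that invariance under the diagonal abelian group $A$ with distinct characters forces the coordinate decomposition. The paper's route is more computational but ties (ii) directly to (iii) and stays entirely within the explicit matrices. Both proofs are complete and correct.
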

\begin{pf} 
By Lemma \ref{thmrho1}, $\rho(\bar{T})$ has a repeated eigenvalue if, and only if, $\veps\lambda = \pm \sigma$.\ Because $\sigma^2 = \bar{\lambda}$
(cf.\ Lemma \ref{thmrho1}) this is equivalent to $\lambda^3=1$, so (i) and (iii) are equivalent.

Now we show that (ii) and (iii) are also equivalent.\ If $\rho$ is \emph{not} irreducible then its completely reducibility (which holds because $\rho(\Gamma)$ is finite) means that $\rho(\Gamma)$ leaves invariant some 1-dimensional subspace of $V$.\ If $\rho(\bar{T})$ has \emph{distinct} eigenvalues then such
an
invariant subspace is necessarily one of the 3 eigenspaces for $\rho(\bar T)$.\ These are spanned by
\[
\left(\begin{smallmatrix}1 \\0 \\0\end{smallmatrix}\right),\quad \left(\begin{smallmatrix}0 \\ 1 \\ \sigma\end{smallmatrix}\right),\quad \left(\begin{smallmatrix}0 \\ 1 \\ -\sigma\end{smallmatrix}\right),
\]
where $\sigma^2 = \bar\lambda$. However, from the explicit nature of the matrix $\rho(\bar S)$ given in Proposition \ref{thmrho1}, we see that \emph{none} of these three eigenspaces are invariant under $\rho(\bar S)$, and this is a contradiction.\ Thus we have established the implication (ii) $\Rightarrow$ (iii).

Conversely, if (iii) holds then we know that $\lambda^3=1$.\ Then we check directly that $\rho(\bar{S})$ and $\rho(\bar{T})$ each leave
the span of $\left(\begin{smallmatrix}1 \\ \lambda \\ \bar{\lambda}\end{smallmatrix}\right)$ invariant, in which case (ii) holds.\
This completes the proof of the Lemma.
\end{pf}

Our next theorem determines when $\rho$ factors through a congruence quotient of $\Gamma$.\ This will be important for recognizing when vvmfs are themselves congruence.
\begin{thm}\label{thmKlvl1} 
Let $\rho$ be a representation of $\Gamma$ that is induced from a one-dimensional representation $\chi$ of $\bar\Gamma_0(2)$ with finite image. Let $n$ be such that $\chi(\bar U)$ is a primitive $n$th root of unity. Then $\ker \rho$ is a congruence subgroup if, and only if, $n{\mid}24$. 
\end{thm}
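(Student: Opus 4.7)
The plan is to reduce the question, via Wohlfahrt's theorem, to a computation about the abelianization of a finite quotient of $\bar\Gamma(2)$, and then to carry out this computation prime by prime. First, since $\rho=\Ind_{\bar\Gamma_0(2)}^\Gamma\chi$, the subgroup $\ker\rho$ is the normal core of $\ker\chi$ in $\Gamma$, i.e.\ the intersection of the three $\Gamma$-conjugates of $\ker\chi$. A finite intersection of congruence subgroups is again congruence, so $\ker\rho$ is congruence iff $\ker\chi$ is. It therefore suffices to prove that $\ker\chi$ is congruence if and only if $n\mid 24$.

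To apply Wohlfahrt, I would compute the general level of $\ker\chi$ viewed as a subgroup of $\Gamma$. The group $\bar\Gamma_0(2)$ has cusps $\infty$ and $0$ of $\Gamma$-widths $1$ and $2$ respectively, with stabilizers generated by $\bar T$ and $\bar U$. Hence the $\Gamma$-widths of the cusps of $\ker\chi$ above these are $\ord\chi(\bar T)$ and $2n$; a brief case analysis on $\veps$ (using that $\chi(\bar T)=\veps\lambda$) shows their least common multiple is always $2n$. Wohlfahrt then gives: $\ker\chi$ is congruence iff $\bar\Gamma(2n)\subseteq\ker\chi$. Since $\bar\Gamma(2n)\subseteq\bar\Gamma(2)$, this is equivalent to the restriction $\chi|_{\bar\Gamma(2)}$ descending to the finite quotient $Q_n\df\bar\Gamma(2)/\bar\Gamma(2n)$, and because $\chi|_{\bar\Gamma(2)}$ is abelian with $\chi(\bar U)$ of order $n$, this holds iff the image of $\bar U$ in $Q_n^{\mathrm{ab}}$ has order at least $n$.

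The main obstacle is the computation of $Q_n^{\mathrm{ab}}$. By the Chinese remainder theorem, $Q_n$ decomposes as a product of local factors indexed by the prime divisors of $2n$, which I would analyze using three classical group-theoretic facts: (i) for every prime $p\ge 5$ and every $k\ge 1$, $\PSL_2(\ZZ/p^k)$ is a perfect group; (ii) the abelianization of $\PSL_2(\ZZ/3^k)$ is cyclic of order $3$ for every $k\ge 1$; and (iii) the abelianization of $\ker\bigl(\PSL_2(\ZZ/2^{a+1})\to\PSL_2(\ZZ/2)\bigr)$ has exponent dividing $8$. These force, respectively, no prime $\ge 5$ to divide $n$, the $3$-adic valuation of $n$ to be at most $1$, and the $2$-adic valuation of $n$ to be at most $3$, so that $n\mid 24$. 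Conversely, for each $n\mid 24$ one realizes $\chi_{n,\veps}$ explicitly as the multiplier system of a suitable eta quotient $\eta(\tau)^a\eta(2\tau)^b$ on $\bar\Gamma_0(2)$; since eta quotients are congruence modular forms, the associated multiplier is a congruence character, and hence $\ker\chi$ (and with it $\ker\rho$) is a congruence subgroup.
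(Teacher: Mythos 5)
Your overall architecture is sound and genuinely different from the paper's: reducing $\ker\rho$ to $\ker\chi$ via the normal core, invoking Wohlfahrt to pin the level at $2n$, and then testing whether $\chi|_{\bar\Gamma(2)}$ factors through $\bar\Gamma(2)/\bar\Gamma(2n)$ prime by prime is a clean plan, and your facts (i) and (ii) are indeed standard (they amount to $\SL_2(\ZZ/p^k)^{\mathrm{ab}}\cong \ZZ/\gcd(p^k,12)$). The difficulty is your fact (iii). This is not a classical, citable statement: it concerns the abelianization of the congruence kernel $\ker\bigl(\SL_2(\ZZ/2^{a+1})\to\SL_2(\ZZ/2)\bigr)$, not of the full group $\SL_2(\ZZ/2^{a+1})$, and it is exactly the $2$-primary phenomenon that constitutes the hard part of the theorem. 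The paper's proof spends essentially all of its effort here: an explicit commutator computation with $C=[T^2,U]$ modulo $16$ to show $\bar\Gamma(2)/\bar\Gamma(2)'\bar\Gamma(16)\cong\ZZ_8\times\ZZ_8$, and a separate fixed-point argument for $\bar R$ at level $32$ to rule out $16\mid n$. Moreover (iii) is not accessible by the obvious graded argument: in the congruence filtration of $\ker(\SL_2(\ZZ_2)\to\SL_2(\FF_2))$ the leading term of any commutator lies in the scalar line of $\mathfrak{sl}_2(\FF_2)$, so exponent bounds do not fall out of the Lie-algebra picture and one must exhibit explicit relations such as $T^{16}\equiv C^4\,[C,T^2]\pmod{32}$ (and its analogue for $U$, plus a stabilization argument for higher $2$-powers) to see that $T^{16}$ and $U^8$ really do lie in the commutator subgroup at every level. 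The statement you assert is in fact true, but asserting it without proof or reference leaves the crux of the theorem unproved; supplying a proof of (iii) is comparable in length and delicacy to the paper's own argument.

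Two smaller points. First, your "iff the image of $\bar U$ in $Q_n^{\mathrm{ab}}$ has order at least $n$" is only correct in the direction you actually use (if the order is too small, $\chi$ cannot factor); the converse would require checking that the specific character $\bar T^2\mapsto\lambda^2$, $\bar U\mapsto\lambda$ is compatible with all relations in $Q_n^{\mathrm{ab}}$, so you should state it one-sidedly. Second, in the converse direction, the weight-zero eta quotients on $\bar\Gamma_0(2)$ are just the powers of $\eta(\tau)/\eta(2\tau)$, whose multipliers form a cyclic group of order dividing $24$; these cannot realize all $48$ characters $\chi_{n,\veps}$ with $n\mid 24$ (the two values of $\veps$ for a given $\lambda$ cannot both occur as powers of one character). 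This is easily repaired — e.g.\ twist by $\chi_{1,-1}$, whose kernel contains $\bar\Gamma(2)$ and is therefore congruence, or argue as the paper does by identifying $K_{24}=\bar\Gamma(2)'\bar\Gamma(48)$ — but as written the converse is also incomplete.
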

\begin{pf} In the following proof, we repeatedly use the fact that if $G\subseteq \Gamma$ is a congruence subgroup, then for a positive integer $N$,
$\bar{\Gamma}(N)\subseteq G$ if, and only if, $\bar{T}^N\in G$.\ 

Set $K\df \ker\rho$.\ We assume until further notice that
 $K$ is a congruence subgroup.\ From Lemma \ref{thmrho1} it follows that $\rho(\bar{T})$ has order $2n$.\ Thus $\bar{T}^{2n}\in K$, and because $K$ is assumed to be congruence then  $\bar{\Gamma}(2n)\subseteq K$, and $K$ has level \emph{exactly} $2n$.\ If $n$ is divisible by a prime $p\geq 5$, it follows that 
$\Gamma/\bar\Gamma(2n)$ has a quotient  $\Gamma/\bar\Gamma(p)\cong \PSL_2(p)$, which is nonsolvable.\ On the other hand, this same group must also be a quotient of $\rho(\Gamma)$, which is solvable by Lemma \ref{lemmases}, contradiction.\ So $n$ is divisible only by the primes $2$ and $3$.

Suppose next that $16{\mid}n$.\ We shall derive a contradiction.\
Let $M$ be the unique normal subgroup of $\bar{\Gamma}(2)$ such that $\bar{\Gamma}(2)/M\cong\ZZ_{16}^2$.\ From the description of
$A$ in Lemma \ref{lemmases} we see that $K\subseteq M$.\ As $K$ is congruence and $\bar{T}^{32}\in M$, we have $\bar\Gamma(32)\subseteq M$.\ In effect,  this reduces us to deriving a contradiction if $n=16$ and $K=M$.\  Indeed, consider
 the tower of groups
\[
N\df\bar{\Gamma}(2) \supseteq \bar\Gamma(4) \supseteq \bar\Gamma(8) \supseteq \bar\Gamma(16)\supseteq \bar\Gamma(32)\fd N_1
\]
Note that $N/\bar\Gamma(4)\cong \ZZ_2^2$ and the other quotients satisfy $\bar\Gamma(2^f)/ \bar\Gamma(2^{f+1})\cong \ZZ_2^3$.\ The element $\bar{R}$ acts on each of these latter quotients by conjugation, with fixed-point subgroup of order $2$.\ So we have $\abs{N/N_1}=2^{11}$ and $\abs{C_{N/N_1}(\bar{R})}=2^{3}$.\ On the other hand, $\bar{R}$ acts without fixed-points on $N/M$, and $N_1\subseteq M$.\ We conclude that $K/N_1 = C_{N/N_1}(\bar{R})\unlhd N/N_1$ where $C_{N/N_1}(\bar{R})$ denotes the fixed points of $\bar{R}$ acting on $N/N_1$.\ However, one can calculate explicitly that this is \emph{false}, and this is the desired contradiction.

Assume that $9{\mid}n$.\ By Lemma \ref{lemmases} we can find a subgroup $M$ such that $K\subseteq M \subseteq \bar{\Gamma}(2)$ and $\bar{\Gamma}(2)/M\cong \ZZ_3^2$.\ Then $\bar T^6\in M$, whence also $M\supseteq \bar \Gamma(6)$.\
But then $\bar{\Gamma}(2)/M$ is a quotient of $\bar{\Gamma}(2)/\bar\Gamma(6)$ of order $9$, a contradiction because $|\bar{\Gamma}(2)/\bar\Gamma(6)|=12$.\ This completes the proof of the statement  $K$ congruence $\Rightarrow$ $n{\mid}24$.\ It remains to prove the converse. 

Writing $K_n$ to indicate the dependence of $K$ on $n$, it follows from Lemma \ref{lemmases} that $K_n\subseteq K_m$ if $m{\mid}n$.\ Therefore, it suffices to assume that $n=24$ and show that $K=K_{24}$ is congruence.\ We will actually show that
\begin{eqnarray}\label{K24identity}
K_{24}= \bar{\Gamma}(2)'\bar{\Gamma}(48).
\end{eqnarray}

By Lemma \ref{lemmases} we have $\bar{\Gamma}(2)/K_{24}\cong \ZZ_{24}\times \ZZ_8$, and in particular, there is a \emph{unique} subgroup $M$ satisfying
$K\subseteq M\subseteq \bar{\Gamma}(2)$ and $|\bar{\Gamma}(2)/M|=3$.\  The unicity of $M$ ensures that $M \unlhd \Gamma$, and one readily deduces
 that $\bar{\Gamma}(6)\subseteq M$.\ Since $\bar{\Gamma}(2)/\bar{\Gamma}(6)\cong \PSL_2(3)\cong A_4$, then 
 $M/\bar{\Gamma}(6)=(\bar{\Gamma}(2)/\bar{\Gamma}(6))'$, i.e.,
 $M=\bar{\Gamma}(6)\bar{\Gamma}(2)'$.
 
 Since $M/\bar{\Gamma}(2)'$ is a free abelian group of rank $2$
 then $K/\bar{\Gamma}(2)'$ is the \emph{unique} subgroup with quotient $\ZZ_8\times \ZZ_8$.\ Now $\bar{\Gamma}(48)\bar{\Gamma}(2)'
 \subseteq \bar{\Gamma}(6)\bar{\Gamma}(2)'\subseteq M$, so if we can also show that $M/\bar{\Gamma}(48)\bar{\Gamma}(2)'\cong \ZZ_8\times\ZZ_8$, then (\ref{K24identity}) will follow.\ From the last paragraph, this is the same as showing that 
 $\bar{\Gamma}(6)\bar{\Gamma}(2)'/\bar{\Gamma}(48)\bar{\Gamma}(2)'\cong \ZZ_8\times\ZZ_8$.
 
 We interpolate a general Lemma.
\begin{lem}
\label{lemmagen} 
Suppose that $A$, $B$, $C$ are normal subgroups of a group $G$ with $B\subseteq A$.\ There is a commuting diagram
\[\xymatrix{
1\ar[r]&\dfrac{(B\cap C)(C\cap A')}{B\cap C} \ar[r]\ar[d]_{f_1}&\dfrac{A\cap C}{B\cap C}\ar[r]\ar[d]_{f_2} &\dfrac{(A\cap C)A'}{(B\cap C)A'}\ar[r]& 1\\
1\ar[r]& \dfrac{BA'}{B}\ar[r]&\dfrac{A}{B}\ar[r] & \dfrac{A}{BA'}\ar[r] & 1
}\]
where the horizontal rows are short exact, $f_2$ is injective, and the restriction $f_1$ of $f_2$ to the indicated subgroup is also injective.\ In
particular, if $f_2$ is an \emph{isomorphism} it induces an isomorphism of short exact sequences, and especially an isomorphism
\begin{equation}
\label{iso}
\frac{(A\cap C)A'}{(B\cap C)A'} \stackrel{\cong}{\rightarrow} \frac{A}{BA'}.
\end{equation}
\end{lem}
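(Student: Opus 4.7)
The plan is to verify the diagram piece by piece: well-definedness and commutativity of the two squares, then exactness of the two rows, then injectivity of $f_2$ (which forces injectivity of $f_1$), and finally the isomorphism statement.

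Because $A, B, C$ are normal in $G$, so are $A'$ and every intersection and product appearing in the diagram; hence every quotient is a bona fide group, and $f_1, f_2$ are the natural inclusion-induced maps, making both squares commute tautologically. The bottom row is the standard sequence $1\to BA'/B\to A/B\to A/BA'\to 1$. For the top row, the critical identity is
\[
(A\cap C)\cap (B\cap C)A' \;=\; (B\cap C)(C\cap A'),
\]
which follows from Dedekind's modular law (using $B\cap C\subseteq A\cap C$) together with $(A\cap C)\cap A' = C\cap A'$. Injectivity of $f_2$ is immediate since $B\subseteq A$ gives $(A\cap C)\cap B = B\cap C$, and $f_1$ is a restriction of $f_2$, so it is also injective.

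The main obstacle is the final claim that $f_2$ iso promotes to an isomorphism of the whole short exact sequence (and in particular gives \eqref{iso}). The essential input is that the hypotheses $A = (A\cap C)B$ and $(A\cap C)\cap B = B\cap C$, combined with the containment $[A\cap C, B]\subseteq (A\cap C)\cap B = B\cap C$ (immediate from normality of $A\cap C$ and $B$ in $G$), upgrade to an internal direct product decomposition
\[
A/(B\cap C)\;\cong\;\bigl((A\cap C)/(B\cap C)\bigr)\,\times\,\bigl(B/(B\cap C)\bigr).
\]
Because derived subgroups split under direct products, the images of $A\cap C$, $B$, and $A'$ in the right-hand factor are respectively $M_1\times\{1\}$, $\{1\}\times N$, and $M_1'\times N'$, writing $M_1, N$ for the two factors. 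A short calculation then pulls back to the critical equalities
\[
(A\cap C)\cap BA' \;=\; (A\cap C)'(B\cap C) \;=\; (B\cap C)(C\cap A'),
\]
where the first equality comes from intersecting $M_1\times\{1\}$ with the image $M_1'\times N$ of $BA'$, and the second from the symmetric observation that every element of $C\cap A'$ has trivial $N$-coordinate and lies in the full derived subgroup. This identifies the kernel of the composition $(A\cap C)/(B\cap C)\to A/BA'$ with $(B\cap C)(C\cap A')/(B\cap C)$, so that $f_3$ is an isomorphism; a parallel bookkeeping of the $N$-factor gives the isomorphism for $f_1$ and completes the diagram chase.
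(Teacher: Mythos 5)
Your proof is correct, and the routine parts (the kernel computation for the top row via Dedekind's modular law, injectivity of $f_2$ and hence of $f_1$, commutativity) match the paper's. Where you genuinely diverge is the final step. The paper disposes of it in two lines: the codomain of $f_1$ is $BA'/B=(A/B)'$, while the top-right term $(A\cap C)A'/(B\cap C)A'$ is abelian, so the derived subgroup of $(A\cap C)/(B\cap C)$ lies in the kernel of the top row's quotient map, i.e.\ in the domain of $f_1$; when $f_2$ is an isomorphism that derived subgroup maps onto $(A/B)'$, forcing $f_1$ to be surjective, hence an isomorphism, and the isomorphism \eqref{iso} on cokernels follows. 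You instead exploit surjectivity of $f_2$ in the form $A=(A\cap C)B$, together with $[A\cap C,B]\subseteq B\cap C$ from normality in $G$, to get the internal direct product $A/(B\cap C)\cong \bigl((A\cap C)/(B\cap C)\bigr)\times\bigl(B/(B\cap C)\bigr)$, and then track derived subgroups through the factors (all your correspondence-theorem steps are legitimate since the relevant subgroups contain the kernel $B\cap C$). This is heavier machinery for the same conclusion, but it buys something the paper's argument does not state explicitly: the identities $(A\cap C)\cap BA'=(A\cap C)'(B\cap C)=(B\cap C)(C\cap A')$ and $BA'=B(C\cap A')$, which pin down exactly which subgroup is the kernel of the composite $(A\cap C)/(B\cap C)\to A/BA'$ and let you establish the right-hand vertical isomorphism directly (after which $f_1$ follows by your bookkeeping, or simply by the five lemma). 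The paper's route is shorter and purely formal; yours is more structural and yields explicit subgroup equalities as a by-product. Either way the lemma is proved, so there is no gap -- only a less economical path to the key surjectivity.
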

\begin{pf} The lower short exact sequence is canonical.\ As for the upper sequence, 
the map $a(B\cap C) \mapsto a(B\cap C)A'\ (a\in A\cap C)$ induces a surjective morphism of groups $(A\cap C)/(B\cap C)\rightarrow (A\cap C)A'/(B\cap C)A'$ with kernel $(A\cap C)\cap (B\cap C)A'/(B\cap C)=(B\cap C)(A\cap C\cap A')/(B\cap C)=(B\cap C)(C\cap A')/(B\cap C)$, so that
the upper sequence is also short exact.

Next, the map $a\mapsto aB\ (a\in A\cap C)$ induces a group morphism $A\cap C\rightarrow A/B$ with kernel
$A\cap C\cap B=B\cap C$.\ Thus the middle vertical arrow $f_2$, defined by $f_2(a(B\cap C))=aB$, is injective, and
 we easily check that $f_1$, which is the restriction of $f_2$
 to $(B\cap C)(C\cap A')/(B\cap C)$, is also injective.\ 
 
 It is clear that the diagram commutes, so suppose now that $f_2$ is an isomorphism.\ Since $BA'/B = (A/B)'$ and $(A\cap C)A'/(B\cap C)A'$
 is \emph{abelian} then $f_1$ is necessarily \emph{surjective}.\ Since it is also injective it is therefore an isomorphism, and the final assertions
 of the Lemma follow immediately.
 \end{pf}
 
 We apply the Lemma with $A$, $B$, $C$ equal to $\bar{\Gamma}(2)$, $\bar{\Gamma}(16)$, $\bar{\Gamma}(3)$ respectively.\ Because
 $A\cap C=\bar{\Gamma}(6)$ and $B\cap C=\bar{\Gamma}(48)$, (\ref{iso}) then reads
 \begin{eqnarray*}
\frac{\bar{\Gamma}(6)\bar{\Gamma}(2)'}{\bar{\Gamma}(48)\bar{\Gamma}(2)'} \cong \frac{\bar{\Gamma}(2)}{\bar{\Gamma}(16)\bar{\Gamma}(2)'}.
\end{eqnarray*}
In order to complete the proof of the Theorem, it is therefore sufficient show that $\bar{\Gamma}(2)/\bar{\Gamma}(16)\bar{\Gamma}(2)'\cong \ZZ_8\times\ZZ_8$. (In effect, we have reduced the proof to the case $n=8$.)

Let $G\df \bar{\Gamma}(2)/\bar{\Gamma}(16) =\langle \bar{T}^2\bar{\Gamma}(16),\bar{U}\bar{\Gamma}(16) \rangle$.\ 
We have $\abs{G}=2^8$.\ We calculate that 
\[
C\df[T^2, U] = \twomat {21}{-8}8{-3},
\]
that the image of $\bar{C}$ in $G$ has order $4$, and that $[C, T^2] \equiv [C, U] \equiv I\pmod{16}$. Thus $\langle \bar{C}\bar{\Gamma}(16)\rangle = G'$ has order $4$, so that $G/G' \cong \bar{\Gamma}(2)/\bar{\Gamma}(2)'\bar{\Gamma}(16)$
is abelian with 2 generators, exponent $8$ and order $2^6$.\ Therefore
 $\bar{\Gamma}(2)/ \bar{\Gamma}(2)'\bar{\Gamma}(16)\cong \ZZ_8\times\ZZ_8$, and the proof of Theorem \ref{thmKlvl1} is complete.
 \end{pf}
 
\section{Geometric considerations}
\label{s:geometry}
We retain previous notation, in particular, $\chi{:} \bar{\Gamma}_0(2)\rightarrow \CC^\times$ is the linear character described in (\ref{chidef}), and we put $\rho= \Ind_{\bar{\Gamma}_0(2)}^{\Gamma}\chi$, $K=\ker\rho$ and $A= \bar{\Gamma}(2)/K$. We also set
\begin{align*}
H&\df \ker\chi, & H_0&\df H\cap \bar{\Gamma}(2), & H_1&\df H_0\langle \bar{V}\rangle,& H_2&\df H\langle \bar{V}\rangle.
\end{align*}
We collectively refer to these groups as the \emph{$H$-groups}. Note that the group $H$, and thus all of the $H$-groups, does not depend on the particular primitive $n$th root of unity $\lambda = \chi(\bar U)$, it only depends on its order $n$ and the sign $\veps$ satisfying $\chi(\bar V) = \veps$. The next fact is only slightly less obvious.
\begin{lem}
\label{lem:H0}
  The group $H_0$ depends only on the order $n$ of $\chi(\bar U)$.
\end{lem}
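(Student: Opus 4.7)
The plan is to realize $\chi|_{\bar{\Gamma}(2)}$ as the composition of a $\lambda,\veps$-independent homomorphism with an injection $\ZZ/n\ZZ\hookrightarrow \CC^\times$. The crux is a single computation: since $V=TU^{-1}$ one has $\bar T=\bar V\bar U$, and using that $\chi$ is a group homomorphism together with $\chi(\bar V)^2=1$ gives
\[
\chi(\bar T^2)=\chi(\bar V\bar U\bar V\bar U)=\veps^2\lambda^2=\lambda^2.
\]
In particular the sign $\veps$ already drops out upon restriction to $\bar{\Gamma}(2)$, as one would expect since $\bar V\notin \bar{\Gamma}(2)$.

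Second, recall from Section 2 that $\bar{\Gamma}(2)=\langle \bar T^2,\bar U\rangle$ is a free group of rank $2$, so its abelianization is the free abelian group $\ZZ\bar T^2\oplus \ZZ\bar U$. Define $\alpha\colon \bar{\Gamma}(2)^{\mathrm{ab}}\to \ZZ/n\ZZ$ by $\bar T^2\mapsto 2$ and $\bar U\mapsto 1$. By the first paragraph, $\chi|_{\bar{\Gamma}(2)}$ factors as
\[
\bar{\Gamma}(2)\twoheadrightarrow \bar{\Gamma}(2)^{\mathrm{ab}}\xrightarrow{\ \alpha\ }\ZZ/n\ZZ\xrightarrow{\ 1\mapsto\lambda\ }\CC^\times.
\]
The last arrow is injective because $\lambda$ has exact order $n$, hence
\[
H_0=\ker\bigl(\chi|_{\bar{\Gamma}(2)}\bigr)=\ker\bigl(\alpha\circ\mathrm{ab}\bigr),
\]
and this description involves only the integer $n$.

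I do not foresee any serious obstacle: the content reduces to the identity $\chi(\bar T^2)=\lambda^2$, combined with the observation that varying $\lambda$ among primitive $n$th roots of unity only alters the final injection by an automorphism of $\ZZ/n\ZZ$, which does not change the kernel.
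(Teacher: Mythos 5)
Your argument is correct, and it takes a genuinely different route from the paper. The paper works inside $\bar\Gamma_0(2)$: it first describes $H=\ker\chi$ by generators (powers of $\bar U$, the commutator subgroup $\bar\Gamma_0(2)'$, and possibly an element involving $\bar V$) and then computes the intersection with $\bar\Gamma(2)$ directly, using $\bar U\in\bar\Gamma(2)$ and $\langle\bar V\rangle\cap\bar\Gamma(2)=1$ to see that the $\bar V$-contribution disappears; this has the side benefit of exhibiting $H_0=\langle\bar U^n\rangle\bar\Gamma_0(2)'$ explicitly, a description the paper reuses later. You instead never touch $H$ itself: since $H_0=\ker\bigl(\chi|_{\bar\Gamma(2)}\bigr)$, you use the freeness of $\bar\Gamma(2)$ on $\bar T^2,\bar U$ together with the computation $\chi(\bar T^2)=\veps^2\lambda^2=\lambda^2$ to factor $\chi|_{\bar\Gamma(2)}$ through the fixed surjection $\bar\Gamma(2)\to\ZZ/n\ZZ$, $\bar T^2\mapsto 2$, $\bar U\mapsto 1$, followed by an injection of $\ZZ/n\ZZ$ into $\CC^\times$; the kernel is then visibly determined by $n$ alone. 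This is cleaner in one respect: it is uniform in $\veps$ and the parity of $n$, so it requires no knowledge of how $\ker\chi$ is generated (in particular it painlessly covers the case $n$ even, $\veps=-1$, where $\ker\chi$ is generated over $\bar\Gamma_0(2)'$ by $\bar U^{n/2}\bar V$ rather than by $\bar U^n$ and a power of $\bar V$), and it gives $[\bar\Gamma(2):H_0]=n$ for free. What it does not hand you directly is the explicit generating set $H_0=\langle\bar U^n\rangle\bar\Gamma_0(2)'$, though that could be extracted with a little extra work.
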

\begin{proof}
Note that $\ker \chi$ is generated by $\bar U^n$, $\Gamma_0(2)'$, and $\bar V^m$ where $m = 0$ or $1$. We thus wish to show that 
\[\langle \bar U^n,\bar V\rangle \bar \Gamma_0(2)'\cap \bar \Gamma(2) = \langle \bar U^n\rangle \bar \Gamma_0(2)'\cap \bar \Gamma(2).\]
Now $\bar{U}\in\bar{\Gamma}(2)$, so the group on the right is just $\langle \bar U^n\rangle \bar \Gamma_0(2)'$.\ Similarly,
the group on the left can be written as $\langle \bar{U}^n\rangle \bar{\Gamma}_0(2)'\langle \bar{V}\rangle \cap \bar{\Gamma}(2)=
\langle \bar{U}^n\rangle \bar{\Gamma}_0(2)'\langle (\bar{V}\rangle \cap \bar{\Gamma}(2))=\langle \bar{U}^n\rangle \bar{\Gamma}_0(2)' $.\
This completes the proof of the Lemma.
\end{proof}

We are interested in the algebraic curves defined by the $H$-groups.\ In order to study these objects, we require more group-theoretic data about the $H$-groups themselves. Notice that $\bar{\Gamma}_0(2)'\subseteq H_0$.\ Thus the $H$-groups are each \emph{normal} in $\bar{\Gamma}_0(2)$ with abelian quotient.\
Their precise relation to each other depends on $\veps$ and the parity of $n$, and we explain this first.
\begin{lem}
\label{lemABCD} 
One of the following holds:
\begin{enumerate}
\item[A.] $n$ is \emph{odd}, $\veps=-1$, $H=H_0$ and $H_1=H_2$;
\item[B.] $n$ is \emph{odd}, $\veps=+1$ and $H=H_1=H_2$;
\item[C.] $n$ is \emph{even}, $\veps=-1$, all four of the $H$-groups are distinct and $\abs{H_2/H_0}=4$;
\item[D.] $n$ is \emph{even}, $\veps=+1$ and $H=H_1=H_2$.
\end{enumerate}
Moreover, in all cases $\abs{H_1/H_0} = 2$. 
\end{lem}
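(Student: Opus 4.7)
The plan is to reduce the whole statement to linear algebra in the abelianization $\bar{\Gamma}_0(2)/\bar{\Gamma}_0(2)' \cong \ZZ \oplus \ZZ/2\ZZ$, whose two summands are generated by the images of $\bar{U}$ and $\bar{V}$ (established in the discussion just before Lemma \ref{lemABCD}). First I would verify that every $H$-group contains $\bar{\Gamma}_0(2)'$: for $H$ this holds because $\chi$ factors through the abelianization, while for $H_0$, $H_1$, $H_2$ it follows from the identity $\bar{\Gamma}_0(2)' = \bar{\Gamma}(2)'\langle \bar{T}^2\bar{U}^2\rangle$ (in particular $\bar{\Gamma}_0(2)' \subseteq \bar{\Gamma}(2)$) recorded earlier in this section. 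Letting $\pi\colon \bar{\Gamma}_0(2) \to \ZZ \oplus \ZZ/2\ZZ$ denote the projection, every $H$-group is completely determined by its image under $\pi$, and indices and containments can be read off from these images.

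Next I would compute $\pi(\bar{\Gamma}(2))$. Since $\bar{\Gamma}(2) = \langle \bar{T}^2,\bar{U}\rangle$ and since the relation $\bar{T}\bar{U}\bar{T}^{-1}\bar{U}^{-1} = \bar{T}^2\bar{U}^2$ forces $\bar{T}^2 \equiv \bar{U}^{-2} \pmod{\bar{\Gamma}_0(2)'}$, one obtains $\pi(\bar{\Gamma}(2)) = \ZZ \oplus 0$. Combined with $\bar{\Gamma}_0(2)' \subseteq H_0$, a small diagram chase gives $\pi(H_0) = \pi(H) \cap (\ZZ \oplus 0)$, while $\pi(H_1) = \pi(H_0) + (0 \oplus \ZZ/2\ZZ)$ and $\pi(H_2) = \pi(H) + (0 \oplus \ZZ/2\ZZ)$.

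With this setup the case analysis reduces to computing $\pi(H)$ in each of the four cases using $\chi(\bar{U}^a\bar{V}^b) = \lambda^a\veps^b$. When $\veps=+1$, the kernel condition becomes $n{\mid}a$, so $\pi(H) = n\ZZ \oplus \ZZ/2\ZZ$ and $\bar{V} \in H$; this forces $H = H_1 = H_2$, covering cases B and D. When $\veps=-1$ and $n$ is odd, no odd root of unity equals $-1$, so $\pi(H) = n\ZZ \oplus 0 \subseteq \pi(\bar{\Gamma}(2))$, giving $H = H_0$ and hence $H_1 = H_2$, which is case A. When $\veps=-1$ and $n$ is even, $\pi(H)$ is the infinite cyclic group generated by $(n/2,1)$, and direct inspection yields $\pi(H_0) = n\ZZ \oplus 0$, $\pi(H_1) = n\ZZ \oplus \ZZ/2\ZZ$, and $\pi(H_2) = (n/2)\ZZ \oplus \ZZ/2\ZZ$; all four images are pairwise distinct with $|\pi(H_2)/\pi(H_0)| = 4$, giving case C. The equality $[H_1:H_0]=2$ in every case follows at once from $H_0 \subseteq \bar{\Gamma}(2)$ together with $\bar{V}\notin\bar{\Gamma}(2)$. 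The only place that requires real care is case C, where the four groups must be shown to be \emph{distinct} rather than merely nested; this is handled by exhibiting explicit generators of $\pi(H)$, $\pi(H_1)$, $\pi(H_2)$ in $\ZZ \oplus \ZZ/2\ZZ$ and checking that no two coincide.
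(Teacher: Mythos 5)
Your proof is correct and follows essentially the same route as the paper: both arguments rest on the identification $\bar{\Gamma}_0(2)/\bar{\Gamma}_0(2)'\cong\ZZ\oplus\ZZ/2\ZZ$, the values $\chi(\bar U)=\lambda$, $\chi(\bar V)=\veps$, the fact that $\bar V\notin\bar\Gamma(2)$, and a case split on $\veps$ and the parity of $n$ (in particular, whether $\lambda^a$ can equal $-1$). The only difference is presentational: you make the projection $\pi$ explicit and compute the images of all four $H$-groups, whereas the paper argues the same facts directly at the group level (e.g.\ $\ker\chi\subseteq\bar\Gamma(2)$ when $n$ is odd and $\veps=-1$, and the index-$2$ diamond in Case C).
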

\begin{pf} 
Because $\bar{V}\in \bar{\Gamma}_0(2)\setminus{\bar{\Gamma}(2)}$ we always have $\abs{H_1/H_0}=2$.\
If $\veps=+1$ then $\chi(\bar{V})=1$, so $\bar{V}\in H\setminus{H_0}$ and B or D holds according to the parity of $n$.

If $n$ is \emph{odd} and $\veps=-1$ then from (\ref{chidef}) we see immediately that $\ker\chi\subseteq \bar{\Gamma}(2)$, i.e.,
$H=H_0$.\ Then A holds.\ 

Finally, if $n$ is \emph{even} and $\veps=-1$ then we again use (\ref{chidef}) to see that $\ker\chi$ contains an element in 
$\bar{\Gamma}_0(2)\setminus{\bar{\Gamma}(2)}$, whereas $\bar{V}\notin \ker\chi$.\ Pictorially, there is a diagram of containments of index $2$:
\[\xymatrix{
&H_2&\\
H\ar[ru] && H_1 \ar[lu]\\
&H_0\ar[lu]\ar[ru]&
}\]
This is case C.
\end{pf}

In what follows, we refer to the different possibilities of the preceding Lemma as Case A, Case B, etc.

\begin{lem}\label{lemH0index} The following hold.
\begin{enumerate}
\item[(i)]$\abs{\bar{\Gamma}(2)/H_0}=\abs{\bar{\Gamma}_0(2)/H_1}=n$.
\item[(ii)] $\bar{\Gamma}(2) = H_0\langle \bar{U}\rangle = H_0\langle\bar{T}\bar{U}\bar{T}^{-1}\rangle$.
\item[(iii)] $\abs{\bar{\Gamma}(2)/H_0\langle \bar{T}^2\rangle} \leq 2$ with equality if and only if $n$ is \emph{even}.
\end{enumerate}
\end{lem}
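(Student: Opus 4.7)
The plan is to reduce everything to understanding $\chi_0 \df \chi|_{\bar\Gamma(2)}$. Since $V = TU^{-1}$, we have $\bar T = \bar V\bar U$, so $\chi(\bar T) = \veps\lambda$ and $\chi(\bar T^2) = \lambda^2$. As the paper has already recorded that $\bar\Gamma(2) = \langle \bar T^2,\bar U\rangle$ is free of rank $2$, the character $\chi_0$ factors through the abelianization $\bar\Gamma(2)^{\mathrm{ab}}\cong \ZZ^2$ and its image is generated by $\lambda^2$ and $\lambda$, hence equals $\langle\lambda\rangle$ of order $n$. So $H_0 \supseteq \bar\Gamma(2)'$ and $\bar\Gamma(2)/H_0$ is cyclic of order $n$, which gives the first equality in (i).

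For the second equality in (i), I would analyze the tower $H_0 \subseteq H_1 \subseteq \bar\Gamma_0(2)$. Because $\bar V \notin \bar\Gamma(2)$ and $\bar V^2 = 1$, one has $[H_1:H_0] = 2$, and combined with $[\bar\Gamma_0(2):\bar\Gamma(2)] = 2$ and the previous computation, multiplicativity of indices yields $[\bar\Gamma_0(2):H_1] = 2n/2 = n$.

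For (ii), the image of $\bar U$ in the cyclic quotient $\bar\Gamma(2)/H_0$ corresponds to $\lambda$, a generator, so $\bar\Gamma(2) = H_0\langle\bar U\rangle$. The same reasoning applies to $\bar T\bar U\bar T^{-1}$, since $\chi$ takes values in the abelian group $\CC^\times$ gives $\chi(\bar T\bar U\bar T^{-1}) = \chi(\bar U) = \lambda$, which again generates the cyclic quotient.

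Finally, for (iii), the image of $\bar T^2$ in $\bar\Gamma(2)/H_0 \cong \ZZ/n\ZZ$ corresponds to $\lambda^2$, and the subgroup it generates has index $\gcd(n,2)$ in $\langle\lambda\rangle$. Thus $[\bar\Gamma(2) : H_0\langle\bar T^2\rangle] = \gcd(n,2)$, which is $2$ if $n$ is even and $1$ otherwise. There is no real obstacle in the argument; the essential input is the freeness of $\bar\Gamma(2)$ on $\bar T^2,\bar U$, which converts $\chi_0$ into an explicit surjection $\ZZ^2 \twoheadrightarrow \langle\lambda\rangle$, after which each statement reduces to a short index calculation.
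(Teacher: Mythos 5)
Your proof is correct and takes essentially the same approach as the paper: evaluate $\chi$ on the generators $\bar{T}^2,\bar{U}$ of the free group $\bar{\Gamma}(2)$, identify $\bar{\Gamma}(2)/H_0$ with the cyclic image $\langle\lambda\rangle$ of order $n$, and finish by counting indices, using $\abs{H_1/H_0}=2$ for the second equality in (i) and the image of $\lambda^2$ for (iii). The only cosmetic difference is the second equality of (ii), where the paper conjugates $\bar{\Gamma}(2)=H_0\langle\bar{U}\rangle$ by $\bar{V}$ while you observe directly that $\chi(\bar{T}\bar{U}\bar{T}^{-1})=\lambda$; both are immediate.
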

\begin{pf} Since the abelianization of $\bar{\Gamma}_0(2)$ is generated by the images of $\bar{V}, \bar{U}$ and
$\bar{V}\notin\bar{\Gamma}(2)$, it
follows that $\bar{\Gamma}(2)=H_0\langle \bar{U}\rangle$.\ Then because $\chi(\bar{U})=\lambda$ is a primitive $n$th root of unity, the
first equality of part (i) of the Lemma follows immediately.\ The second follows because $\abs{H_1/H_0}=2$ (Lemma \ref{lemABCD}).

We have already established the first equality in part (ii).\ Then because $\bar{V}$ normalizes both $\bar{\Gamma}(2)$ and $H_0$, and since $\bar{V}\bar{U}\bar{V}^{-1}=\bar{T}\bar{U}\bar{T}^{-1}$, the second equality  also holds.

Finally, the equality $\chi(\bar{T}^2)=\lambda^2$ 
 follows from Lemma \ref{thmrho1}.\ Part (iii) is an immediate consequence of this together with part (i).\ This completes the proof of the Lemma.
\end{pf}

\begin{lem}\label{leminvolutions} 
The group $\bar{\Gamma}_0(2)$ contains a \emph{unique} conjugacy class of involutions, and $H_1$ contains $n$ conjugacy classes of involutions.\ In Case C, $H_2$ contains $n/2$ conjugacy classes of involutions.
\end{lem}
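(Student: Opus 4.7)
The plan is to exploit the fact that $\bar\Gamma_0(2)$ is isomorphic to the free product $\langle\bar T\rangle * \langle\bar V\rangle \cong \ZZ * \ZZ/2\ZZ$, which follows from its Fuchsian signature (genus zero, two cusps, one elliptic point of order $2$). Since all torsion in a free product $A*B$ is conjugate into a factor and $\langle\bar V\rangle$ is the only torsion factor here, every involution of $\bar\Gamma_0(2)$ is conjugate to $\bar V$; this gives the first assertion.

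For the counts in $H_1$ and $H_2$, I need two preparatory facts. First, both groups are normal in $\bar\Gamma_0(2)$ with abelian quotient (noted just after Lemma~\ref{lem:H0}) and contain $\bar V$ by construction. Hence $g\bar V g^{-1} \equiv \bar V \pmod{H_i}$ for every $g \in \bar\Gamma_0(2)$, so every involution of $\bar\Gamma_0(2)$ already lies in $H_i$. Second, $C_{\bar\Gamma_0(2)}(\bar V) = \langle\bar V\rangle$: any element commuting with $\bar V$ preserves the unique fixed point $\tau_0 \in \uhp$ of $\bar V$ and therefore lies in the stabilizer of $\tau_0$, which is exactly $\langle\bar V\rangle$ since the sole elliptic point of $X_0(2)$ has order $2$.

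Every involution of $\bar\Gamma_0(2)$ may now be written uniquely as $g\bar V g^{-1}$ with $g$ ranging over $\bar\Gamma_0(2)/\langle\bar V\rangle$, and a short manipulation using $C_{\bar\Gamma_0(2)}(\bar V) = \langle\bar V\rangle$ shows that $g\bar V g^{-1}$ and $g'\bar V g'^{-1}$ are $H_i$-conjugate if and only if $H_i g\langle\bar V\rangle = H_i g'\langle\bar V\rangle$. Thus the number of $H_i$-conjugacy classes of involutions equals $\lvert H_i\backslash \bar\Gamma_0(2)/\langle\bar V\rangle\rvert$. Because $\bar V \in H_i$, each double coset $H_i g\langle\bar V\rangle$ collapses to the single right coset $H_i g$, so this count equals $[\bar\Gamma_0(2):H_i]$. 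Lemma~\ref{lemH0index}(i) gives $n$ for $H_1$, and in Case~C, Lemma~\ref{lemABCD} gives $[H_2:H_1]=2$, hence $n/2$ for $H_2$. The one delicate step is the centralizer identification $C_{\bar\Gamma_0(2)}(\bar V)=\langle\bar V\rangle$, which is handled by the Fuchsian-geometry argument above (or equivalently by Kurosh's theorem on centralizers in free products).
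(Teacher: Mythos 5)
Your argument is correct, and its core counting step is the same as the paper's: all involutions of $\bar{\Gamma}_0(2)$ are conjugate to $\bar{V}$, hence lie in the normal subgroup $H_i$ containing $\bar{V}$; since the centralizer of $\bar{V}$ is just $\langle \bar{V}\rangle \subseteq H_i$, the $H_i$-classes are counted by $\lvert H_i\backslash \bar{\Gamma}_0(2)/\langle\bar{V}\rangle\rvert = [\bar{\Gamma}_0(2):H_i]$, which is the paper's orbit argument phrased via double cosets. Where you differ is in how the two preliminary facts are justified: the paper proves the uniqueness of the involution class in $\bar{\Gamma}_0(2)$ by descending from the unique class in $\Gamma$ together with the observation that $\bar{\Gamma}_0(2)=\bar{\Gamma}(2)\langle x\rangle$ forces the conjugating element to normalize the self-normalizing group $\bar{\Gamma}_0(2)$, and it simply cites that involutions in $\bar{\Gamma}$ are self-centralizing; you instead invoke the Fuchsian/free-product structure $\bar{\Gamma}_0(2)\cong \ZZ * \ZZ/2\ZZ$ (torsion conjugate into factors) and a fixed-point argument in $\uhp$ for the centralizer. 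Both routes are sound; the paper's stays entirely within the elementary group theory it has already set up, while yours imports standard facts about Fuchsian groups and free products --- note that for your first step only the abstract isomorphism type is needed, so you need not verify that $\bar{T},\bar{V}$ themselves form a free-product basis.
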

\begin{proof}
It is well-known that $\bar\Gamma_0(2)$ has a \emph{unique} conjugacy class of involutions, but here's a proof: it is even more well-known that $\Gamma$ has a unique such conjugacy class.\ So if  $x, y\in\bar\Gamma_0(2)$  are two involutions, there is $g\in\Gamma$ such that $g^{-1}xg=y$.\ Since $\bar\Gamma_0(2)=\bar\Gamma(2)\langle x\rangle = \bar\Gamma(2)\langle y \rangle$, it follows that $g$ normalizes $\bar\Gamma_0(2)$.\ But this latter group is self-normalizing
(cf.\ Lemma \ref{lemindex3}), whence $g\in\bar\Gamma_0(2)$.\ Thus $x$ and $y$ are conjugate in $\bar{\Gamma}_0(2)$ and the proof is complete.

Now  $\bar{V}\in H_1 \unlhd \bar{\Gamma}_0(2)$.\ Since $\bar{V}$ is an involution, \emph{all} involutions of $\bar\Gamma_0(2)$ are contained in 
$H_1$ by the first paragraph.\ Involutions of $\bar\Gamma$ being self-centralizing, it follows that the conjugation action of $H_1$ on its involutions falls into 
$|\bar{\Gamma}_0(2)/H_1|$ classes that are themselves transitively permuted by $\bar\Gamma_0(2)$.\ Since 
$\abs{\bar{\Gamma}_0(2)/H_1}=n$ by Lemma \ref{lemH0index}
 then $H_1$ has $n$ classes of involutions.
 
  In Case C, the identical proof applies with $H_2$ in place of $H_1$.\ The only difference is that $H_2$ has index $n/2$ in $\bar{\Gamma}_0(2)$, so that it has
$n/2$ classes of involutions.\ This completes the proof of the Lemma.
\end{proof}

\begin{lem}\label{lemABDellpts} In cases A, B and D we have $\bar{\Gamma}_0(2)/H_1=\{\bar{U}^jH_1,\ 0\leq j\leq n-1\}$.\
Representatives for the elliptic points of $H_1$ are the numbers
\begin{eqnarray*}
\{U^{-j}(1+i)/2 \mid 0\leq j \leq n-1 \} = \left\{\frac{1}{-2j+(1-i)}\mid 0\leq j \leq n-1\right\}.
\end{eqnarray*}
\end{lem}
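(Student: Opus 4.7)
The plan is to verify the coset decomposition of $\bar{\Gamma}_0(2)/H_1$ by powers of $\bar{U}$, then to pass from this to the elliptic points of $X_{H_1}$ via $H_1$-conjugacy classes of involutions (using Lemma \ref{leminvolutions}), and finally to compute the relevant fixed points in $\mathcal{H}$ explicitly.

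For the coset decomposition: Lemma \ref{lemH0index}(i) gives $|\bar{\Gamma}_0(2)/H_1|=n$, and combining $\bar{\Gamma}_0(2)=\bar{\Gamma}(2)\langle\bar{V}\rangle$ with $\bar{\Gamma}(2)=H_0\langle\bar{U}\rangle$ (Lemma \ref{lemH0index}(ii)) and $\bar{V}\in H_1$ yields $\bar{\Gamma}_0(2)=\langle\bar{U}\rangle H_1$. To confirm that $\bar{U}^0,\ldots,\bar{U}^{n-1}$ lie in distinct cosets, I would observe that if $\bar{U}^k\in H_1$ with $0<k<n$, then since $\bar{V}\notin\bar{\Gamma}(2)$ we have $\bar{U}^k\in H_1\cap\bar{\Gamma}(2)=H_0\subseteq\ker\chi$, forcing $\lambda^k=1$ and contradicting the primitivity of $\lambda$.

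Next, since $\bar{\Gamma}_0(2)=\langle\bar{T}\rangle*\langle\bar{V}\rangle\cong\ZZ*\ZZ/2\ZZ$ (only relation $\bar{V}^2=1$), its sole torsion is of order $2$, so $H_1$ contains no element of order $3$. Hence every elliptic point of $X_{H_1}$ has order $2$ and corresponds to an $H_1$-conjugacy class of involutions, and Lemma \ref{leminvolutions} supplies exactly $n$ such classes.

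For the explicit description, solving $\bar{V}z=z$ (i.e., $2z^2-2z+1=0$) shows that $\bar{V}$ fixes $(1+i)/2\in\mathcal{H}$. By the proof of Lemma \ref{leminvolutions}, representatives for the $n$ conjugacy classes of involutions in $H_1$ are the conjugates $\bar{U}^{-j}\bar{V}\bar{U}^{j}$ for $0\le j\le n-1$ (the set of coset representatives $\bar{U}^{-j}$ is equivalent to the set $\bar{U}^{j}$ from step one), whose fixed points are $U^{-j}\cdot(1+i)/2$. A direct calculation with $U^{-j}=\twomat{1}{0}{-2j}{1}$ gives
\[
U^{-j}\cdot\frac{1+i}{2}=\frac{(1+i)/2}{-j(1+i)+1}=\frac{1+i}{2(1-j)-2ji}=\frac{1}{(1-i)-2j},
\]
which matches the displayed equality. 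I do not anticipate a substantial obstacle; the argument is essentially structural, with the only real computation being the explicit action of $U^{-j}$ on $(1+i)/2$ above.
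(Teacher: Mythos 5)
Your proof is correct and follows essentially the same route as the paper: distinctness of the cosets $\bar{U}^jH_1$ via the character $\chi$, then the $n$ involution classes of Lemma \ref{leminvolutions} with representatives $\bar{U}^{-j}\bar{V}\bar{U}^{j}$ and the fixed point $(1+i)/2$ of $\bar{V}$. Your only (harmless) variations are treating all cases uniformly through $H_1\cap\bar{\Gamma}(2)=H_0\subseteq\ker\chi$ instead of splitting Case A from Cases B, D, and spelling out the absence of order-$3$ torsion and the explicit computation of $U^{-j}\cdot(1+i)/2$, which the paper leaves implicit.
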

\begin{pf} By Lemma \ref{lemH0index} we have $|\bar{\Gamma}_0(2)/H_1|=n$.\ Now each $\bar{U}^j\in \bar{\Gamma}_0(2)$, and since $\chi(\bar{U}^j)=\lambda^j\not=1$ for $0\leq j \leq n-1$ then these elements
give \emph{distinct} representatives of $\bar{\Gamma}_0(2)/H$.\ So if $H=H_1$ we are done.\ This handles cases B and D by Lemma \ref{lemABCD}.\ In case A,
 we have $H_1=H\langle \bar{V}\rangle$, and if some $\bar{U}^j \in H\bar{V}$ then $\lambda^j=\chi(\bar{U}^j)=\chi(\bar{V})=-1$, and this is
 impossible because $n$ is \emph{odd} in case A.\ This completes the proof of the first assertion of the Lemma in all three cases.
 
 It follows from this and the discussion presented in the course of the proof of Lemma \ref{leminvolutions}, that the $n$ conjugacy classes of involutions in $H_1$ 
 have representatives $\bar{U}^{-j}\bar{V}\bar{U}^{j}\ (0\leq j\leq n-1)$.\ Since $\bar{V}$ itself has fixed-point $\frac{1+i}{2}$, it follows that the
 elliptic points of $H_1$ are represented by $\bar{U}^{-j}\left(\frac{1+i}{2}\right)$, and the Lemma is proved.
\end{pf}
Now we are ready to study the curves.\ If $B \subseteq \Gamma$ is a subgroup of finite index, we write $X_B$ for the projective algebraic curve whose complex points are identified with $B\backslash \uhp \cup \PP^1(\QQ)$. In the special cases $B = \bar \Gamma(N)$ or $\bar{\Gamma}_0(2)$, we denote the corresponding curves by $X(N)$ and $X_0(N)$, respectively.\ The \emph{genus} of $X_B$ is denoted by $g_B$.\ 
 
 Containments among the $H$-groups define various natural coverings of degree $2$ among the corresponding curves (cf.\ Lemma \ref{lemABCD}):
 \begin{align*}
X_{H_0}&\rightarrow X_{H_1}  & \mbox{(Cases A, B, D)}
\end{align*}
and
\begin{equation}
\label{diagcover}
\begin{aligned}
\xymatrix{
&X_{H_0}\ar[rd]\ar[ld]&&\\
X_H\ar[rd] && X_{H_1} \ar[ld] & \mbox{(Case C)}\\
&X_{H_2}&&
} 
\end{aligned}
\end{equation}
We can now state the first main result of this Section.
 \begin{thm}\label{thmhyperell} 
The curve $X_{H_1}$ has two cusps $\{\infty, 0\}$ and genus 0.\ In particular, $X_{H_0}$ is a \emph{hyperelliptic curve}.
\end{thm}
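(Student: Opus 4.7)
The plan is to determine the genus of $X_{H_1}$ via the Riemann--Hurwitz formula for subgroups of $\Gamma$:
\[
g_{H_1}=1+\frac{[\Gamma:H_1]}{12}-\frac{e_2}{4}-\frac{e_3}{3}-\frac{c}{2},
\]
where $e_2,e_3$ count the elliptic points of $X_{H_1}$ of orders $2$ and $3$, and $c$ counts its cusps. Three of these inputs are immediate: $[\Gamma:H_1]=3\cdot[\bar{\Gamma}_0(2):H_1]=3n$ from Lemma \ref{lemH0index}(i); $e_2=n$ is exactly Lemma \ref{leminvolutions}; and $e_3=0$, because $H_1\subseteq\bar{\Gamma}_0(2)$ cannot contain any element of order $3$ (since $\bar{\Gamma}(2)$ is torsion-free and has index $2$ in $\bar{\Gamma}_0(2)$, forcing every torsion element of $\bar{\Gamma}_0(2)$ to have order dividing $2$).

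The bulk of the work is in showing $c=2$, with the two cusps represented by $\infty$ and $0$. Since $H_1\subseteq\bar{\Gamma}_0(2)$, each cusp of $X_{H_1}$ lies above one of the two cusps $\{\infty,0\}$ of $X_0(2)$. I would first show that each fibre consists of a single point by verifying $H_1\langle\bar{T}\rangle=H_1\langle\bar{U}\rangle=\bar{\Gamma}_0(2)$: the relation $\bar{T}=\bar{V}\bar{U}$ combined with $\bar{V}\in H_1$ gives $\bar{T}\equiv\bar{U}\pmod{H_1}$, so the two products coincide, and Lemma \ref{lemH0index}(ii) yields $\bar{\Gamma}(2)=H_0\langle\bar{U}\rangle\subseteq H_1\langle\bar{U}\rangle$, while $\bar{V}\in H_1$ supplies the remaining coset of $\bar{\Gamma}(2)$ in $\bar{\Gamma}_0(2)$. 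Next I would compute the cusp widths using $\chi(\bar{U})=\lambda$ and $\chi(\bar{T})=\veps\lambda$ (from Proposition \ref{thmrho1}): a short case analysis on the parity of $k$ shows $\bar{T}^k\in H_1$ iff $n\mid k$, yielding $\Gamma$-width $n$ at $\infty$; and since the $\Gamma$-stabilizer of $0$ is generated by $\left(\begin{smallmatrix}1&0\\-1&1\end{smallmatrix}\right)$, whose square is $\bar{U}^{-1}$, while $\bar{U}^k\in H_1$ iff $n\mid k$ (the coset $H_0\bar{V}$ being excluded since $\bar{V}\notin\bar{\Gamma}(2)$), the width at $0$ is $2n$. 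The total $n+2n=3n=[\Gamma:H_1]$ then confirms no further cusps exist.

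Substituting into Riemann--Hurwitz yields $g_{H_1}=1+\tfrac{3n}{12}-\tfrac{n}{4}-1=0$, proving the first assertion. For the hyperelliptic statement, the natural cover $X_{H_0}\to X_{H_1}$ has degree $|H_1/H_0|=2$ by Lemma \ref{lemABCD}, and $X_{H_1}$, being a genus-zero curve with rational points (the two cusps), is isomorphic to $\PP^1$; thus $X_{H_0}$ admits a degree-$2$ map to $\PP^1$ and is hyperelliptic. The principal obstacle is the cusp count: the computation must be verified uniformly across the cases A--D of Lemma \ref{lemABCD}, carefully tracking when a power of $\bar{T}$ can lie in the non-trivial coset $H_0\bar{V}$ of $H_0$ inside $H_1$.
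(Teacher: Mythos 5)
Your proof is correct, and it takes a genuinely different route from the paper. You compute the genus of $X_{H_1}$ intrinsically, via the standard genus formula for the degree-$3n$ covering $X_{H_1}\to X(1)$: index $3n$ from Lemma \ref{lemH0index}(i), $e_2=n$ from Lemma \ref{leminvolutions}, $e_3=0$ because all torsion in $\bar\Gamma_0(2)$ has order $2$, and a direct cusp count $c=2$ obtained from $H_1\langle\bar T\rangle=H_1\langle\bar U\rangle=\bar\Gamma_0(2)$ together with the widths $n$ at $\infty$ and $2n$ at $0$ (your width computations check out: $\bar T^k\in H_1$ iff $n\mid k$ in all four cases of Lemma \ref{lemABCD}, and likewise for $\bar U$). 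The paper instead works upward from $H_0$: it first determines the cusps and genus of $X_{H_0}$ (Lemmas \ref{lemcuspodd} and \ref{lemcuspeven}) and then applies Riemann--Hurwitz to the degree-$2$ cover $X_{H_0}\to X_{H_1}$, tracking how $\bar V$ permutes the cusps of $H_0$, with a case split on the parity of $n$. Your argument is uniform in $n$, needs neither of the $H_0$ cusp/genus lemmas nor Lemma \ref{lemABDellpts}, and gives the cusp set $\{\infty,0\}$ and their widths directly; the paper's route is less self-contained at this point but produces, as a byproduct, the precise ramification data of $X_{H_0}\to X_{H_1}$ (which cusps ramify versus which are exchanged by $\bar V$), and this information is reused later in deriving the explicit equation $y^2=x^n+64$ in Theorem \ref{thmhypereqn}. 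The final step is identical in both: a degree-$2$ map onto a genus-zero curve ($\cong\PP^1$ over $\CC$) exhibits $X_{H_0}$ as hyperelliptic, with the usual caveat (shared by the paper) that for small $n$ the genus of $X_{H_0}$ is $0$ or $1$, so ``hyperelliptic'' should be read as ``admits a degree-$2$ map to $\PP^1$.''
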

 
To prove this we must translate the group-theoretic facts about the $H$-groups established earlier into geometric facts about the corresponding algebraic curves.

\begin{lem}\label{lemcuspodd} 
If $n$ is \emph{odd}, $X_{H_0}$ has  three cusps $\{\infty, 0, 1\}$ and genus $(n-1)/2$.
\end{lem}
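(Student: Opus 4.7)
The plan is to compute the cusps of $X_{H_0}$ by analyzing the orbits of the three $\bar\Gamma(2)$-cusp stabilizers acting on $\bar\Gamma(2)/H_0$, and then invoke Riemann--Hurwitz on the degree-$n$ covering $X_{H_0}\to X(2)$.

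First I would observe that since $H=\ker\chi$ is normal in $\bar\Gamma_0(2)$ and $\bar\Gamma(2)$ has index $2$ in $\bar\Gamma_0(2)$, the subgroup $H_0=H\cap\bar\Gamma(2)$ is normal in $\bar\Gamma(2)$. By Lemma \ref{lemH0index}(i)--(ii), $\bar\Gamma(2)/H_0$ is cyclic of order $n$, generated by the image of $\bar U$. I would then identify generators of the three cusp stabilizers in $\bar\Gamma(2)$: the element $\bar T^2$ fixes $\infty$, $\bar U$ fixes $0$, and a direct computation gives $\bar T\bar U\bar T^{-1}=\stwomat{3}{-2}{2}{-1}$, which fixes $1$ (this is the conjugate of the stabilizer of $0$ by $\bar T$, since $\bar T(0)=1$). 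All three elements lie in $\bar\Gamma_0(2)$, and because $\chi$ is a homomorphism to an abelian group we have $\chi(\bar T^2)=\lambda^2$ and $\chi(\bar U)=\chi(\bar T\bar U\bar T^{-1})=\lambda$. Since $n$ is odd and $\lambda$ is a primitive $n$th root of unity, each of $\lambda$ and $\lambda^2$ is itself a primitive $n$th root of unity, so each stabilizer-generator has order exactly $n$ in $\bar\Gamma(2)/H_0\cong\ZZ_n$.

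Because $H_0$ is normal in $\bar\Gamma(2)$, the orbits of any cyclic subgroup $\langle\gamma\rangle$ on $\bar\Gamma(2)/H_0$ all have the common size equal to the order of $\gamma H_0$. Applied to each of the three stabilizer generators, this shows that exactly one cusp of $X_{H_0}$ lies above each of $\infty,0,1$, each with ramification index $n$. Thus $X_{H_0}$ has three cusps, which we may label $\{\infty,0,1\}$ as claimed.

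Finally, $\bar\Gamma(2)$ is torsion-free, so $H_0$ is also torsion-free, and the covering $X_{H_0}\to X(2)$ is unramified on the interior. Since $X(2)$ has genus $0$, Riemann--Hurwitz gives
\[
2g_{H_0}-2 \;=\; n(-2) + 3(n-1),
\]
whence $g_{H_0}=(n-1)/2$. The main obstacle is assembling the normality and orbit-size argument cleanly so that the cusp count is justified uniformly across the three cusps; once that is in place, the character computations and the Riemann--Hurwitz step are routine.
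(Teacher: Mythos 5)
Your proof is correct and follows essentially the same route as the paper: both identify the cusp stabilizers $\bar T^2$, $\bar U$, $\bar T\bar U\bar T^{-1}$, use Lemma \ref{lemH0index} (with $n$ odd) to see that each maps onto the full cyclic quotient $\bar\Gamma(2)/H_0$ so that the three cusps $\{\infty,0,1\}$ persist and are fully ramified, and then apply Riemann--Hurwitz to the degree-$n$ covering $X_{H_0}\to X(2)$, using freeness of $\bar\Gamma(2)$ to rule out interior ramification. Your write-up merely makes explicit the orbit-counting step that the paper leaves as an immediate consequence of Lemma \ref{lemH0index}(ii),(iii).
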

\begin{proof}
Recall that $\bar\Gamma(2)$ has three cusps, with representatives given by $\infty$, $0$ and $1$.\
These cusps are stabilized by $\bar{T}, \bar{U}$ and $\bar{T}\bar{U}\bar{T}^{-1}$ respectively, and
because we are assuming that $n$ is odd, Lemma \ref{lemH0index}(ii), (iii)
makes it clear that $H_0$ has the \emph{same} three cusps as $\bar\Gamma(2)$.

Because $\bar\Gamma(2)$ is free, the covering $X_{H_0} \to X(2)$ is regular of degree $n$ away from the cusps, and fully ramified at the cusps.\ By the Riemann-Hurwitz theorem, we see that
the genus  of $H_0$ is $(n-1)/2$, as claimed. 
\end{proof}

Similarly, we have
\begin{lem}\label{lemcuspeven}
If $n$ is \emph{even}, $X_{H_0}$ has four cusps $\{0, 1, 1/2, \infty\}$ and genus $(n-2)/2$.
\end{lem}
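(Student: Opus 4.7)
The plan is to mirror the proof of Lemma \ref{lemcuspodd} by analyzing the covering $X_{H_0} \to X(2)$, which is regular of degree $n$ by Lemma \ref{lemH0index}(i). Since $\bar\Gamma(2)$ is free, hence torsion-free, the subgroup $H_0$ acts without fixed points on $\uhp$, so all ramification is concentrated over the three cusps $\infty, 0, 1$ of $X(2)$, whose $\bar\Gamma(2)$-stabilizers are $\langle \bar T^2\rangle$, $\langle \bar U\rangle$, and $\langle \bar T\bar U\bar T^{-1}\rangle$ respectively. Because $H_0$ is normal in $\bar\Gamma(2)$ with cyclic quotient of order $n$, the number of cusps of $X_{H_0}$ above a given $\bar\Gamma(2)$-cusp $c$ equals $[\bar\Gamma(2) : H_0\cdot\mathrm{Stab}(c)]$, and each such cusp has common ramification index equal to $n$ divided by that index.

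The key step is then to read off these indices from Lemma \ref{lemH0index}. Parts (ii) and (iii) of that lemma supply exactly what is needed: $\bar\Gamma(2) = H_0\langle \bar U\rangle = H_0\langle \bar T\bar U\bar T^{-1}\rangle$ yields a single cusp above $0$ and a single cusp above $1$, each with ramification index $n$ and naturally represented by $0$ and $1$ themselves; and $[\bar\Gamma(2) : H_0\langle \bar T^2\rangle] = 2$ for even $n$ yields two cusps above $\infty$, each with ramification index $n/2$. Taking $1$ and $\bar U$ as coset representatives for $\bar\Gamma(2)/H_0\langle \bar T^2\rangle$, these two cusps are represented by $\infty$ and $\bar U(\infty) = 1/2$, producing the claimed cusp set $\{0, 1, 1/2, \infty\}$.

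The genus follows immediately from Riemann--Hurwitz. The total ramification contribution is
\[
(n-1) + (n-1) + 2\!\left(\tfrac{n}{2}-1\right) \;=\; 3n-4,
\]
so $2g_{H_0} - 2 = -2n + (3n-4) = n-4$, whence $g_{H_0} = (n-2)/2$. The only essentially new feature compared to the odd case is the splitting of the cusp at $\infty$ into two cusps, which is furnished precisely by Lemma \ref{lemH0index}(iii) in the even case; hence no substantive obstacle is anticipated.
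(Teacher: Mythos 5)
Your proof is correct and follows essentially the same route as the paper: it identifies the cusps via Lemma \ref{lemH0index}(ii),(iii), noting that for even $n$ the fibre over $\infty$ splits into the two cusps $\infty$ and $\bar U(\infty)=1/2$ of ramification index $n/2$ while $0$ and $1$ remain fully ramified, and then applies Riemann--Hurwitz to the degree-$n$ cover $X_{H_0}\to X(2)$. The only difference is cosmetic: you make the Galois orbit-counting justification explicit, whereas the paper computes $\bar\Gamma(2)\cdot\infty=H_0\cdot\infty\cup H_0\cdot\tfrac12$ directly.
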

\begin{pf} The points $0$ and $1$ are distinct  cusps of $H_0$ by the same proof as before (using Lemma \ref{lemH0index}(ii)).\ Part (iii) of the same Lemma
shows that when $n$ is \emph{even},  $\infty$ behaves slightly differently, because now
$\bar\Gamma(2) = H_0\langle \bar{T}^2\rangle \cup H_0\langle \bar{T}^2\rangle \bar{U}$, so that
\begin{eqnarray*}
\bar\Gamma(2)\cdot\infty = (H_0\langle \bar{T}^2\rangle \cup H_0\langle \bar{T}^2\rangle \bar{U})\cdot\infty =H_0\cdot\infty \cup H_0\cdot\frac{1}{2}.
\end{eqnarray*}
This shows that $H_0$ has four cusps $\{0, 1, 1/2, \infty\}$.\ Moreover, in the covering $X_{H_0}\rightarrow X(2)$, the cusps $0$ and $1$ are fully ramified
and the ramification index at the other two cusps is $n/2$.\ The  Riemann-Hurwitz theorem again implies the desired genus formula.
\end{pf}

We turn to the proof of Theorem \ref{thmhyperell}.\ The relevant picture is as follows:
\begin{equation}
\label{covdiag1}
\begin{gathered}
\xymatrix{X_{H_0} \ar[rd]^n \ar[dd]_2 &\\
& X(2) \ar[dd]^2\\
X_{H_1} \ar[rd]_n &\\
& X_0(2)}
\end{gathered}
\end{equation}
where the arrow labels denote degrees of maps (cf.\ Lemma \ref{lemH0index}).\ We will apply the Riemann-Hurwitz formula to the
left vertical covering.\ The details are slightly different according to the parity of $n$, so let us first assume that $n$ is \emph{odd}.\
Then by Lemma \ref{lemcuspodd}, $H_0$ has  cusps $\{\infty, 0, 1\}$ and genus $(n-1)/2$.\
 As $H_1=H_0\langle \bar{V} \rangle$ and $\bar{V}$ exchanges the cusps $\{0, 1\}$, $X_{H_1}$ has two cusps
 and  only the infinite cusp of $X_{H_0}$ ramifies.\
 The $n$ elliptic points of $H_1$ (enumerated in Lemma \ref{lemABDellpts}) as well as $\infty$, each make a contribution of $1$ to $\sum_P(e_P-1)$, where $e_P$ is the ramification degree at a point $P\in X_{H_0}$.\ Therefore, 
\begin{eqnarray*}
&&2-2g_{H_0} = 2(2-g_{H_1})- \sum_P (e_P-1)\ \Rightarrow  3-n=4-2g_{H_1}-(n+1),
\end{eqnarray*}
and the desired result $g_{H_1}=0$ follows.

Now suppose that $n$ is \emph{even}.\
By Lemma \ref{lemcuspeven}, $H_0$ has  cusps $\{\infty, 0, 1, 1/2\}$ and genus $(n-2)/2$.\
In this case, we can check directly that $\bar{V}$ exchanges the cusps $\{0, 1\}$ and $\{\infty, 1/2\}$.\ 
So the only ramification arises from the $n$ elliptic points of $X_{H_1}$ (Lemma \ref{leminvolutions} still applies), and  we now obtain
\begin{eqnarray*}
 4-n=4-2g_{H_1}-n,
\end{eqnarray*}
 leading once again to $g_{H_1}=0$.\ This completes the proof of the Theorem. $\hfill \Box$

A similar argument shows 
\begin{thm} 
\label{t:casec}
In Case C, $X_H$ is a hyperelliptic curve of genus $[n/4]$.
\end{thm}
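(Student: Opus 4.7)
The approach is to adapt the argument of Theorem~\ref{thmhyperell}. The plan is first to show that $X_{H_2}$ has genus zero; combined with the degree-$2$ map $X_H\to X_{H_2}$ in diagram (\ref{diagcover}), this will exhibit $X_H$ as a hyperelliptic curve. I would then compute $g_H$ via Riemann--Hurwitz applied to the degree-$2$ cover $X_{H_0}\to X_H$, splitting by the residue of $n$ modulo $4$.

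For the genus of $X_{H_2}$, I would run Riemann--Hurwitz on $X_{H_1}\to X_{H_2}$ knowing $g_{H_1}=0$ by Theorem~\ref{thmhyperell}. The deck transformation may be taken to be $\bar U^{n/2}\in H_2\setminus H_1$ (since $\bar U^{n/2}\bar V\in H$ while $\bar U^{n/2}\notin H_1$). It fixes both cusps $\{\infty,0\}$ of $X_{H_1}$, producing a Riemann--Hurwitz contribution of $2$ from two ramified cusp preimages. Comparing the elliptic-point counts ($n$ in $X_{H_1}$ versus $n/2$ in $X_{H_2}$, from Lemma~\ref{leminvolutions}) forces each of the $n/2$ elliptic points of $X_{H_2}$ to lift to two unramified elliptic points of $X_{H_1}$, contributing nothing further. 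Riemann--Hurwitz then yields $g_{H_1}=2g_{H_2}$, whence $g_{H_2}=0$, and the composite $X_H\to X_{H_2}\cong\PP^1$ exhibits the hyperelliptic structure of $X_H$.

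To compute $g_H$ I would study the degree-$2$ cover $X_{H_0}\to X_H$. Since $H_0\subseteq\bar\Gamma(2)$ is torsion-free, only the four cusps $\{0,1,1/2,\infty\}$ of $X_{H_0}$ (Lemma~\ref{lemcuspeven}) can ramify, and the essential issue is how a deck representative permutes them. The image of the natural choice $h=\bar V\bar U^{n/2}\in H\setminus H_0$ in the abelianization $\bar\Gamma_0(2)/\bar\Gamma_0(2)'\cong\ZZ\oplus\ZZ/2\ZZ$ is $(n/2,1)$, and whether $h\in H_0\cdot\langle\bar T\rangle$, equivalently whether $h\cdot\infty$ and $\infty$ represent the same cusp of $X_{H_0}$, is controlled by the parity of $n/2$. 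If $n\equiv 0\pmod 4$, the parities are mismatched and $h$ swaps $\infty\leftrightarrow 1/2$ and $0\leftrightarrow 1$; the cover is unramified, $X_H$ has two cusps, and Riemann--Hurwitz gives $g_H=(g_{H_0}+1)/2=n/4$. If $n\equiv 2\pmod 4$, the parities match and one may instead use $\bar T^{n/2}\in H\setminus H_0$ as deck representative; this fixes the cusps $\infty$ and $1/2$ of $X_{H_0}$ (the latter by an analogous calculation with the stabilizer $\bar V\bar T\bar V^{-1}$ of $1/2$), while still swapping $0\leftrightarrow 1$. Two cusps of $X_{H_0}$ then ramify, $X_H$ acquires three cusps, and Riemann--Hurwitz returns $g_H=g_{H_0}/2=(n-2)/4=[n/4]$.

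The main obstacle is this final parity case analysis: whether two cusps of $X_{H_0}$ fuse into one cusp of $X_H$ or remain distinct (acquiring ramification) depends on delicate information about the image of $H$ in $\bar\Gamma_0(2)/\bar\Gamma_0(2)'$, and this information flips between $n\equiv 0$ and $n\equiv 2\pmod 4$. Both regimes must be seen to yield the uniform answer $[n/4]$.
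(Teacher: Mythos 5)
Your overall route is the paper's: use diagram (\ref{diagcover}), show $g_{H_2}=0$ so that the degree-$2$ map $X_H\to X_{H_2}$ makes $X_H$ hyperelliptic, then run Riemann--Hurwitz on $X_{H_0}\to X_H$ with ramification confined to the cusps of Lemma \ref{lemcuspeven}. Two execution differences are worth noting. First, your Riemann--Hurwitz computation for $X_{H_1}\to X_{H_2}$ is correct (both cusps ramify, the $n$ elliptic points lie unramified over the $n/2$ elliptic points), but it is more than is needed: since $X_{H_1}\cong\PP^1$ by Theorem \ref{thmhyperell} and $X_{H_1}\to X_{H_2}$ is surjective, $g_{H_2}=0$ follows at once from Riemann--Hurwitz (or L\"uroth) with no ramification analysis, which is all the paper uses. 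Second, for the cover $X_{H_0}\to X_H$ you determine the number $f$ of ramified cusps explicitly by the parity of $n/2$ (your parity criterion via the image $(n/2,1)$ of $\bar{U}^{n/2}\bar{V}$ in $\bar{\Gamma}_0(2)/\bar{\Gamma}_0(2)'$, and the alternative deck representative $\bar{T}^{n/2}$ when $n\equiv 2\pmod 4$, are correct), whereas the paper sidesteps the case split entirely: it only observes $f\in\{0,2\}$ and lets integrality of $g_H=(n-f)/4$ force the value, giving $[n/4]$ uniformly. Your analysis buys an explicit description of the cusps of $X_H$; the paper's trick buys brevity.

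There is, however, one genuinely faulty justification. You write that ``since $H_0\subseteq\bar{\Gamma}(2)$ is torsion-free, only the four cusps can ramify'' in $X_{H_0}\to X_H$. Torsion-freeness of the \emph{upstairs} group does not preclude elliptic ramification: what matters is whether the \emph{larger} group $H$ contains elliptic elements, since a fixed point of an involution in $H\setminus H_0$ produces an interior ramification point of $X_{H_0}\to X_H$ even though $H_0$ is free. Indeed the cover $X_{H_0}\to X_{H_1}$ in the proof of Theorem \ref{thmhyperell} is exactly such a situation: $H_0$ is torsion-free, yet there are $n$ elliptic ramification points. The step you need is that in Case C the group $H=\ker\chi$ contains no involutions (hence is free): every involution of $\bar{\Gamma}_0(2)$ is conjugate to $\bar{V}$ (Lemma \ref{leminvolutions}), and $\chi(\bar{V})=-1$, so no involution lies in $\ker\chi$. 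This is precisely the point the paper makes explicit, and with it your argument is complete; note also that your ``unramified'' conclusion when $4\mid n$ and your count of two ramified cusps when $n\equiv 2\pmod 4$ both rely on this fact, since otherwise hidden elliptic ramification would spoil the Riemann--Hurwitz count.
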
 
\begin{pf} (Sketch).\ Refer to (\ref{diagcover}) for the picture in this case.\  Since $X_{H_1}$ has genus $0$ by Theorem \ref{thmhyperell}, the same is true of 
$X_{H_2}$.\ Therefore,
$X_H$ is hyperelliptic.\ $X_{H_0}$ has $4$ cusps $\{\infty, 0, 1, 1/2\}$, $H=H_0\langle \bar{U}^{n/2}\bar{V}\rangle$, and $\bar{U}^{n/2}\bar{V}$ exchanges
the cusps $\{0, 1\}$.\ $H$ contains \emph{no} involutions, hence it is free, so the only ramification in the covering $X_{H_0}\rightarrow X_H$ occurs at 
the cusps, and there are $f\df0$ or $2$ ramified cusps.\ Now  Riemann-Hurwitz yields
\[
4-n = 2(2-2g_H) - f\Rightarrow g_H= (n-f)/4 = [n/4].
\]
\end{pf}

We end this section by describing algebraic equations for the hyperelliptic curves discussed in Theorem \ref{thmhyperell}. Let $\rho = \Ind_{\bar \Gamma_0(2)}^\Gamma \chi$ be a representation as above, and assume that we are in Case A, B or D of Lemma \ref{lemABCD}. The diagram (\ref{covdiag1}) applies, and the discussion of the previous section tells us that the curve $X_0(2)$ has two cusps $\{0,\infty\}$ and similarly for $X_{H_1}$. The bottom map in (\ref{covdiag1}) is ramified only at the cusps, and it is of degree $n$. Both are curves of genus $0$. A hauptmodul for $\Gamma_0(2)$ is given by
\[
  K_0(\tau) = \frac{\eta(\tau)^{24}}{\eta(2\tau)^{24}},
\]
and one can use the transformation properties of the Dedekind eta function $\eta$, and the factorization formula for $\eta$, to verify that $K_0(0) = 0$ and $K_0(\infty) = \infty$. It follows that $\sqrt[n]{K_0}$ is a hauptmodul for $H_1$.\footnote{Compare this result with Theorem \ref{thmKlvl1}.}

In order to find an algebraic equation for the hyperelliptic curve $X_{H_0}$ we must understand the ramification of the projection map $X_{H_0} \to X_{H_1}$ of degree two. If $n$ is odd then this map is ramified at the cusp $\infty$ and at the elliptic points of $H_1$, while if $n$ is even then this map is only ramified at the elliptic points. We have seen that the set of elliptic points is equal to the following set:
\[
\left\{U^{-j}\left(\frac{1+i}{2}\right) \mid 0\leq j \leq n-1\right\}.
\]
Since $K_0$ is invariant under $\Gamma$, it takes the same value on all of these elliptic points. Further, since the powers of $\bar U$ give a set of coset representatives for $H_1\backslash\bar \Gamma_0(2)$, it follows that the values $\sqrt[n]{K_0}(U^{-j}(1+i)/2))$ are the $n$ distinct $n$th roots of $K_0((1+i)/2)$ and $y^2 = x^n - K_0((1+i)/2)$ is an affine algebraic equation for the hyperelliptic curve $X_{H_0} \to X_{H_1}$. By the theory of complex multiplication, one knows that $K_0((1+i)/2)$ is a rational number. In fact, one can use the Chowla-Selberg formula to show that $K_0((1+i)/2) = -64$. This shows that in cases A, B and D, the curve $X_{H_0}$ can be described by the affine hyperelliptic equation $y^2 = x^n+64$.\ We summarize some of our conclusions in
\begin{thm}\label{thmhypereqn} Assume that one of Cases A, B or D of Lemma \ref{lemABCD} holds.\ Then $X_{H_1}$ has genus $0$, and the double cover
$X_{H_0}\rightarrow X_{H_1}$ has genus $\left[\frac{n-1}{2}\right]$ and is described by the affine hyperelliptic equation $y^2 = x^n+64$. $\hfill \Box$
\end{thm}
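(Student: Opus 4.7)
The plan is to combine the genus computations already in place with an explicit construction of hauptmoduls on the two genus-zero base curves, then read off the hyperelliptic equation from the branch behavior of $X_{H_0}\to X_{H_1}$. The genus assertions are immediate: $X_{H_1}$ has genus $0$ by Theorem \ref{thmhyperell}, while $X_{H_0}$ has genus $(n-1)/2$ if $n$ is odd (Lemma \ref{lemcuspodd}) and $(n-2)/2$ if $n$ is even (Lemma \ref{lemcuspeven}); both expressions agree with $[(n-1)/2]$.

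Next I would construct hauptmoduls. For $X_0(2)$ the standard choice is $K_0(\tau)=\eta(\tau)^{24}/\eta(2\tau)^{24}$; the product expansion and the transformation law of $\eta$ show $K_0(\infty)=\infty$ and $K_0(0)=0$. The covering $X_{H_1}\to X_0(2)$ in diagram (\ref{covdiag1}) has degree $n$ and, because both curves have two cusps and the map is unramified outside the cusps (Lemma \ref{lemH0index}(ii) forces total ramification above $\infty$ and similarly above $0$), it is totally ramified at each cusp. Hence the pullback of $K_0$ is an $n$-th power, and $x\df\sqrt[n]{K_0}$ (with the branch chosen so $x=q+O(q^2)$) is a hauptmodul on $X_{H_1}$.

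Now for the double cover $X_{H_0}\to X_{H_1}$. In the odd case the map is ramified at the preimages of the elliptic points of $H_1$ together with the cusp $\infty$; in the even case only at the elliptic points. In either case, by Lemma \ref{lemABDellpts} the elliptic points of $H_1$ are represented by $\bar U^{-j}\!\left(\tfrac{1+i}{2}\right)$, $0\le j\le n-1$. Since $K_0$ is $\Gamma_0(2)$-invariant and each $\bar U^{-j}$ lies in $\Gamma_0(2)$, the function $K_0$ takes the same value $c\df K_0((1+i)/2)$ at all $n$ elliptic points; hence $x=\sqrt[n]{K_0}$ takes the $n$ distinct values $\zeta_n^j\,c^{1/n}$ there. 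These are precisely the roots of $x^n-c$, so the branch divisor of $X_{H_0}\to X_{H_1}$ (away from cusps) is cut out by $x^n=c$, and a defining affine equation is $y^2=x^n-c$.

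The remaining step, and the main substantive point, is to evaluate $c=K_0((1+i)/2)=-64$. The argument $(1+i)/2$ is the $\bar V$-fixed point and is $\SL_2(\ZZ)$-equivalent to the CM point $i$, so by complex multiplication $c$ is a rational number. I would compute it via the Chowla--Selberg formula (or, equivalently, by exploiting the Fricke involution $w_2$: under $\tau\mapsto -1/(2\tau)$ one has $K_0\mapsto 2^{12}/K_0$, and $(1+i)/2$ is a fixed point of $w_2$, forcing $c^2=2^{12}$, i.e.\ $c=\pm 64$; the sign follows from a short numerical or $q$-expansion check). This identifies $c=-64$, yielding the stated equation $y^2=x^n+64$, and the only delicate part of the argument is this value, which relies on CM theory. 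The verification that the resulting affine curve really is birational to $X_{H_0}$ is then automatic since we have matched the degree, the genus, and the ramification data.
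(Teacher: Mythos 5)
Your proposal is correct and follows essentially the same route as the paper: the hauptmodul $K_0=\eta(\tau)^{24}/\eta(2\tau)^{24}$, the observation that $\sqrt[n]{K_0}$ is a hauptmodul for $H_1$, the identification of the branch locus of $X_{H_0}\to X_{H_1}$ with the $n$ elliptic points (plus the cusp $\infty$ when $n$ is odd) all lying over $K_0\bigl(\tfrac{1+i}{2}\bigr)$, and the evaluation $K_0\bigl(\tfrac{1+i}{2}\bigr)=-64$ via CM/Chowla--Selberg. Your alternative Fricke-involution evaluation is a nice elementary substitute, with the small correction that $\tfrac{1+i}{2}$ is fixed by $w_2$ only modulo $\bar\Gamma_0(2)$ (indeed $w_2\cdot\tfrac{1+i}{2}=\tfrac{1+i}{2}-1$), which still gives $K_0(w_2\tau_0)=K_0(\tau_0)$ and hence $c^2=2^{12}$.
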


\begin{rmk}
Ultimately we will only prove ASD-style congruences for modular forms arising in Case A, so we have not determined what hyperelliptic curves arise in Theorem \ref{t:casec}. 
\end{rmk}

\section{Vector-valued modular forms of minimal weight}
\label{s:newvvmfs}
Section 4.3 of \cite{FM2} explains how generalized hypergeometric series and the free-module theorem of \cite{MM} allow one to describe the module of vector-valued modular forms associated to an irreducible three-dimensional representation of $\Gamma$.\ The answer is expressed in terms of the exponents of the eigenvalues of $\rho(\bar T)$. 

Let $\rho=\Ind_{\bar{\Gamma}_0(2)}^{\bar{\Gamma}}\chi$ be as in Theorem \ref{thmrho1}, and write $\chi(\bar U) = e^{2\pi i r/n} = \lambda$ where $0< r < n$ and $\gcd(r,n) = 1$. Let $\rho'$ be a representation of $\Gamma$ equivalent with $\rho$ such that
\begin{eqnarray*}
\rho'(\bar T) = \diag(\veps \lambda,\sigma,-\sigma),
\end{eqnarray*}
where $\sigma^2 = \bar \lambda$. Define $\be(z) = e^{2\pi i z}$. Then we have the following cases: 
\[
\rho'(\bar T) = \begin{cases}
\be(\diag(\frac{2r+n}{2n},\frac{n-r}{2n},\frac{2n-r}{2n})) & \textrm{Cases A and C},\\
\be(\diag(\frac{2r}{2n},\frac{n-r}{2n},\frac{2n-r}{2n})) & \textrm{Cases B and D}.
\end{cases}
\]
Not all of the exponents above necessarily lie between $0$ and $1$.\ Since we've chosen $r$ to satisfy $0 < r < n$, it follows that $r/n$, $(n-r)/(2n)$ and$(2n-r)/(2n)$ all lie in $[0,1)$.\ However, if $r \geq n/2$ then $(2r+n)/(2n)$ does not lie between $0$ and $1$.\ In this case we replace the exponent by $(2r-n)/2n$.\ If we let $r_1$, $r_2$ and $r_3$ denote these normalized exponents, then it's known (e.g., Lemma 2.3 of \cite{Marks1})
that the minimal weight $k_0$ for $\rho$ satisfies $k_0 = 4(r_1+r_2+r_3)-2$.\ Note that in all cases, if $r$ is chosen as above then $r_2 = (n-r)/(2n)$, $r_3 = (2n-r)/(2n)$, and the following table summarizes the possibilities for $r_1$ and $k_0$:
\begin{center}
\begin{tabular}{r|c|c}
&$r_1$&$k_0$ \\
\hline
Cases A,C and $r \geq n/2$ &$\frac{2r-n}{2n}$&$2$\\
Cases A,C and $r < n/2$ &$\frac{2r+n}{2n}$&$6$\\
Cases B,D &$\frac{r}{n}$&$4$
\end{tabular}
\end{center}
The value of $r_1$ can be written uniformly as follows: set
\[
  e = \begin{cases}
-1 & \textrm{ in Cases A,C and } r \geq n/2,\\
0 & \textrm{ in Cases B,D,}\\
1 & \textrm{ in Cases A,C and } r < n/2.
\end{cases}
\]
then $r_1 = (2r+en)/(2n)$ and $k_0 = 2e+4$.\ The exponent differences satisfy:
\begin{align*}
r_1-r_2 &= \frac{3r+(e-1)n}{2n},&r_1-r_3 &= \frac{3r+(e-2)n}{2n}, & r_2-r_3 &= -\frac12.
\end{align*}
Thus, the only way that one of these differences can be an integer is if $n{\mid}3$.\ By Lemma \ref{lemred} these are precisely the cases where $\rho$ is reducible. Assume that $n \nmid 3$ and write 
\begin{align*}
  a&= r_1-\frac{k_0}{12}, & b &= r_2-\frac{k_0}{12},& c &= r_3-\frac{k_0}{12}.
\end{align*}
Then a basis (over the ring of classical scalar modular forms of level $1$) for the module of vector-valued modular forms for $\rho'$ is given by $(F, DF,D^2F)$, where $D = q\frac{d}{dq} - \frac{k}{12}E_2$ denotes the modular derivative of weight $k$, and where $F$ is the vector-valued modular form 
\begin{eqnarray}\label{Fdef}
F = \eta^{2k_0}\left(\begin{matrix}\ K^{a}\ _{3}F_2\left(a, a+\frac 13, a+\frac 23; a-b+1, a-c+1; K\right)\\K^{b}\ _{3}F_2\left(b, b+\frac 13, b+\frac 23; b-a+1, b-c+1; K\right)\\K^{c}\ _{3}F_2\left(c, c+\frac 13, c+\frac 23; c-a+1, c-b+1; K\right)
\end{matrix}\right).
\end{eqnarray}
Here $K = 1728j^{-1}$ and $j$ denotes the usual $j$-invariant of elliptic curves. Our first result on $F$ is the following.
\begin{lem}\label{lemcongcase} The following are equivalent.
\begin{enumerate}
\item[(i)] \emph{At least one} of the components of $F$ is a modular form on a congruence subgroup,
\item[(ii)] \emph{all} of the components of $F$ are modular forms on a congruence subgroup,
\item[(iii)] $\ker \rho'$ is a congruence subgroup,
\item[(iv)]$n{\mid}24$.
\end{enumerate}
In particular, all components of $F$ have \emph{bounded denominators} if any of these conditions holds.
\end{lem}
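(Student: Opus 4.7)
The plan is to reduce everything to the heart of the matter, $(\text{i})\Rightarrow(\text{iii})$. Indeed $(\text{iii})\Leftrightarrow(\text{iv})$ is immediate from Theorem \ref{thmKlvl1} since $\rho'$ is equivalent to $\rho$ and hence $\ker\rho'=\ker\rho$; the implication $(\text{iii})\Rightarrow(\text{ii})$ holds because each component $F_j$ is by construction fixed under the slash-$k$ action of $\ker\rho'$; and $(\text{ii})\Rightarrow(\text{i})$ is a logical weakening.

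For $(\text{i})\Rightarrow(\text{iii})$ my plan is as follows. Let $H_i\df\{\gamma\in\Gamma\mid F_i|_k\gamma=F_i\}$ be the slash-$k$ stabilizer, and interpret hypothesis (i) as saying that $H_i$ contains a congruence subgroup for some $i$; since any finite-index overgroup of a congruence subgroup is congruence, $H_i$ itself is congruence. To propagate this from one component to all three, I would invoke Burnside's theorem: under the standing assumption $n\nmid 3$, Lemma \ref{lemred} says $\rho'$ is irreducible, whence $\rho'(\Gamma)$ spans $M_3(\CC)$. I can then choose $\gamma_1,\gamma_2,\gamma_3\in\Gamma$ whose images' $i$-th rows form a basis of $\CC^3$. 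Combining the transformation law
\[
F_i|_k\gamma_m = \sum_{j=1}^{3} \rho'(\gamma_m)_{ij}F_j
\]
with the linear independence of $F_1,F_2,F_3$ (their leading $q$-exponents $a,b,c$ are pairwise distinct modulo $\ZZ$ by the discussion just preceding the Lemma, again using $n\nmid 3$) shows that $F_i|_k\gamma_1,F_i|_k\gamma_2,F_i|_k\gamma_3$ form an alternative basis of $\mathrm{span}_\CC(F_1,F_2,F_3)$. In particular, each $F_j$ is a $\CC$-linear combination of these three translates.

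Since each translate $F_i|_k\gamma_m$ is modular on the conjugate congruence subgroup $\gamma_m^{-1}H_i\gamma_m$, the finite intersection $G\df\bigcap_{m=1}^{3}\gamma_m^{-1}H_i\gamma_m$ is congruence and fixes every $F_j$. Hence $\rho'(\gamma)F=F$ for all $\gamma\in G$, and a second appeal to the linear independence of the $F_j$ forces $\rho'(\gamma)=I$. Thus $G\subseteq\ker\rho'$, and $\ker\rho'$ is congruence, giving (iii). The bounded-denominators conclusion is then the classical fact that a holomorphic modular form of integral weight on a congruence subgroup with algebraic Fourier coefficients has bounded denominators. The main obstacle I foresee is pedagogical rather than mathematical: one must carefully separate the two distinct uses of linear independence (once to extract three linearly independent translates, and once to deduce $\rho'(\gamma)=I$) and confirm that the interpretation of ``$F_i$ is modular on a congruence subgroup'' coincides with the slash-$k$ stabilizer of $F_i$ being congruence; everything else is bookkeeping.
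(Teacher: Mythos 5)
Your proposal is correct and takes essentially the same route as the paper: (iii)$\Leftrightarrow$(iv) via Theorem \ref{thmKlvl1}, (iii)$\Rightarrow$(ii) from the transformation law, and the main implication via irreducibility (spanning the coordinate space by translates of a single component), the fact that conjugates/overgroups of congruence subgroups are congruence, and the linear independence of $f_1,f_2,f_3$ coming from exponents distinct mod $\ZZ$. The only difference is organizational --- you prove (i)$\Rightarrow$(iii) directly and make explicit the linear-independence step forcing $\rho'(\gamma)=I$, which the paper leaves implicit in its ``(ii) and (iii) are equivalent'' step.
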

\begin{pf} Since $\rho$ and $\rho'$ are equivalent representations then $\rho=B\rho'B^{-1}$ for some invertible matrix $B$, and 
in particular $\ker\rho'=\ker\rho$.\ Then the equivalence of (iii) and (iv) follows from Theorem \ref{thmKlvl1}.\ On the other hand
we have
\begin{eqnarray*}
F|_{k_0}\gamma (\tau) = \rho'(\gamma)F(\tau)\ \ (\gamma \in \Gamma),
\end{eqnarray*}
so it is clear that the coordinates of $F$ are scalar modular forms on the finite index subgroup $\ker\rho'\subseteq \Gamma$.\ Therefore
(ii) and (iii) are equivalent.

Now suppose that some component $f$ of $F$ is a modular form on a congruence subgroup $\bar{\Gamma}(m)$, say, and assume to begin with
that $\rho$ (and therefore also $\rho'$) is \emph{irreducible}.\ Then the space of functions spanned by $f|_{k_0}\gamma\ (\gamma\in\Gamma)$ coincides with the span of
the coordinates of $F$.\ Since each $f|_{k_0}\gamma$ is modular on $\bar{\Gamma}(m)$ then the same is true for the coordinates of $F$.\
So (i)$\Rightarrow$(ii) in this case.\ On the other hand, if $\rho$ is \emph{not} irreducible then $n{\mid}3$ by Lemma \ref{lemred} and (iv) holds.\
The equivalence of (i)-(iv) follows from what we have established, and the Lemma is thus proved.
\end{pf}

In the remainder of this section we prove that when $n$ does \emph{not} divide $24$, all three coordinates of $F$ have unbounded denominators. As the details are largely the same in the three cases, we explain the argument for the first coordinate in full, and we suppress the details for the other two coordinates. 

\begin{rmk}
Chris Marks verified this result earlier in \cite{Marks} using a different but related method, in all but finitely many cases. The exceptional cases 
(there are probably many) were not made explicit in \cite{Marks}.
\end{rmk}

The hypergeometric series that occurs in the first coordinate of $F$ is
\[
\ _{3}F_2\left(a, a+1/3, a+2/3; a-b+1, a-c+1; K\right)=1+\sum_{m\geq 1}C_mj^{-m},
\]
where 
\begin{align*}
C_m &= \frac{12^{3m}}{m!}\cdot\frac{(a)_m(a+1/3)_m(a+2/3)_m}{(a-b+1)_m(a-c+1)_m} &=\  \frac{(e-1)n+3r}{m!}\cdot\left(\frac{2^8}{n}\right)^m\cdot \prod_{i=2m+e}^{3m+e-2}(ni+3r).
\end{align*}
Thus, the first coordinate of $F$ is the following modular form:
\[
f_1 \df \eta^{2k_0}j^{-\frac{(e-1)n+3r}{3n}}\left(1+((e-1)n+3r)\sum_{m\geq 1} \left\{\frac{2^{8m}\prod_{i=2m+e}^{3m+e-2}(in+3r)}{m!n^m}\right\}j^{-m}\right).
\]
Similarly, the second and third coordinates of the vector-valued modular form $F$ are equal to the functions
\begin{align*}
f_2 &\df \eta^{2k_0}j^{\frac {e-1}{6}+\frac r{2n}}\left(1-((e-1)n+3r)\sum_{m\geq 1} \left\{ \frac{2^{6m}\prod_{i=m+1}^{3m-1}((2i+1-e)n-3r)}{(2m)!n^{2m}} \right\}j^{-m}\right),\\
f_3&\df \eta^{2k_0}j^{\frac {e-4}6 + \frac{r}{2n}}\left(1 - ((e-4)n+3r)\sum_{m \geq 1}\left\{\frac{2^{6m}\prod_{i = m}^{3m-1} ((2i+4-e)n-3r)}{(2m+1)!n^{2m}}\right\}j^{-m}\right).
\end{align*}
\begin{thm}
\label{thm:firstcoord} 
Let $\rho = \Ind_{\bar\Gamma_0(2)}^\Gamma\chi$ be as in Theorem \ref{thmrho1}, write $\chi(\bar U) = e^{2\pi i r/n}$ where $0< r < n$ and $\gcd(r,n) = 1$.\ Assume that $n\nmid 3$.\ Let $k_0 = 2$, $4$ or $6$ denote the minimal weight such that there exist nonzero holomorphic vector-valued modular forms for $\rho$, write $k_0 = 2e+4$, and let $f_1$, $f_2$ and $f_3$ be defined as above.\ Write
\begin{align*}
f_1 &=\eta^{2k_0}\cdot\sum_{m \geq 0} \frac{a_m}{n^mm!}q^{m+\frac{r}{n}+\frac{e-1}{3}}, \\
f_2&=\eta^{2k_0}\cdot\sum_{m \geq 0} \frac{b_m}{n^{2m}(2m)!}q^{m+\frac{r}{n}+\frac{e}{3}-1}, \\
f_3 &=\eta^{2k_0}\cdot\sum_{m \geq 0} \frac{c_m}{n^{2m}(2m+1)!}q^{m+\frac{r}{n}+\frac{e}{3}-1}.
\end{align*}
Let $p$ be a prime dividing $n$. If $p\neq 2,3$ then all of $a_m$, $b_m$ and $c_m$ are congruent to $(24r)^m$ mod $p$. Otherwise one has:
\begin{align*}
a_m &\equiv\begin{cases} 
(6r)^m& \pmod{3^{m+1}} \quad  p = 3,~ 3^2 \mid n,\\
8^m& \pmod{2^{3m+1}}\quad  p = 2,~ 2^4 \mid n,
\end{cases} \\
 b_m,c_m &\equiv \begin{cases} 
(3r)^{2m}&\pmod{3^{2m+1}} \quad p = 3,~ 3^2 \mid n,\\
2^{6m} &\pmod{2^{6m+1}}\quad p = 2,~ 2^4 \mid n.
\end{cases}
\end{align*}
Thus if $n\nmid 24$ then $f_1$, $f_2$ and $f_3$ all have unbounded denominators.
\end{thm}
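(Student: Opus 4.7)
The plan is to establish the coefficient congruences for $a_m, b_m, c_m$ by direct analysis of the $q$-expansions, and to deduce the unbounded-denominator conclusion formally. I focus on $f_1$; the arguments for $f_2, f_3$ proceed identically. Set $J := qj = 1 + 744q + 196884q^2 + \cdots \in 1 + q\ZZ[[q]]$ so that $j^{-k} = q^k J^{-k}$. The explicit formula for $f_1$ then rearranges to
\[\sum_{m\ge 0}\frac{a_m}{n^m m!}q^m \;=\; J^{-\alpha}\Bigl(1 + (3r+(e-1)n)\sum_{k\ge 1}\beta_k' q^k J^{-k}\Bigr),\]
where $\alpha = r/n + (e-1)/3$ and $\beta_k' = 2^{8k}\prod_{i=2k+e}^{3k+e-2}(in+3r)/(k!\,n^k)$. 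Expanding each $J^{-(\alpha+k)}$ as a binomial series in $(J-1)$ and extracting $[q^m]$ presents $a_m$ as a finite double sum
\[a_m \;=\; \sum_{\substack{k,\ell\ge 0\\ k+\ell\le m}} T(m,k,\ell),\]
whose summand is an explicit product of the multinomial factor $m!/(k!\ell!)$, the powers $2^{8k}$, $n^{m-k-\ell}$ and $3^{-\ell}$, the products $Q(k)=\prod_{i=2k+e}^{3k+e-2}(in+3r)$ and $P(k,\ell)=\prod_{j=0}^{\ell-1}(3r+(e-1+3(k+j))n)$, and the integer $[q^{m-k}](J-1)^\ell$.

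The uniform target is $a_m\equiv (24r)^m$ modulo the stated prime power in each case. For $p\mid n$ with $p\ge 5$, the factor $n^{m-k-\ell}$ kills every off-diagonal ($k+\ell<m$) summand modulo $p$; on the diagonal $\ell=m-k$ we have $[q^{m-k}](J-1)^{m-k}=744^{m-k}$ and every factor $3r+(\cdots)n\equiv 3r\pmod p$. The binomial theorem collapses the diagonal sum to
\[a_m \equiv (3r)^m\sum_{k=0}^m \binom{m}{k}\,256^k\,(-248)^{m-k} \;=\; (3r)^m(256-248)^m \;=\; (24r)^m \pmod p.\]
For $p=3$ with $9\mid n$, the same collapse on the diagonal produces $(24r)^m\pmod{3^{m+1}}$, which equals $(6r)^m$ modulo $3^{m+1}$ because $v_3(4^m-1)\ge 1$. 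For $p=2$ with $16\mid n$, the contributions $v_2(n)\ge 4$ and $v_2(2^{8k})=8k$ push every diagonal term with $k\ge 1$ into $v_2\ge 3m+5$, so only the $k=0,\ell=m$ term survives; its value is $(-744r)^m=2^{3m}\cdot(\text{odd 2-adic unit})\equiv 8^m\pmod{2^{3m+1}}$, equivalently $(24r)^m\equiv 8^m\pmod{2^{3m+1}}$ since $v_2((3r)^m-1)\ge 1$.

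The analyses for $b_m$ and $c_m$ follow the same template applied to the explicit formulas for $f_2$ and $f_3$: the denominators $(2m)!$ and $(2m+1)!$ and the new products appearing there replace the corresponding objects, and the binomial-theorem collapse yields the claimed targets $(3r)^{2m}$ and $2^{6m}$ after analogous simplification. Given the congruences, the unbounded-denominator statement is immediate: if $n\nmid 24$ then at least one prime $p$ satisfies the hypothesis of the appropriate congruence, so each of $v_p(a_m), v_p(b_m), v_p(c_m)$ grows linearly in $m$ at a rate strictly less than $v_p$ of the denominator $n^m m!$ (respectively $n^{2m}(2m)!$, $n^{2m}(2m+1)!$); the denominator of the $m$-th Fourier coefficient is therefore divisible by an unbounded power of $p$.

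The main obstacle is the uniform off-diagonal bound: for $p=3$ (respectively $p=2$) one must verify that every summand $T(m,k,\ell)$ with $k+\ell<m$ satisfies $v_p(T(m,k,\ell))\ge m+1$ (respectively $\ge 3m+1$). This forces one to control $v_p([q^{m-k}](J-1)^\ell)$ using the $p$-adic valuations of the Fourier coefficients of $qj$, and to combine this estimate with the factors $v_p(n^{m-k-\ell})$, $v_p(m!/(k!\ell!))$, $v_p(2^{8k})$ and the $v_p$ of the $Q$- and $P$-products. Checking this bookkeeping uniformly in $(k,\ell)$ and across all three cases is the technical heart of the argument.
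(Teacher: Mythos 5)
Your strategy is the same as the paper's: write the power of $j^{-1}$ (equivalently of $J=qj$) as a binomial series, multiply by the $_3F_2$ series, extract the coefficient of $q^m$, observe that the $p$-adically dominant contributions are the ``diagonal'' ones in which only the constant term $744$ of the relevant expansion enters, and collapse the diagonal by the binomial theorem to $(3r)^m(256-248)^m=(24r)^m$, with the refinements at $p=2,3$ coming from the extra $2$- and $3$-adic valuations of $248$, $2^{8}$ and $n$. Your $p\ge 5$ case is complete and correct (and in fact handles $p=31$ uniformly, which the paper treats separately), and your identification of the surviving diagonal terms at $p=2$ and $p=3$ agrees with the paper's computation.

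The genuine gap is the step you yourself flag and then defer: the uniform off-diagonal estimate at $p=3$ and $p=2$, i.e.\ that every summand $T(m,k,\ell)$ with $k+\ell<m$ contributes $0$ to $a_m$ modulo $3^{m+1}$ (resp.\ $2^{3m+1}$). This is not mere bookkeeping, because the estimate is \emph{false} if one only uses integrality of the coefficients of $J-1$ (equivalently of $j$). For example, at $p=3$ with $9\mid n$, the term with $\ell=m-1$, $k=0$ in which one factor $744$ is replaced by the next coefficient of $J-1$ loses two powers of $3$ from the denominator $(3n)^{\ell}$ but, a priori, loses the full $v_3(744)=1$ from the numerator as well; the required bound only holds because that coefficient ($196884$, resp.\ $356652$ in the paper's $g$-normalization) happens to be divisible by a suitable power of $3$. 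Similarly at $p=2$ the trade-off between one extra factor of $n$ ($v_2\ge 4$) and the drop from $v_2(744)=3$ to $v_2(196884)=2$ is borderline. So the ``technical heart'' you postpone genuinely requires input on the $2$- and $3$-adic valuations of the Fourier coefficients of $j$ (or an organization of the expansion that makes these divisibilities manifest), and without carrying it out the congruences for $p=2,3$ --- and hence the unbounded-denominator conclusion in the cases $3^2\mid n$ and $2^4\mid n$ --- are not established. (The paper asserts the corresponding ``lower powers of $p$ in the denominator'' claims tersely, but its proof does hinge on exactly this point, so your proposal cannot be counted as complete until that estimate is proved.) The reduction from the congruences to unbounded denominators, and the remark that $f_2,f_3$ are handled the same way, are fine and match the paper.
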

\begin{proof}
We give the details of the proof for $f_1$.\ The other cases are analogous.\ Write $j^{-1}=q(1+qg(q))$ with $g(q)\in \pseries{\ZZ}{q}$ of the form $-744 + 356652q + O(q^2)$.\ Then
\[
j^{-\frac{3r+(e-1)n}{3n}}=q^{\frac{r}{n}+\frac{e}{3}-1}\left(1+\sum_{x\geq 1}\frac{\prod_{i = 0}^{x-1}(3r+(2i+e-1)n)}{(3n)^xx!}(qg(q))^x\right)
\]
and thus
\begin{align*}
\frac{f_1}{\eta^{2k_0}} &= q^{\frac{r}{n}+\frac{e-1}{3}}\left(1+\sum_{x\geq 1}\frac{\prod_{i = 0}^{x-1}(3r+(2i+e-1)n)}{(3n)^xx!}(qg(q))^x\right)\times \\
&\left(1+((e-1)n+3r)\sum_{m\geq 1} \left\{\frac{2^{8m}\prod_{i=2m+e}^{3m+e-2}(in+3r)}{m!n^m}\right\}(1+qg(q))^mq^{m}\right).
\end{align*}
This gives the following $q$-expansion for $f_1\eta^{-2k_0}q^{-\frac rn-\frac {e-1}3}$:
\begin{align*}
&1 + \sum_{x\geq 1}\frac{\prod_{i = 0}^{x-1}(3r+(2i+e-1)n)}{(3n)^xx!}(qg(q))^x+((e-1)n+3r)\times \\
&\left(\sum_{m\geq 1}\sum_{y = 0}^m \binom{m}{y}\left\{\frac{2^{8m}\prod_{i=2m+e}^{3m+e-2}(in+3r)}{m!n^m}\right\} q^{m+y}g(q)^y +\right.\\
&\left.\sum_{x\geq 1}\sum_{m\geq 1}\sum_{y = 0}^m\binom{m}{y}\left\{\frac{2^{8m}\prod_{i = 0}^{x-1}(3r+(2i+e-1)n) \prod_{i=2m+e}^{3m+e-2}(in+3r)}{3^xn^{m+x}m!x!}\right\} q^{m+x+y}g(q)^{x+y}\right).
\end{align*}
Suppose that $p{\mid}n$ and $p\neq 2,3$ or $31$ (the prime divisors of $744$). Then the coefficient of $q^m$ in this expression is a rational number of the form
\begin{align*}
&\frac{(-248)^m\prod_{i = 0}^{m-1}(3r+(2i+e-1)n)}{n^mm!}+((e-1)n+3r)\times \left(\frac{2^{8m}\prod_{i=2m+e}^{3m+e-2}(ni+3r)}{n^mm!} +\right.\\
&\left.\sum_{s =1}^{m-1}\binom{m}{s}\frac{(-248)^{s}2^{8(m-s)}\prod_{i = 0}^{s-1}(3r+(2i+e-1)n)\prod_{i=2(m-s)+e}^{3(m-s)+e-2}(ni+3r)}{n^{m}m!}\right)
\end{align*}
plus terms with lower powers of $p$ in the denominator. This shows that this coefficient is of the form $\frac{a_t}{n^tt!}$ where $a_t$ is an integer congruent mod $p$ to
\[
(-744r)^t+(3r)^t\left(2^{8t} +\sum_{s =1}^{t-1}\binom{t}{s}(-248)^{s}2^{8(t-s)}\right)\equiv (24r)^t\pmod{p}.
\]
This proves the theorem for $p\neq 2,3,31$.

If $p = 31$ then the coefficient of $q^m$ in the expression for $f_1\eta^{-2k_0}q^{-\frac rn-\frac {e-1}3}$ is a rational number of the form $\alpha/n^mm!$, where
\[
\alpha = ((e-3)n+3r)\left(2^{8m}\prod_{i=2m+e}^{3m+e-2}(ni+3r) \right)\equiv (768r)^{m} \equiv (24r)^m \pmod{31},
\]
plus a rational number with lower powers of $31$ in the denominator. This proves the theorem in this case.

Next consider $p = 2$ and suppose that $2^4{\mid}n$. Write $n =2^4\eta$. Then in this case the $q$-expansion that we're interested in is
\begin{align*}
&1 + \sum_{x\geq 1}\frac{31^x\prod_{i = 0}^{x-1}(3r+(2i+e-1)2^4\nu)}{(2\nu)^xx!}(qg'(q))^x+((e-1)2^4\nu+3r)\times \\
&\left(\sum_{m\geq 1}\sum_{y = 0}^m \binom{m}{y}\left\{\frac{(93)^y\prod_{i=2m+e}^{3m+e-2}(2^4i\nu+3r)}{m!\nu^m}2^{4m+3y}\right\}q^{m+y}g'(q)^y +\sum_{x\geq 1}\sum_{m\geq 1}\sum_{y = 0}^m\right.\\
&\left.\binom{m}{y}\frac{(93)^{x+y}2^{3y+4m-x}\prod_{i = 0}^{x-1}(3r+(2i+e-1)2^4\nu)\prod_{i=2m+e}^{3m+e-2}(2^4i\nu+3r)}{3^x\nu^{m+x}m!x!} q^{m}(qg'(q))^{x+y}\right).
\end{align*}
where $g' = g/744$. In this case we see that the coefficient of $q^m$ is of the form
\[
\frac{(-31)^m\prod_{i = 0}^{m-1}(3r+(2i+e-1)2^4\nu)}{(2\nu)^mm!}
\]
plus a rational number with fewer powers of $2$ in its denominator. Thus, the coefficient is of the form $\alpha/(2\nu)^mm!$ where $\alpha$ is an odd integer, and so if we write it in the form $a_m/n^mm!$ we see that $a_m = 2^{3m}\alpha \equiv 2^{3m} \pmod{2^{3m+1}}$.

Finally consider $p = 3$ and suppose that $3^2{\mid}n$. Write $n = 3^2\nu$, so that in this case we're interested in
\begin{align*}
&1 + \sum_{x\geq 1}\frac{248^x\prod_{i = 0}^{x-1}(r+(2i+e-1)3\nu)}{(3\nu)^xx!}(qg'(q))^x+((e-1)3\nu+r)\times \\
&\left(\sum_{m\geq 1}\sum_{y = 0}^m \binom{m}{y}\frac{2^{8m}248^{y}3^y\prod_{i=2m+e}^{3m+e-2}(3i\nu+r)}{m!(3\nu)^m} q^{m+y}g'(q)^y +\sum_{x\geq 1}\sum_{m\geq 1}\sum_{y = 0}^m\right.\\
&\left.\binom{m}{y}\frac{2^{8m}\prod_{i = 0}^{x-1}(r+(2i+e-1)3\nu) \prod_{i=2m+e}^{3m+e-2}(3i\nu+r)248^{x+y}}{3^{m+x-y}\nu^{m+x}m!x!} q^{m}(qg'(q))^{x+y}\right).
\end{align*}
In this case we see that the coefficient of $q^m$ is of the form
\begin{align*}
&\frac{(-248)^m\prod_{i = 0}^{m-1}(r+(2i+e-1)3\nu)}{(3\nu)^mm!}+((e-1)3\nu+r)\times \left(\frac{2^{8m}\prod_{i=2m+e}^{3m+e-2}(3i\nu+r)}{m!(3\nu)^m} \right. \\
&\left. +\sum_{x=1}^{m-1}\binom{m}{x}\frac{2^{8(m-x)}(-248)^{x}\prod_{i = 0}^{x-1}(r+(2i+e-1)3\nu) \prod_{i=2(m-x)+e}^{3(m-x)+e-2}(3i\nu+r)}{(3\nu)^{m}m!}\right).
\end{align*}
plus a rational number with fewer powers of $3$ in the denominator. We thus see that the coefficient is of the form $\alpha/(3\nu)^mm!$ where $\alpha$ is an integer satisfying
\[
  \alpha \equiv r^m\left(1+2^{8m} +\sum_{x=1}^{m-1}\binom{m}{x}2^{8(m-x)}\right) \equiv (2r)^m \pmod{3}
\]
Thus, if we write the coefficient as $a_m/n^mm!$ then $a_m \equiv (6r)^m \pmod{3^{m+1}}$. This concludes the proof of the congruences for $f_1$, and the final claim follows immediately from them.
\end{proof} 

\section{Unbounded denominators in the general case}
In this Section, we consider the question of unbounded denominators for vector-valued modular forms of \emph{arbitrary weight}
associated with the induced representation $\rho$ (cf.\ Proposition \ref{thmrho1}).\ Stated in terms of modular forms on
$\bar{\Gamma}_0(2)$, the main result is as follows.
 \begin{thm}\label{thmgenubd} Let $f\in M_k(\bar{\Gamma}_0(2), \chi)$ be a nonzero holomorphic modular form, where $\chi$ is as in (\ref{chidef}).\
 Suppose that $f$ has algebraic Fourier coefficients.\  If $p$ is a prime
 dividing $n$, the powers of $p$ that divide the denominators of $f$
 are \emph{unbounded} under any of the following circumstances: $p\geq 5$; $p=3$ and $p^2{\mid}n$; $p=2$ and $p^4{\mid}n$.\ In particular, the following are equivalent:
\begin{enumerate}
\item[(a)] $f$ has \emph{bounded denominators}; 
\item[(b)] $n{\mid}24$; 
\item[(c)] $f$ is a congruence modular form.\ 
\end{enumerate}
\end{thm}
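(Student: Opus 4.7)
The plan is to reduce Theorem \ref{thmgenubd} to the minimal-weight assertion already established in Theorem \ref{thm:firstcoord}, via the $M_*(\Gamma)$-module structure of $\cM(\chi):=\bigoplus_k M_k(\bar{\Gamma}_0(2),\chi)$. We may harmlessly assume $n\nmid 3$, for otherwise $n\mid 24$ and all three of (a), (b), (c) hold vacuously while the prime-divisibility hypothesis is empty. Under $n\nmid 3$, Lemma \ref{lemred} gives $\rho$ irreducible, so the Marks--Mason free-module theorem applied to $\cH(\rho')$ (as exploited in Section \ref{s:newvvmfs}) identifies it as a free rank-$3$ module over $M_*(\Gamma)$ with basis $\{F, DF, D^2F\}$. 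Transporting this structure along the $M_*(\Gamma)$-linear identifications $\cM(\chi)\cong\cH(\rho)\cong\cH(\rho')$ (the first given by $f\mapsto (f|_k\gamma_i)_i$, the second by $\tilde F\mapsto B^{-1}\tilde F$), and inspecting the intertwiner $B$ of Proposition \ref{thmrho1} (whose first column is $e_1$, because $\CC e_1$ is the $\chi$-isotypic line stabilized by $\bar{\Gamma}_0(2)$), we see that the first component of $BF$ is the first coordinate $f_1$ of $F$. Hence $\cM(\chi)$ is free of rank $3$ over $M_*(\Gamma)$ with basis $\{\varphi_0, D\varphi_0, D^2\varphi_0\}$, where $\varphi_0:=f_1$.

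Given a nonzero $f\in M_k(\bar{\Gamma}_0(2),\chi)$ with algebraic Fourier coefficients, expand uniquely $f = g_1\varphi_0+g_2 D\varphi_0+g_3 D^2\varphi_0$ with $g_j\in M_{k-k_0-2(j-1)}(\Gamma)$. Applying $D$ twice and eliminating $D^3\varphi_0$ via the Fuchsian equation $D^3\varphi_0 = \alpha_0\varphi_0+\alpha_1 D\varphi_0+\alpha_2 D^2\varphi_0$ (whose coefficients $\alpha_j$ are level-$1$ modular forms extracted from the $_3F_2$-data of Section \ref{s:newvvmfs}) yields a $3\times 3$ matrix relation
\[
(f,\; Df,\; D^2 f)^{T} \;=\; A\cdot(\varphi_0,\; D\varphi_0,\; D^2\varphi_0)^{T},
\]
with $A$ a matrix of level-$1$ modular forms and $\det A$ a nonzero scalar modular form on $\Gamma$ (nonzero because $f\neq 0$ and the basis is free). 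Cramer's rule then expresses $\varphi_0$ as $(\det A)^{-1}$ times a level-$1$ linear combination of $f, Df, D^2 f$. Since the modular derivative preserves bounded $p$-denominators (up to a harmless factor of $12$) and level-$1$ forms have $p$-integral Fourier coefficients, if $f$ had bounded $p$-denominators then so would $\varphi_0=f_1$, contradicting Theorem \ref{thm:firstcoord}.

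The equivalence of (a), (b), (c) then follows at once: (a)$\Rightarrow$(b) is the contrapositive of what has just been shown, since the listed prime conditions cover exactly the cases $n\nmid 24$; (b)$\Rightarrow$(c) follows from Theorem \ref{thmKlvl1}, as $\ker\chi\supseteq\ker\rho$ inherits the congruence property of $\ker\rho$ when $n\mid 24$; and (c)$\Rightarrow$(a) is the classical theorem that congruence modular forms with algebraic Fourier coefficients have bounded denominators. The main obstacle I anticipate is the denominator descent in the Cramer step: one must verify that $(\det A)^{-1}$ does not itself introduce new $p$-denominators. One route is to identify $\det A$, up to a power of $\eta$, with an explicit polynomial in $E_4, E_6$ obtained from the Wronskian of the hypergeometric basis of $\cH(\rho')$, whose leading $q$-coefficient is manifestly a $p$-adic unit. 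A possibly more robust alternative bypasses Cramer entirely and analyzes the $p$-adic valuations of the Fourier coefficients of $D^i\varphi_0$ directly, by extending the calculations underlying Theorem \ref{thm:firstcoord}, to show that no nontrivial $M_*(\Gamma)$-combination of $\varphi_0, D\varphi_0, D^2\varphi_0$ can have bounded $p$-denominators.
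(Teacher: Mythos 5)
Your overall strategy --- reduce to the minimal-weight form $\varphi_0=f_1$ via the free-module structure of $\cH(\rho)$ over $M_*(\Gamma)$ --- is in the same spirit as the paper, but the descent step contains a genuine gap. Inverting your relation $(f,Df,D^2f)^{T}=A\,(\varphi_0,D\varphi_0,D^2\varphi_0)^{T}$ requires dividing by $\det A$, and $\det A$ depends on $f$: writing $F'=g_1F_0+g_2DF_0+g_3D^2F_0$ for the vector-valued form attached to $f$, one has $\det A=W(F',DF',D^2F')/W(F_0,DF_0,D^2F_0)$; the denominator is (up to a nonzero constant) a power of $\eta$, but the numerator is an essentially arbitrary level-one form built from the $g_j$ and their derivatives. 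There is no a priori control on the $p$-adic valuation of its leading coefficient, nor on the $p$-integrality of its normalized $q$-expansion, and inverting a $q$-series whose coefficients merely have \emph{bounded} denominators can create unbounded ones (compare $(1-q/p)^{-1}=\sum_k p^{-k}q^k$). So your first proposed remedy rests on a misidentification: $\det A$ is not, up to $\eta$-powers, the ($f$-independent) Wronskian of the hypergeometric basis, and its leading coefficient is not ``manifestly a $p$-adic unit.'' Your second remedy is circular: by freeness, \emph{every} algebraic-coefficient form in your $\cM(\chi)$ is an $M_*(\Gamma)$-combination of $\varphi_0,D\varphi_0,D^2\varphi_0$, so ``show no nontrivial combination has bounded $p$-denominators'' merely restates the theorem. (Also, $\det A\neq 0$ does not follow just from ``$f\neq0$ and the basis is free''; one needs, e.g., that a dependence of $f,Df,D^2f$ over the fraction field would place the $3$-dimensional span of the $\Gamma$-translates of $f$ inside the solution space of an MLDE of order at most $2$, contradicting irreducibility --- this point is repairable, but it is not automatic.)

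The paper supplies exactly the missing mechanism. It considers $I\subseteq\cH_{\bar\QQ}(\rho)$, the set of vector-valued forms violating the unbounded-denominator conclusion, observes that $I$ is a module over the ring $\frakM_{\bar\QQ}(D)$ of modular differential operators, and invokes the structural fact $(\star)$ from \cite{FM1}: any nonzero such submodule contains $\Delta^{r}\cH_{\bar\QQ}(\rho)$ for some $r$. If $I\neq 0$, then $\Delta^{r}F_0\in I$; since $\Delta^{\pm 1}$ has integral $q$-expansion with unit leading coefficient, dividing by $\Delta^{r}$ is $p$-adically harmless, so some component of $F_0$ would have bounded $p$-denominators, contradicting Lemma \ref{lempF0} (i.e.\ Theorem \ref{thm:firstcoord}). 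Hence $I=0$ (Theorem \ref{thmvvmfdim3}), and the scalar statement follows because any $f\in M_k(\bar\Gamma_0(2),\chi)$ with algebraic coefficients is the first component of some $F'\in\cH_{\bar\QQ}(k,\rho)$. In short, the only division the paper ever performs is by a power of $\Delta$, precisely to avoid the uncontrollable division by $\det A$ on which your argument founders; to salvage your route you would need a structural input of the strength of $(\star)$ in any case. Your concluding equivalences (a)--(c) are fine once the main assertion is in place.
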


By Theorem \ref{thm:firstcoord}, the conclusions of the Theorem hold if, in place of $f$, we take one of the components $f_1, f_2$ or $f_3$ of the vector-valued modular form $F$ that we studied in the previous Section.\ The idea of the proof of Theorem \ref{thmgenubd} is to transfer this special result for $F$ into a general result about vector-valued modular forms.\ The methods for doing this are essentially already in the literature (\cite{M2}, \cite{MM}, \cite{FM1}), though they are not stated in the form that we need here.\  We will therefore give some of the details.\ 

First we must revert to the original induced representation $\rho$ described in Proposition \ref{thmrho1} from the equivalent representation
$\rho'$  used in Section \ref{s:newvvmfs}. We always assume that $n$ does \emph{not} divide $3$, so that $\rho$ is irreducible (Lemma \ref{lemred}).\ Let 
\begin{eqnarray*}
\mathcal{H}(\rho) = \oplus_{k\geq k_0} \mathcal{H}(k, \rho)
\end{eqnarray*}
be the $\ZZ$-graded space of vector-valued modular forms associated with $\rho$.\ Similarly, we have $\mathcal{H}(\rho')$.\ If $B$ is an invertible matrix that intertwines $\rho$ and $\rho'$, i.e., $B\rho'B^{-1}=\rho$, then the map $F\mapsto BF$ induces an isomorphism of $\ZZ$-graded spaces $\mathcal{H}(\rho')\stackrel{\cong}{\longrightarrow} \mathcal{H}(\rho)$.\
We have already pointed out that
the minimal weight of a nonzero form in $\mathcal{H}(\rho')$ is $k_0=2, 4$ or $6$, so the same is true for $\mathcal{H}(\rho)$.\
We take the matrix $B$ to be
\begin{eqnarray*}
B\df \left(\begin{matrix} 1 & 0 & 0 \\0 & 1 & 1 \\0 & \epsilon\sigma & -\epsilon\sigma\end{matrix}\right),
\end{eqnarray*}
so that $B\rho'(\bar{T})B^{-1} = \rho(\bar{T})$.\ Because 
$F\df\left(\begin{smallmatrix}f_1 \\ f_2 \\ f_3\end{smallmatrix}\right)$ satisfies $F\in\mathcal{H}(k_0, \rho')$ then
$BF\in\mathcal{H}(k_0, \rho)$, and 
$BF =\left(\begin{smallmatrix} f_1 \\ f_2+f_3 \\ \epsilon\sigma(f_2-f_3)\end{smallmatrix}\right)$.\ From Theorem \ref{thm:firstcoord} we immediately deduce
\begin{lem}
\label{lempF0} 
Let $F_0\df BF$ be as above.\ Then $F_0$ has algebraic Fourier coefficients.\ Moreover, if $p$ is a prime dividing $n$, the powers of $p$ that divide the denominators of the Fourier coefficients of each of the components of $F_0$ are \emph{unbounded} under any of the following circumstances: $p\geq 5$; $p=3$ and $p^2 \mid n$; $p=2$ and $p^4\mid n$. $\hfill \Box$
\end{lem}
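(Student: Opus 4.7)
The plan is to write out $F_0=BF$ componentwise: from the explicit matrix $B$, the three components of $F_0$ are $f_1$, $f_2+f_3$, and $\epsilon\sigma(f_2-f_3)$. Algebraicity of the Fourier coefficients of $F_0$ is immediate, since the $q$-expansions of $f_1,f_2,f_3$ in Theorem \ref{thm:firstcoord} have rational coefficients and $\sigma$ is algebraic. The first component of $F_0$ is $f_1$, whose unboundedness is exactly the content of Theorem \ref{thm:firstcoord}. What remains is to prove the corresponding statement for $f_2\pm f_3$.

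To this end, combining the series of Theorem \ref{thm:firstcoord} over a common denominator yields
\[
f_2\pm f_3 \;=\; \eta^{2k_0}\sum_{m\geq 0}\frac{(2m+1)b_m\pm c_m}{(2m+1)!\,n^{2m}}\,q^{m+\frac{r}{n}+\frac{e}{3}-1}.
\]
Because $\eta^{2k_0}/q^{k_0/12}$ is a unit in $\Zp[[q]]$ (it lies in $1+q\ZZ[[q]]$), multiplication by it preserves the property of having bounded or unbounded $p$-adic denominators, so it suffices to estimate $v_p$ of the rational coefficients $\frac{(2m+1)b_m\pm c_m}{(2m+1)!\,n^{2m}}$. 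For $p\geq 5$ with $p\mid n$, the congruences $b_m\equiv c_m\equiv (24r)^m\pmod p$ give
\[
(2m+1)b_m + c_m \equiv (2m+2)(24r)^m,\qquad (2m+1)b_m - c_m \equiv 2m(24r)^m \pmod p;
\]
as $\gcd(24r,p)=1$, both are $p$-adic units along infinite subsequences of $m$, while $v_p(n^{2m}(2m+1)!)\geq 2m$ grows without bound. The case $p=3$ with $9\mid n$ is analogous: writing $b_m=(3r)^{2m}+3^{2m+1}u_m$ and likewise for $c_m$, the leading factors $(2m+2)r^{2m}$ and $2m\cdot r^{2m}$ are units mod $3$ along infinite arithmetic progressions of $m$, pinning $v_3((2m+1)b_m\pm c_m)=2m$ exactly, after which $v_3(n^{2m})\geq 4m$ yields unboundedness.

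The hardest case is $p=2$ with $16\mid n$, where the congruence $b_m,c_m\equiv 2^{6m}\pmod{2^{6m+1}}$ only determines the first non-leading $2$-adic digit. Writing $b_m=2^{6m}(1+2u_m)$ and $c_m=2^{6m}(1+2v_m)$ for integers $u_m,v_m$, a direct computation shows
\[
\bigl((2m+1)b_m+c_m\bigr)+\bigl((2m+1)b_m-c_m\bigr) = 2(2m+1)b_m,\qquad \bigl((2m+1)b_m+c_m\bigr)-\bigl((2m+1)b_m-c_m\bigr) = 2c_m,
\]
both of which have $v_2$ exactly equal to $6m+1$. A brief parity argument then forces $v_2((2m+1)b_m+c_m)\neq v_2((2m+1)b_m-c_m)$ for every $m$, so exactly one of these valuations equals $6m+1$ while the other is at least $6m+2$. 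Combined with $v_2(n^{2m}(2m+1)!)\geq 8m$, this already shows that at least one of $f_2\pm f_3$ has unbounded $p$-adic denominators. To obtain unboundedness for \emph{both} components one must return to the explicit hypergeometric product formulas underlying Theorem \ref{thm:firstcoord} and verify that each of the two congruences $u_m+v_m\equiv m\pmod{2}$ and $u_m+v_m\equiv m+1\pmod{2}$ holds for infinitely many $m$; this last step is the main technical obstacle, as it goes strictly beyond the mod-$2^{6m+1}$ information already recorded in Theorem \ref{thm:firstcoord}.
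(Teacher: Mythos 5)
Your reduction to the three scalar forms $f_1$, $f_2+f_3$, $\epsilon\sigma(f_2-f_3)$, and your handling of algebraicity, of the unit factor $\epsilon\sigma$ (a root of unity), and of multiplication by $\eta^{2k_0}$ are all fine, but there is a genuine gap: in the case $p=2$, $2^4\mid n$ your parity argument only shows that for each $m$ exactly one of the two combined coefficients has $2$-adic valuation $6m+1-v_2\bigl(n^{2m}(2m+1)!\bigr)$, hence only that \emph{at least one} of $f_2\pm f_3$ has unbounded denominators; the claim for \emph{both} components is left resting on an unproved statement about the parities of $u_m+v_m$, which, as you note, cannot be extracted from the congruences recorded in Theorem \ref{thm:firstcoord}. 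So as written the lemma is not established for one of the components of $F_0$ when $p=2$.

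The difficulty is an artifact of treating $f_2$ and $f_3$ as supported on the same powers of $q$. They are not: they are the eigencomponents of $\rho'(\bar T)$ for the \emph{distinct} eigenvalues $\sigma$ and $-\sigma$, so their exponents lie in $r_2+\ZZ$ and $r_3+\ZZ$ with $r_3-r_2=\tfrac12$; equivalently, in the displayed formulas just before Theorem \ref{thm:firstcoord} the $j$-exponents $\tfrac{e-1}{6}+\tfrac{r}{2n}$ and $\tfrac{e-4}{6}+\tfrac{r}{2n}$ differ by $\tfrac12$ (the identical exponent $m+\tfrac rn+\tfrac e3-1$ printed for both $f_2$ and $f_3$ in the statement of Theorem \ref{thm:firstcoord} is a typo, which presumably misled you). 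Consequently the Fourier coefficients of $f_2+f_3$ and of $\epsilon\sigma(f_2-f_3)$ are simply those of $f_2$ and of $f_3$ interleaved (up to the unit $\pm\epsilon\sigma$), no cancellation between the two series is possible, and the unbounded-denominator conclusion for each of $f_2$, $f_3$ in Theorem \ref{thm:firstcoord} transfers verbatim to both components of $F_0$. This is exactly why the paper states Lemma \ref{lempF0} as an immediate consequence with no further argument; your valuation estimates for $p\geq 5$ and $p=3$ are correct under your premise but unnecessary, and with the exponents corrected the $p=2$ obstacle disappears.
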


For a field $E\supseteq \QQ$ we write
\begin{eqnarray}
\label{spvvforms}
\mathcal{H}_{E}(\rho) := \oplus_{k\geq k_0} \mathcal{H}_{E}(k, \rho),
\end{eqnarray}
for the $\ZZ$-graded space of vector-valued modular forms  whose Fourier coefficients lie in $E$.\
Let $\frak{M}_{E}$ be the space of classical modular forms on $\Gamma$ with Fourier coefficients in $E$, and let
\begin{eqnarray*}
D: \mathcal{H}(k, \rho) \rightarrow \mathcal{H}(k+2, \rho)
\end{eqnarray*}
be the usual differential operator $D=D_k: f\mapsto q\frac{df}{dq} -\frac{kE_2}{12}f.$
The ring of differential operators $\frak{M}_{E}(D)$ consists of  (noncommutative) polynomials in $D$ with coefficients in $\frak{M}_E$
and satisfying $Df-fD=D(f)$.\ This ring acts naturally on $\mathcal{H}_{E}(\rho)$, with $\frak{M}_E$ acting componentwise.

 We  now prove
 \begin{thm}\label{thmvvmfdim3} Suppose that  $F\in \mathcal{H}(k, \rho)$ has \emph{algebraic} coefficients.\
 If $p$ is a prime
 dividing $n$, the powers of $p$ that divide the denominators of the Fourier coefficients of each of the components of $F$
 are \emph{unbounded} under any of the following circumstances: $p\geq 5$; $p=3$ and $p^2\mid n$; $p=2$ and $p^4\mid n$.
 \end{thm}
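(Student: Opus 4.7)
The plan is to deduce Theorem~\ref{thmvvmfdim3} from Lemma~\ref{lempF0} by leveraging the free-module structure of $\mathcal{H}(\rho)$. Recall from Section 4.3 of \cite{FM2} (via the Marks--Mason free module theorem) that $\mathcal{H}(\rho)$ is a free module of rank $3$ over the ring $\frakM = \CC[E_4, E_6]$ of level one modular forms, with basis $\{F_0, DF_0, D^2F_0\}$. Consequently every $F \in \mathcal{H}(k, \rho)$ has a unique expansion
$F = g_0 F_0 + g_1 DF_0 + g_2 D^2F_0$ with $g_i \in \frakM$, and whenever $F$ has algebraic Fourier coefficients so do the $g_i$ (after enlarging the coefficient field $E$ if necessary). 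Since level one modular forms with algebraic Fourier coefficients are classical congruence forms, the $g_i$ have bounded $p$-denominators for every prime $p$.

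The argument then proceeds by contradiction. Suppose $F$ has bounded $p$-denominators for a prime $p$ satisfying the hypothesis. The Serre derivative $D = q\frac{d}{dq} - \frac{k}{12}E_2$ involves $E_2 \in \ZZ[[q]]$ and the scalar $1/12$, so $D$ preserves the property of bounded $p$-denominators up to a bounded power of $12$ (irrelevant for $p\geq 5$, and only a finite loss at $p=2,3$). Hence $DF$ and $D^2F$ also have bounded $p$-denominators, and the $\frakM$-submodule $V_F = \frakM F + \frakM DF + \frakM D^2F \subseteq \mathcal{H}(\rho)$ consists entirely of vvmfs with bounded $p$-denominators. Because $F_0/\eta^{2k_0}$ satisfies a Fuchsian equation of order $3$ in $K = 1728 j^{-1}$ (which $F/\eta^{2k}$ also satisfies), the forms $F, DF, D^2F$ are $\frakM$-linearly independent. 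Thus $V_F$ has full rank $3$ in $\mathcal{H}(\rho)$, the quotient $\mathcal{H}(\rho)/V_F$ is a torsion $\frakM$-module, and the annihilator ideal $I \df \{h \in \frakM : hF_0 \in V_F\}$ is nonzero.

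To conclude, one would choose $h \in I$ nonzero with $p$-integral Fourier coefficients such that the leading Fourier coefficient $h_{m_0}$ of $h$ (where $m_0 = v_q(h)$) is a $p$-unit. Granted this, write $h = q^{m_0}\tilde h$, where $\tilde h$ is a $p$-integral power series with $p$-unit constant term; then $\tilde h^{-1}$ is also $p$-integral. Since $hF_0 \in V_F$ has bounded $p$-denominators, so does $\tilde hF_0 = q^{-m_0}(hF_0)$ (its coefficients are just a shift of those of $hF_0$), and therefore $F_0 = \tilde h^{-1}\cdot (\tilde h F_0)$ has bounded $p$-denominators as well. This contradicts Lemma~\ref{lempF0}, completing the proof.

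The principal obstacle is the existence of such an $h$: a priori the ideal $I$ might consist entirely of elements whose leading Fourier coefficient at the minimal vanishing order has positive $p$-adic valuation, in which case naive inversion breaks down. Overcoming this requires a careful analysis of the $p$-adic structure of $I$, or equivalently of the $3\times 3$ transition matrix $M$ expressing $(F, DF, D^2F)^t$ in terms of $(F_0, DF_0, D^2F_0)^t$ and a control of the $p$-adic leading behavior of $\det M$. The relevant techniques are developed in \cite{M2}, \cite{MM}, and \cite{FM1}, and once this combinatorial/arithmetic step is in hand, the rest of the proof is essentially formal.
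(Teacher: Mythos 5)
Your overall strategy -- transfer the unbounded-denominator property from the minimal-weight form $F_0$ to an arbitrary $F\in\mathcal{H}(k,\rho)$ by exploiting the $\frakM(D)$-module structure and an annihilator argument -- is the same strategy the paper uses. But the step you flag at the end as "the principal obstacle" is not a technicality you may defer: it is exactly the content of the proof, and as written your argument does not close. You need a nonzero element $h$ of the annihilator ideal that is $p$-integral with $p$-unit leading Fourier coefficient, and you give no reason such an $h$ exists; an arbitrary nonzero ideal of $\frakM_E$ need not obviously contain one, so "naive inversion" is precisely where the proof lives or dies.

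The paper supplies the missing ingredient via the assertion (proved in Section 3 of \cite{FM1}, stated there for $E=\QQ$ but valid in general): if $I\subseteq\mathcal{H}_E(\rho)$ is any \emph{nonzero} $\frakM_E(D)$-submodule, then some power $\Delta^r$ of $\Delta=\eta^{24}$ annihilates $\mathcal{H}_E(\rho)/I$. This is exactly the element you were missing: $\Delta^r=q^r\prod_{m\geq 1}(1-q^m)^{24r}$ has integer coefficients and leading coefficient $1$, so if $\Delta^r F_0$ lies in a space of forms with bounded $p$-denominators, then so does $F_0$ (multiplying by $\Delta^{-r}$ only shifts the $q$-expansion and multiplies by a $p$-integral series with unit constant term), contradicting Lemma \ref{lempF0}. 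The paper's packaging also differs slightly from yours: rather than forming the cyclic submodule generated by a single putative counterexample $F$ and its derivatives (and then having to justify that $F$, $DF$, $D^2F$ are $\frakM$-independent, which your appeal to "the same Fuchsian equation" does not quite do -- $F/\eta^{2k}$ does not satisfy the equation of $F_0/\eta^{2k_0}$; independence instead follows from irreducibility of $\rho$), it takes $I$ to be the set of all forms in $\mathcal{H}_{\bar\QQ}(\rho)$ violating the conclusion, notes it is an $\frakM_{\bar\QQ}(D)$-submodule, and applies the $\Delta$-annihilation result directly to conclude $I=0$. So your proposal is the right approach in outline, but it has a genuine gap precisely at the paper's key lemma, and the auxiliary independence claim is also under-justified.
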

 \begin{pf} (Cf.\ Section 3 of \cite{FM1}.)\ We use
 the following assertion: ($\star$)\ if $I\subseteq \mathcal{H}_E(\rho)$ is a \emph{nonzero} $\frak{M}_{E}(D)$-submodule, and if $\Delta=\eta^{24}$ is the  usual discriminant, then some power $\Delta^r$ of $\Delta$ \emph{annihilates} $\mathcal{H}_E(\rho)/I$.\ Indeed, the proof given in \cite{FM1} for the case $E=\QQ$ applies in general.
 
 We  take $E=\bar\QQ$ with $I$  the set of forms $G\in \mathcal{H}_{\bar \QQ}(\rho)$  that do \emph{not} satisfy the conclusions of the Theorem.\ So there is an index $j$ and a large enough integer $N$ (depending on $G$ and $j$) such that the Fourier coefficients of the $j^{th}$ component $G_j$ of $G$ is such that $p^NG_j$ is \emph{$p$-integral}.\ Here,  $p$ is a prime divisor of $n$, as in the statement of Lemma \ref{lempF0}.
 
 Now $I$ is an $\frak{M}_{\bar\QQ}(D)$-submodule of $\mathcal{H}_{\bar\QQ}(\rho)$, so by ($\star$) we can conclude that either $I=0$ or there is an integer $r$ such that $\Delta^rF_0\in I$, where $F_0$ is as in Lemma \ref{lempF0}.\ In the latter case, there is a power $p^N$ of $p$ and an index $j$ such that $p^N\Delta^r(F_0)_j$ is $p$-integral,
 in which case $p^N(F_0)_j$ is itself $p$-integral.\ Because this contradicts Lemma \ref{lempF0}, the conclusion is that $I=0$.\ The Theorem follows immediately.
 \end{pf}
 
 Finally, suppose that $f\in M_k(\bar{\Gamma}_0(2), \chi)$ is a holomorphic modular form with algebraic Fourier coefficients.\ By Proposition \ref{thmrho1}, $f$ is the first component of a  vector-valued modular form $F'\in \mathcal{H}_{\bar\QQ}(k, \rho)$, and we can then apply Theorem \ref{thmvvmfdim3} to $F'$ to complete the proof of Theorem \ref{thmgenubd}.  

\section{Congruences}
Let $n$ be a positive integer and let $0 < r < n$ be another integer.\ Define the character $\chi_{n,r}\colon \bar \Gamma_0(2) \to \CC^\times$ by setting $\chi_{n,r}(\bar U) = e^{2\pi i r/n}$ and $\chi_{n,r}(\bar V) = -1$.\ Assume that $n$ is odd and $r > n/2$, so that we are in Case A with a minimal weight of $2$ for $\rho = \Ind\chi$.\ In this case $H_0 = \ker \chi = \langle \bar U^n\rangle \Gamma_0(2)'$.\ Set $X = X_{H_0}$, which is a hyperelliptic curve of genus $g_X = (n-1)/2$
(cf.\ Theorem \ref{thmhypereqn}).\ One can use Riemann-Roch to show that $S_2(\bar\Gamma_0(2),\chi_{n,r})$ is one dimensional for each such pair $(n,r)$, and we have
\[
S_2(H_0) = \bigoplus_{r = (n+1)/2}^{n-1} S_2(\bar \Gamma_0(2),\chi_{n,r}).
\]
By looking at the first coordinate of the corresponding vector-valued modular forms, one obtains a basis for $S_2(H_0)$ as follows.\footnote{Let $\rho'$ be a representation equivalent to $\rho$ such that $\rho(\bar T)$ is diagonal.\ It was observed above that the first coordinate of a vector-valued modular form for $\rho$ is a modular form on $H_0$.\ This will still be true for the eqiuvalent representation $\rho'$, and it is this particular representation that is used to prove Theorem \ref{t:basis}.}
\begin{thm}
\label{t:basis}
Let $n$ be an odd integer that does not divide $3$ and set $H_0 = \langle \bar U^n\rangle\bar\Gamma_0(2)'$.\ Then $\dim_\CC S_2(H_0) = (n-1)/2$ and a basis for this space is given by the modular forms
\[
  f_{n,r} \df\eta^4K^{\frac rn - \frac 23}\ _3F_2\left(\frac{r}{n}-\frac 23,\frac rn-\frac 13, \frac rn;\frac{3r}{2n},\frac{3r}{2n} - \frac 12;K\right)
\]
where $r$ runs from $(n+1)/2$ up to $n-1$.
\end{thm}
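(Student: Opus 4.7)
The plan has three pieces: compute $\dim_\CC S_2(H_0) = (n-1)/2$ from the genus of $X_{H_0}$; identify each $f_{n,r}$ with the first coordinate of the minimal-weight vector-valued modular form of Section~\ref{s:newvvmfs} (thereby showing $f_{n,r}\in S_2(H_0)$); and deduce linear independence from distinct leading $q$-exponents. For the dimension: since $n$ is odd we are in Case A of Lemma~\ref{lemABCD}, so $H_0\subseteq \bar\Gamma(2)$, which is free; hence $H_0$ is torsion-free and $X_{H_0}$ is a smooth Riemann surface, so $S_2(H_0)\cong H^0(X_{H_0},\Omega^1)$ has dimension equal to the genus, which is $(n-1)/2$ by Theorem~\ref{thmhypereqn}.

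To identify $f_{n,r}$ with a vvmf coordinate, I would specialize Section~\ref{s:newvvmfs} to Case A with $r\geq n/2$: then $k_0=2$, $e=-1$, and the normalized exponents are $r_1=(2r-n)/(2n)$, $r_2=(n-r)/(2n)$, $r_3=(2n-r)/(2n)$. A short calculation of $a=r_1-\tfrac{1}{6}=r/n-\tfrac{2}{3}$, $a-b+1=r_1-r_2+1=3r/(2n)$, and $a-c+1=r_1-r_3+1=3r/(2n)-\tfrac{1}{2}$ matches the ${}_3F_2$ parameters in the theorem statement. Thus $f_{n,r}$ is exactly the first coordinate of the vvmf $F$ from (\ref{Fdef}) for $\rho'\sim \Ind_{\bar\Gamma_0(2)}^\Gamma\chi_{n,r}$. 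Because the intertwining matrix $B$ of the previous section has first row $(1,0,0)$, this first coordinate also equals the first coordinate of the vvmf $\widetilde F=BF$ for the original $\rho=\Ind\chi_{n,r}$, which by construction transforms under $\bar\Gamma_0(2)$ by $\chi_{n,r}$. Hence $f_{n,r}$ is a weight-two form for $(\bar\Gamma_0(2),\chi_{n,r})$, and \emph{a fortiori} for $H_0=\ker\chi_{n,r}$.

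The main obstacle will be cuspidality: I need $f_{n,r}$ to vanish at both cusps $\{0,\infty\}$ of $\bar\Gamma_0(2)$, which will force vanishing at the three cusps of $H_0$ (they project onto $\{0,\infty\}$ under $X_{H_0}\to X_0(2)$, and the $q$-expansions at the cusps $0$ and $1$ of $H_0$ differ only by the scalar $\chi_{n,r}(\bar T)$). At $\infty$ this is immediate: the leading $q$-exponent of $f_{n,r}$ is $r_1>0$ for $r>n/2$. At the cusp $0$, I would use Proposition~\ref{thmrho1}: the first row of $\rho(\bar S)$ is $(0,0,\epsilon\lambda)$, so $f_{n,r}|_2\bar S=\epsilon\lambda\cdot\widetilde f_3$, where $\widetilde f_3$ is the third coordinate of $\widetilde F=BF$. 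Computing, $\widetilde f_3=\epsilon\sigma(F_2-F_3)$; since $F_2,F_3$ have $q$-orders $r_2,r_3>0$ at infinity, $\widetilde f_3$ vanishes there, so $f_{n,r}$ vanishes at the cusp $0$.

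Finally, the leading $q$-exponent of $f_{n,r}$ at $\infty$ equals $r/n-1/2$ (combining $\eta^4\sim q^{1/6}$ with $K^{r/n-2/3}\sim q^{r/n-2/3}$), and this is strictly increasing in $r$; hence the $(n-1)/2$ forms $f_{n,r}$ for $r\in\{(n+1)/2,\ldots,n-1\}$ are linearly independent, and by the dimension count they form a basis of $S_2(H_0)$.
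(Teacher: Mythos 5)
Your proposal is correct and rests on the same two pillars as the paper: the identification of $f_{n,r}$ with the first coordinate of the minimal-weight vector-valued modular form (\ref{Fdef}) for $\rho'\sim\Ind_{\bar\Gamma_0(2)}^\Gamma\chi_{n,r}$ in Case A with $r>n/2$ (your parameter check $a=r/n-\tfrac23$, $a-b+1=\tfrac{3r}{2n}$, $a-c+1=\tfrac{3r}{2n}-\tfrac12$ is exactly right), and the genus computation $g_{H_0}=(n-1)/2$ of Theorem \ref{thmhypereqn}. Where you diverge is in how the spanning property is obtained: the paper decomposes $S_2(H_0)=\bigoplus_{r}S_2(\bar\Gamma_0(2),\chi_{n,r})$ into isotypic pieces for $G=\bar\Gamma_0(2)/H_0$ and invokes Riemann--Roch to see each piece is one-dimensional, whereas you bypass the per-character dimension count entirely, using the distinct leading exponents $r/n-\tfrac12$ to get linear independence and then matching against the total dimension $\dim S_2(H_0)=g_{H_0}$. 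Your route is slightly more self-contained (no Riemann--Roch beyond the genus already computed), and you also make explicit the cuspidality argument the paper leaves implicit: vanishing at $\infty$ from $r_1>0$, and at the cusps $0$ and $1$ via $f_{n,r}|_2\bar S=\veps\lambda\,\epsilon\sigma(F_2-F_3)$ together with $r_2,r_3>0$ and the $\bar T$-translation relating the cusps $0$ and $1$; this is a genuine gain in completeness. One small point that both your argument and the paper's sketch gloss over: the theorem lets $r$ range over all of $(n+1)/2,\dots,n-1$, including $r$ with $\gcd(r,n)>1$, for which $\chi_{n,r}$ has order $n'=n/\gcd(r,n)<n$; one should reduce to $(n',r')$ (the formula for $f_{n,r}$ depends only on $r/n$, and forms on $H_0(n')$ are forms on $H_0(n)$), and treat separately the degenerate value $r/n=2/3$ when $3\mid n$, where $\rho$ is reducible and $f_{n,r}=\eta^4$, which is a cusp form with the right character by inspection. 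With that remark added, your proof is complete.
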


After Theorem \ref{thmhypereqn} we may also interpret these modular forms as a basis of holomorphic differentials on the smooth projective hyperelliptic curve defined by the affine equation $y^2=x^n+64$.

\begin{ex}
The geometric considerations of this paper were inspired by the following computation: let $n = 5$. Then one checks that 
\[
  \dim_\CC S_k(H_0) = \begin{cases}
  2 & k=2,\\
  6 & k=4. 
\end{cases}
\]
Since $X$ is hyperelliptic, the canonical embedding is not a closed embedding in this case.\ To find a projective model for $H_0\backslash \uhp^*$, one can instead use modular forms of weight $4$.

To describe $S_2(H_0)$ we use the characters $\chi_{(5,3)}$ and $\chi_{(5,4)}$.\ Let $f_1$ and $f_2$ denote the corresponding cusp forms.\ One finds that
\begin{align*}
f_1 &= \eta^4K^{-\frac{1}{15}}\ _3F_2\left(-\frac{1}{15},\frac{4}{15},\frac 35;\frac{9}{10},\frac 25;K\right)\\
    &= q_{10} - \frac{28}{5}q_{10}^{11} + \frac{222}{5^2}q_{10}^{21} + \frac{168}{5^3}q_{10}^{31}-\frac{5071}{5^4}q_{10}^{41}-\frac{123732}{5^6}q_{10}^{51} - \frac{58634}{5^7}q_{10}^{61} + \cdots, \\
f_2 &= \eta^4K^{\frac{2}{15}}\ _3F_2\left(\frac{2}{15},\frac{7}{15},\frac 45;\frac 65,\frac{7}{10};K\right)\\
&=q_{10}^3 - \frac{4}{5}q_{10}^{13} - \frac{102}{5^2}q_{10}^{23} + \frac{296}{5^3}q_{10}^{33}+\frac{1839}{5^4}q_{10}^{43}+\frac{15324}{5^6}q_{10}^{53}  + \frac{463134}{5^7}q_{10}^{63} +\cdots,
\end{align*}
where $q_{10} = e^{2\pi i z/10}$. One can similarly show that
\begin{align*}
  f_3 &= \eta^4K^{-\frac 16}~_2F_1(-1/6,1/6;1/2;K),\\
  f_4 &= \eta^4K^{1/3}~_2F_1(1/3,2/3;3/2;K),
\end{align*}
describe a basis for $M_2(\bar\Gamma(2))$ (see Example 21 of \cite{FM2}), and the forms $f_1$, $f_2$, $f_3$ and $f_4$ define a basis for $M_2(H_0)$.\ The forms
\begin{align*}
G_1 &= f_1^2, & G_2 &= f_1f_2, & G_3 &= f_2^2,\\
G_4 &= f_1f_3, & G_5 &= f_2f_3, & G_6 &= f_2f_4. 
\end{align*}
then yield a basis for $S_4(H_0)$, and it defines a closed embedding $X \to \PP^5_\CC$ via
\[z\mapsto (G_1(z):G_2(z):G_3(z):G_4(z):G_5(z):G_6(z)).\]
The image is the smooth curve whose zero locus is defined by the homogeneous equations
\[
\left(\begin{array}{r}
X_{1}^{2} -  X_{4} X_{5} + 64 X_{3} X_{6} \\
X_{1} X_{2} -  X_{5}^{2} + 64 X_{6}^{2} \\
- X_{2}^{2} + X_{1} X_{3} \\
- X_{2} X_{4} + X_{1} X_{5} \\
- X_{2} X_{3} + X_{1} X_{6} \\
- X_{3} X_{4} + X_{2} X_{5} \\
- X_{3}^{2} + X_{2} X_{6} \\
X_{3} X_{5} -  X_{4} X_{6}
\end{array}\right).
\]
In the affine chart defined by $X_6 \neq 0$ the curve is given by
\[
\{(a^3:a^2:a:ab:b:1) \in \PP^5 \mid a^5-b^2+64 = 0\},
\]
where $a = X_3/X_6$ and $b = X_5/X_6$.\ This is as expected by Theorem \ref{thmhypereqn}.
\end{ex}

Before we state and prove our congruence result we must prepare with some preliminary definitions and results.\ Let $q = p^r$ be a prime power and let $\chi$ denote a character of the cyclic group $\FF_{q}^\times$.\ Extend $\chi$ to a function on $\FF_{q}$ by setting $\chi(0) = 0$ if $\chi \neq 1$ and $\chi(0) = 1$ otherwise.\ Recall that the \emph{Gauss sum} $G(\chi)$ of such a character $\chi$ is the complex number
\[
  G(\chi) \df \sum_{u \in \FF_q}\chi(u)\be(\Tr_{\FF_q/\FF_p}(u)/p). 
\]
If $\chi'$ is a second character of $\FF_q^\times$, then the \emph{Jacobi sum} $J(\chi,\chi')$ associated to $\chi$ and $\chi'$ is the complex number
\[
  J(\chi,\chi') \df \sum_{u_1+u_2 = 1}\chi(u_1)\chi'(u_2).
\]
\begin{lem}
\label{l:jsum}
Let $\chi$ and $\chi'$ be characters of $\FF_q^\times$. If $\chi\chi'$ is nontrivial then
\[
  J(\chi,\chi')= \frac{G(\chi)G(\chi')}{G(\chi\chi')}.
\]
\end{lem}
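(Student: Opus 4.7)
The plan is to expand the product $G(\chi)G(\chi')$ as a double sum, reorganize it by the value of $s=u_1+u_2$, and recognize the Jacobi sum in the inner sum after a rescaling.

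First I would write
\[
G(\chi)G(\chi') = \sum_{u_1,u_2 \in \FF_q} \chi(u_1)\chi'(u_2)\,\be\bigl(\Tr_{\FF_q/\FF_p}(u_1+u_2)/p\bigr)
\]
and group terms according to $s \df u_1+u_2$, obtaining
\[
G(\chi)G(\chi') = \sum_{s \in \FF_q} \be(\Tr(s)/p)\sum_{u_1+u_2 = s} \chi(u_1)\chi'(u_2).
\]
For $s \neq 0$, substitute $u_i = s v_i$ so that $v_1+v_2 = 1$. A short check (distinguishing whether $\chi$ or $\chi'$ is trivial, using the stated convention $\chi(0)=0$ for $\chi\neq 1$) gives $\chi(sv_1)\chi'(sv_2) = (\chi\chi')(s)\,\chi(v_1)\chi'(v_2)$, hence the inner sum equals $(\chi\chi')(s)\,J(\chi,\chi')$. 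For $s = 0$, the inner sum is $\sum_{u_1 \in \FF_q}\chi(u_1)\chi'(-u_1) = \chi'(-1)\sum_{u_1}(\chi\chi')(u_1)$, which vanishes because $\chi\chi'$ is nontrivial. Since $(\chi\chi')(0)=0$ in that nontrivial case anyway, one may include $s=0$ in the outer sum with no change.

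Combining these observations yields
\[
G(\chi)G(\chi') = J(\chi,\chi') \sum_{s \in \FF_q}(\chi\chi')(s)\,\be(\Tr(s)/p) = J(\chi,\chi')\,G(\chi\chi').
\]
Dividing by $G(\chi\chi')$, which is nonzero because $\chi\chi'$ is nontrivial (indeed $|G(\chi\chi')| = \sqrt{q}$), gives the desired identity.

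The only genuinely subtle step is verifying the multiplicativity $\chi(sv_1)\chi'(sv_2) = (\chi\chi')(s)\chi(v_1)\chi'(v_2)$ when one of $v_1,v_2$ is zero; this is where the precise choice of extension $\chi(0)$ matters, but it works out in every case consistent with the stated convention. Everything else is routine bookkeeping.
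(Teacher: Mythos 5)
Your proof is correct. The paper gives no argument of its own for this lemma---it simply cites Theorem 2.1.3 of Berndt--Evans--Williams---and your computation is exactly the standard proof underlying that citation: expand $G(\chi)G(\chi')$, group by $s=u_1+u_2$, rescale to reduce the inner sum to $J(\chi,\chi')$ when $s\neq 0$, and note that the $s=0$ block vanishes because $\chi\chi'$ is nontrivial. Your care with the convention for $\chi(0)$ (so that the multiplicativity $\chi(sv_1)\chi'(sv_2)=(\chi\chi')(s)\chi(v_1)\chi'(v_2)$ survives the cases $v_1=0$ or $v_2=0$, even when one of $\chi,\chi'$ is trivial) and the observation that $\lvert G(\chi\chi')\rvert=\sqrt{q}\neq 0$ are precisely the points needed for the lemma as stated, so nothing is missing.
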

\begin{proof}
Theorem 2.1.3 of \cite{BEW}.
\end{proof}
\begin{lem}
\label{l:gsum}
Let $n$ be an odd positive integer and let $p$ be an odd prime congruent to $-1$ mod $n$. Let $q = p^{2t}$, and let $\chi$ be a character of $\FF_q^\times$ of order $n$. Then the Gauss sum $G(\chi)$ satisfies $G(\chi) = (-1)^{t+1}p^t$.
\end{lem}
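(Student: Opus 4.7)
The plan is to reduce the computation to the case $t=1$ via the Hasse--Davenport lifting relation, and then to evaluate $G(\psi)$ for a character $\psi$ of $\FF_{p^2}^\times$ of order $n$ directly.\ First I would record two arithmetic consequences of the hypotheses: since $p\equiv -1\pmod n$ and $n$ is odd, we have $n\mid p+1$ and $\gcd(n,p-1)\mid \gcd(p+1,p-1) = 2$, hence $\gcd(n,p-1)=1$ and $n\mid p^2-1$.

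Next I would show that $\chi$ is a lift from $\FF_{p^2}^\times$.\ The norm $N\colon \FF_{p^{2t}}^\times \to \FF_{p^2}^\times$ is surjective, and for $g$ a generator of $\FF_{p^{2t}}^\times$ one checks that $N(g) = g^{(p^{2t}-1)/(p^2-1)}$, so $\ker N = \langle g^{p^2-1}\rangle$.\ A character of order $n$ is trivial on $\ker N$ iff $n\mid p^2-1$, which holds here, so $\chi = \psi\circ N$ for some character $\psi$ of $\FF_{p^2}^\times$ that is still of order $n$ (surjectivity of $N$).\ The Hasse--Davenport lifting relation then gives
\[
G(\chi) = (-1)^{t-1}G(\psi)^{t} = (-1)^{t+1}G(\psi)^{t},
\]
so it suffices to prove $G(\psi) = p$.

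The key computation is the last step.\ Because $\gcd(n,p-1)=1$, $\psi$ is trivial on $\FF_p^\times\subset \FF_{p^2}^\times$.\ I would multiply $G(\psi)$ by $\sum_{a\in\FF_p^\times}\psi(a)^{-1} = p-1$, substitute $v = u/a$ in the resulting double sum, and exploit orthogonality of $a\mapsto \be(a\Tr(v)/p)$ on $\FF_p^\times$ together with $\sum_{v}\psi(v) = 0$; this collapses to
\[
(p-1)\,G(\psi) = p\sum_{v\in K^\times}\psi(v),
\]
where $K = \ker\Tr_{\FF_{p^2}/\FF_p}$.\ Since $K = \FF_p\,\theta$ for any nonzero trace-zero element $\theta$, and $\psi|_{\FF_p^\times}$ is trivial, the right hand side equals $p(p-1)\psi(\theta)$, so $G(\psi) = p\,\psi(\theta)$.

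To finish I would show $\psi(\theta)=1$.\ The condition $\Tr(\theta)=0$ gives $\theta^p=-\theta$, so $\theta^{p+1}=-\theta^2$.\ Applying $\psi$ and using $\psi(-1)=1$ (since $n$ is odd) and $\psi^{p+1} = 1$ (since $n\mid p+1$), we obtain $\psi(\theta)^2=1$; combined with $\psi(\theta)^n=1$ for $n$ odd this forces $\psi(\theta)=1$, so $G(\psi)=p$ and the lemma follows.\ The main obstacle is the character-sum manipulation leading to $G(\psi)=p\,\psi(\theta)$; the Hasse--Davenport step is standard once the factorization of $\chi$ through the norm is in place.
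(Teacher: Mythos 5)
Your proposal is correct, and it takes a genuinely different route: the paper disposes of this lemma by citing the general evaluation of Gauss sums in the semiprimitive case (Theorem 11.6.3 of \cite{BEW}), whereas you give a self-contained proof. Your reduction is sound: since $n$ is odd and $n\mid p+1$, indeed $\gcd(n,p-1)=1$ and $n\mid p^2-1$, so $\chi$ is trivial on $\ker N_{\FF_{p^{2t}}/\FF_{p^2}}$ and factors as $\psi\circ N$ with $\psi$ of order $n$ on $\FF_{p^2}^\times$; the Hasse--Davenport lifting relation (whose additive characters match the paper's, by transitivity of the trace) then gives $G(\chi)=(-1)^{t+1}G(\psi)^t$. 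Your evaluation $G(\psi)=p\,\psi(\theta)$ via averaging over $\FF_p^\times$ and collapsing onto the trace-zero line $K=\FF_p\theta$ is correct (it uses $\psi$ nontrivial, i.e.\ $n>1$, which is the intended reading of the statement), and the final step $\psi(\theta)=1$ from $\theta^{p+1}=-\theta^2$, $\psi^{p+1}=1$, $\psi(-1)=1$ and $n$ odd is clean. The sign agrees with the cited theorem: in the notation there, the extra sign $(-1)^{t(p+1)/n}$ is $+1$ because $(p+1)/n$ is even when $p$ is odd and $n$ is odd, so both approaches give $(-1)^{t+1}p^t$. What your argument buys is independence from the general semiprimitive machinery (essentially re-proving the special case $j=1$ by elementary character sums plus Hasse--Davenport); what the citation buys is brevity and a statement already covering all semiprimitive cases.
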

\begin{proof}
See Theorem 11.6.3 of \cite{BEW}.
\end{proof}
\begin{prop}
\label{p:lfunction}
  Let $n$ be an odd integer and let $p$ be a prime satisfying $p \equiv -1\pmod{n}$. Then the numerator of the zeta function of a smooth projective model of the affine curve $y^2 = x^n+64$ is equal to $L_p(T) = (1+pT^2)^{(n-1)/2}$.
\end{prop}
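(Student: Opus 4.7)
The plan is to determine the Frobenius eigenvalues $\alpha_1,\ldots,\alpha_{n-1}$ (all of absolute value $\sqrt{p}$ by Weil) of $X$ over $\FF_p$ via a modest point count followed by a Jacobi--Gauss sum calculation. Writing $a_m := \sum_i \alpha_i^m = p^m+1-|X(\FF_{p^m})|$, the identity $L_p(T)=(1+pT^2)^{(n-1)/2}$ is equivalent to the conjunction of $(i)$ $a_m=0$ for all odd $m$, and $(ii)$ $a_2=-(n-1)p$. Indeed, $(i)$ forces the multiset $\{\alpha_i\}$ to be closed under $\alpha\mapsto -\alpha$, so one may group the eigenvalues as $\{\pm\beta_k\}_{k=1}^{(n-1)/2}$ with $|\beta_k|^2=p$; then $a_2=2\sum_k \beta_k^2$, and equality in the triangle inequality $\bigl|\sum_k \beta_k^2\bigr|\le \tfrac{n-1}{2}p$ (forced by $(ii)$) demands $\beta_k^2=-p$ for every $k$, giving $L_p(T)=\prod_k(1+pT^2)=(1+pT^2)^{(n-1)/2}$.

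Since $n$ is odd, the smooth projective model has a unique point at infinity, so $|X(\FF_{p^m})|=p^m+1+S_m$ where
\[
S_m \;\df\; \sum_{x\in\FF_{p^m}} \chi_2(x^n+64),
\]
with $\chi_2$ the quadratic character (extended by $\chi_2(0)=0$). For $(i)$, the hypothesis $p\equiv -1\pmod n$ with $n$ odd gives $\gcd(n,p^m-1)=\gcd(n,-2)=1$ for all odd $m$, so $x\mapsto x^n$ is a bijection of $\FF_{p^m}$ and $S_m=\sum_u \chi_2(u+64)=0$ by orthogonality, whence $a_m=0$.

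For $(ii)$ we work over $\FF_{p^2}$, where $n\mid p^2-1$. Fix a character $\chi_n$ of exact order $n$. The standard manipulation (substitute $u=x^n$, use orthogonality to extract $n$-th powers, and change variable $u=-64v$) yields
\[
S_2 \;=\; \chi_2(64)\sum_{j=1}^{n-1}\chi_n^j(-64)\,J(\chi_n^j,\chi_2).
\]
Observe that $\chi_2(64)=1$ (since $64=8^2$) and $\chi_n^j(-1)=1$ (since $n$ odd makes $-1=(-1)^n$ an $n$-th power), so $\chi_n^j(-64)=\chi_n^{6j}(2)$. To evaluate the Jacobi sums, combine Lemma~\ref{l:jsum} with Lemma~\ref{l:gsum} (which yields $G(\chi_n^j)=G(\chi_n^{2j})=p$, since $\chi_n^j$ has odd order dividing $n$ and $t=1$) and the Hasse--Davenport product relation $G(\chi_n^j)G(\chi_n^j\chi_2)=G(\chi_n^{2j})G(\chi_2)\chi_n^{-j}(4)$, which gives $G(\chi_n^j\chi_2)=G(\chi_2)\chi_n^{-j}(4)$. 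The $G(\chi_2)$ factors cancel, leaving the clean identity
\[
J(\chi_n^j,\chi_2) \;=\; p\,\chi_n^{2j}(2).
\]
Substituting, $S_2 = p\sum_{j=1}^{n-1}\chi_n^{8j}(2) = p\sum_{k=1}^{n-1}\chi_n^k(2)$ after reindexing $k\equiv 8j\pmod n$ (bijective since $\gcd(8,n)=1$). Finally, $2\in\FF_p^\times$ is an $n$-th power in $\FF_{p^2}^\times$: the factorization $(p^2-1)/n=(p-1)\cdot(p+1)/n$ (valid since $p\equiv -1\pmod n$) gives $2^{(p^2-1)/n}=(2^{p-1})^{(p+1)/n}=1$. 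Hence $\chi_n(2)=1$, the inner sum equals $n-1$, and $S_2=(n-1)p$, i.e., $a_2=-(n-1)p$, as required.

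The main obstacle is the clean evaluation of $J(\chi_n^j,\chi_2)$: Lemma~\ref{l:gsum} controls only Gauss sums of odd order, so the mixed-parity Gauss sum $G(\chi_n^j\chi_2)$ has to be brought in separately via the Hasse--Davenport product formula, and the pleasant surprise is that the unwieldy factor $G(\chi_2)$ cancels in the resulting ratio. Once this identity is in hand, the remaining character-sum manipulations reduce to the elementary observation that $p\equiv -1\pmod n$ with $n$ odd already forces $\FF_p^\times\subseteq (\FF_{p^2}^\times)^n$.
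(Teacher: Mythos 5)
Your argument is correct, and it is in substance the proof that the paper leaves implicit: the paper's proof of Proposition \ref{p:lfunction} is the single sentence ``use results of Weil and the preceding results on Gauss and Jacobi sums,'' and your character-sum computation of $\abs{X(\FF_{p^m})}$ is exactly that program carried out, using Lemmas \ref{l:jsum} and \ref{l:gsum} where the paper intends them to be used. Two features of your write-up go beyond the paper's stated toolkit and are worth flagging. First, reducing the identity $L_p(T)=(1+pT^2)^{(n-1)/2}$ to the two facts $a_m=0$ for odd $m$ and $a_2=-(n-1)p$ --- via the evenness of $\log L_p$, the resulting closure of the eigenvalue multiset under negation, and the equality case of the triangle inequality combined with $\abs{\alpha_i}=\sqrt p$ --- is a clean device that spares you from evaluating the point counts over every $\FF_{p^{2m}}$, which is what a direct appeal to Weil's formulas would otherwise entail. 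Second, Lemma \ref{l:gsum} only evaluates Gauss sums of characters of odd order, so $G(\chi_n^j\chi_2)$ is genuinely outside its scope; your use of the Hasse--Davenport product relation, with the pleasant cancellation of $G(\chi_2)$, correctly supplies the missing evaluation $J(\chi_n^j,\chi_2)=p\,\chi_n^{2j}(2)$ (alternatively one could invoke the full strength of Theorem 11.6.3 of \cite{BEW}, since $p$ odd and $p\equiv-1\pmod d$ for $d\mid n$ already give $p\equiv-1\pmod{2d}$, but some such supplement to the paper's lemmas is needed, and yours works). Finally, like the paper, you implicitly assume $p$ odd and good reduction; this is harmless, since $p\equiv-1\pmod n$ with $n>1$ odd forces $p\nmid n$, the quadratic-character setup and Lemma \ref{l:gsum} already presuppose $p$ odd, and the $n$-th-power observations ($\chi_n^j(-1)=\chi_n^j(2)^{\,?}=1$ via $n\mid p+1$) are all verified correctly.
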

\begin{proof}
Use results of Weil \cite{W1} and the preceding results on Gauss and Jacobi sums.
\end{proof}
An $n$-term congruence result for weight two modular forms on $X$ follows immediately from Proposition \ref{p:lfunction} and Theorem 6.1 of \cite{K1}.\ The factorization of the $L$-function of $X$ into quadratic factors suggests that one might be able to reduce this to a three-term congruence relation of Hecke type.\ Our next theorem confirms that such congruence relations indeed hold.
\begin{thm}
\label{t:ASD}
Let $n$ be an odd integer that does not divide $3$, let $r$ denote an integer between $(n+1)/2$ and $n-1$, and let $f_{n,r}$ be a modular form of weight $2$ on $X$ as in Theorem \ref{t:basis}.\ Let $N = 2n$ and let $f_{n,r}(q_N) = \sum_{m \geq 1} a_mq^m_N$ denote the $q_N$-expansion of $f_{n,r}$, where $q_N= e^{2\pi i z/N}$.\ Then for all primes $p\equiv -1\pmod{n}$, and for all indices $m \geq 1$, one has the congruence
\[
  a_{p^2m}+pa_m \equiv 0 \pmod{p^{2 + v_p(m)}}.
\]
\end{thm}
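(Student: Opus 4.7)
The plan is to translate the desired congruence into a statement about the Frobenius action on the cohomology of the hyperelliptic curve $X : y^2 = x^n + 64$ of Theorem \ref{thmhypereqn}, and then to exploit both Proposition \ref{p:lfunction} and the large automorphism group of $X$. By Theorem \ref{t:basis} and the discussion preceding it, $f_{n,r}$ corresponds to a regular differential $\omega_{n,r}$ on $X$ whose $q_N$-expansion at the cusp above $\infty$ agrees with the stated Fourier expansion, so it suffices to establish the congruence for the coefficients of $\omega_{n,r}$. I would then invoke Theorem 6.1 of \cite{K1}, which encodes the Fourier coefficients of any holomorphic differential on a smooth projective curve over $\ZZ_{(p)}$ in terms of the crystalline Frobenius $F$ acting on $H^1_{\text{crys}}(X_{\FF_p})$; any polynomial identity $P(F) = 0$ on a Frobenius-invariant subspace containing $\omega_{n,r}$ then translates into a congruence among the $a_m$. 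Combined with Proposition \ref{p:lfunction}, which gives $L_p(T) = (1 + pT^2)^{(n-1)/2}$, the goal is reduced to showing that $F^2 + p$ annihilates the piece of $H^1_{\text{crys}}$ containing $\omega_{n,r}$.

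To obtain this identity I would use the $\mu_n$-action $\phi_\zeta : (x,y) \mapsto (\zeta x, y)$ on $X$ to decompose $H^1_{\text{crys}}(X_{\FF_p})$ into character eigenspaces $V_j$ (passing to $\FF_{p^2}$ so that the $\phi_\zeta$ are rational). A direct count using the basis $\omega_j = x^{j-1}dx/y$ of $H^0(X, \Omega^1)$ shows that each nontrivial $\mu_n$-character appears with multiplicity exactly one in $H^1$, so $\omega_{n,r}$ spans a one-dimensional component $V_r$. Since $p \equiv -1 \pmod n$, arithmetic Frobenius conjugates $\chi_{n,r}$ to $\chi_{n,n-r}$, so $F$ exchanges $V_r$ and $V_{n-r}$, and $F^2$ preserves and thus acts as a scalar on $V_r$. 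The characteristic polynomial of $F$ restricted to the two-dimensional invariant subspace $V_r \oplus V_{n-r}$ has degree two and divides $(T^2+p)^{(n-1)/2}$ by Proposition \ref{p:lfunction}; the only such divisor is $T^2 + p$, whence $F^2 = -p$ on $V_r$.

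Translating this identity back through Katz's theorem yields the congruence. The main obstacle I anticipate is verifying that Katz's theorem really delivers the strength $p^{2+v_p(m)}$ claimed, rather than the naive $p^{1+v_p(m)}$ one expects from a weight-$2$ ASD framework. The extra factor of $p$ must come from the vanishing of $F$ modulo $p$ on the Hodge subbundle (a consequence of $\omega_{n,r}$ being holomorphic, together with the fact that $F$ factors through the Verschiebung on this piece). Matching this up with the precise formulation of Theorem 6.1 of \cite{K1}, together with the careful bookkeeping of the $q_N$-expansion versus the natural parameter on $X_{\FF_p}$ at the cusp above $\infty$, is the technical crux of the argument.
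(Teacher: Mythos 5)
Your proposal is correct in substance, but it reaches the crucial intermediate fact by a genuinely different route than the paper. Both arguments reduce to showing that the Frobenius of $\FF_{p^2}$ acts as the scalar $-p$ on the one-dimensional isotypic piece of $H^1$ attached to $f_{n,r}$, and both then quote Theorem 6.1 of \cite{K1} to convert this into $a_{p^2m}+pa_m\equiv 0 \pmod{p^{2+v_p(m)}}$. The paper obtains the eigenvalue by computing the \emph{twisted} $L$-series $L(X_q,\chi_{n,r},T)$ head-on: a point count over the fields $\FF_{q^m}$, reorganized by Moebius/inclusion--exclusion identities and evaluated via the Gauss and Jacobi sum formulas of Lemmas \ref{l:jsum} and \ref{l:gsum}, gives $L(X_q,\chi_{n,r},T)=1+pT$. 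You instead take the \emph{untwisted} Proposition \ref{p:lfunction} as input and argue structurally: each nontrivial $\mu_n$-character occurs with multiplicity one in $H^1$ (your count on $\omega_j=x^{j-1}dx/y$, plus the dual half in $H^1(X,\cO_X)$, is correct, and since the hyperelliptic involution acts as $-1$ on all of $H^1$ it is harmless to ignore the $\ZZ/2$ factor of $G$); because $p\equiv -1\pmod n$, Frobenius interchanges the $\chi$ and $\chi^{-1}$ eigenspaces, so $F^2$ acts as a scalar on each, and that scalar is forced to be $-p$. This is shorter and more conceptual, shows in fact that $F_{p^2}=-p$ on all of $H^1$ (so the exact matching of $r$ with an exponent $j$ is immaterial), and quarantines all character-sum work inside Proposition \ref{p:lfunction}; the paper's computation, by contrast, identifies the factor attached to each specific $\chi_{n,r}$ directly. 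Two small repairs to your write-up: the assertion that the only degree-two divisor of $(T^2+p)^{(n-1)/2}$ is $T^2+p$ is false over $\bar\QQ_\ell$ (the factors $(T\pm\sqrt{-p})^2$ also divide); what saves the step is that $F$ is anti-diagonal on $V_r\oplus V_{n-r}$, hence traceless there, or simply that every eigenvalue of $F_p$ on $H^1$ squares to $-p$, so the scalar by which $F^2$ acts is $-p$ in any case. And your worry about the strength $p^{2+v_p(m)}$ versus $p^{1+v_p(m)}$ dissolves once one notes that Katz's theorem is applied over $\FF_q$ with $q=p^2$, where the relation $F_q=-p$ on the one-dimensional piece yields the modulus $q\,p^{v_p(m)}=p^{2+v_p(m)}$ directly; this is exactly how the paper uses the citation, so no separate argument about Frobenius vanishing on the Hodge subbundle is needed.
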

\begin{proof}
If $\gcd(n,r) > 1$ then we may replace $n$ and $r$ by $n/\gcd(n,r)$ and $r/\gcd(n,r)$ and work on the curve $y^2 = x^{n/\gcd(n,r)}+64$.\ We may thus assume without loss of generality that $\gcd(n,r) = 1$.

Let $G = \bar{\Gamma}_0(2)/H_0 \cong (\ZZ/n\ZZ)\times (\ZZ/2\ZZ)$ and let $\zeta$ denote a primtive $n$th root of unity.\ Then $G$ acts on $X$ via $(r,e)\cdot (x,y) = (\zeta^rx,ey)$. Recall that there is a decomposition
\[
S_2(H_0) = \bigoplus_{r = (n+1)/2}^{n-1} S_2(\bar \Gamma_0(2),\chi_{n,r})
\]
where $S_2(\bar \Gamma_0(2),\chi_{n,r})$ is the $\chi_{n,r}$-isotypic piece of $S_2(H_0)$ under the induced action of the finite abelian group $G$.\ Furthermore, each of these pieces is one-dimensional.\ The idea now is to use this group action and Theorem 6.1 of \cite{K1}.

Let $p \equiv -1\pmod{n}$ be a prime, so that if $q = p^2$, then $\FF_q$ contains a primitive $n$th root of unity.\ Thus, if we now write $X_q$ for the reduction of $X$ mod $p$, basechanged to $\FF_q$, then $G$ acts on $X_q$ over $\FF_q$.\ Let $F$ denote the $q$th power Frobenius.\ We wish to compute the twisted $L$-series
\begin{align*}
  L(X_q,\chi_{n,r},T) &\df \exp\left(\frac{1}{2n}\sum_{m\geq 1} \frac{T^m}{m}\sum_{g \in G} \Tr(\chi_{n,r}(g^{-1}))\abs{X(\bar \FF_q)^{F^mg}}\right)\\
&= \exp\left(\frac{1}{2n}\sum_{m\geq 1} \frac{T^m}{m}\sum_{d \mid n}\sum_{\substack{a = 1\\ \gcd(a,\frac{n}{d}) = 1}}^{n/d}\frac{\left\{\abs{X(\bar \FF_q)^{F^m(ad,0)}}-\abs{X(\bar \FF_q)^{F^m(ad,1)}}\right\}}{\zeta^{adr}}\right).
\end{align*}
Note that $q \equiv 1 \pmod{n}$ and $q$ odd imply
\[
X(\bar \FF_q)^{F^m(ad,e)} = \{(x,y) \in X(\bar \FF_q) \mid (\zeta^{ad}F^m(x),(-1)^eF^m(y)) = (x,y)\}\cup\{\infty\}.
\]
These conditions force $x \in \FF_{q^{mn/d}}$ and $y \in \FF_{q^{2m/\gcd(2,e)}}$. Let us now set
\[
  N(m,ad,e) = \abs{\{(x,y) \in \bar \FF_q^2 \mid y^2 = x^n+64 \text{ and } (\zeta^{ad}F^m(x),(-1)^eF^m(y)) = (x,y)\}}
\]
so that
\begin{align*}
 & L(X_q,\chi_{n,r},T) = \exp\left(\frac{1}{2n}\sum_{m\geq 1} \frac{T^m}{m}\sum_{d \mid n}\sum_{\substack{a = 1\\ \gcd(a,n/d) = 1}}^{n/d}\zeta^{-adr}\left\{N(m,ad,0)-N(m,ad,1)\right\}\right)\\
&=\exp\left(\frac{1}{n}\sum_{m\geq 1} \frac{T^m}{m}\sum_{d \mid n}\sum_{\substack{a = 1\\ \gcd(a,n/d) = 1}}^{n/d}\zeta^{-adr}\sum_{\substack{u_1-u_2=-64\\ u_1,u_2 \in \FF_{q^m}}}N(x^n = u_1; F^m(x)=\zeta^{-ad}x)\kappa(u_2)\right)
\end{align*}
where $\kappa$ denotes the quadratic character of $\FF_{q^m}^\times$. If $x^n = u_1$ and $F^m(x)=\zeta^{-ad}x$, then the other solutions of $x^n = u_1$ are the $\zeta^jx$, and $F^m(\zeta^jx) = \zeta^{j-ad}x$.\ Note that $\zeta^{-ad}$ is a primitive $(n/d)$th root of unity.\ It follows that the condition $F^m(x) = \zeta^{-ad}x$ forces $\FF_{q^{mn/d)}}= \FF_{q^m}(x)$.\ These observations show that
\[
N(x^n = u_1; F^m(x)=\zeta^{-ad}x) = \frac{1}{\phi(n/d)}N(x^n = u_1; \FF_{q^{mn/d}} = \FF_{q^m}(x)).
\]
In particular, this quantity is independent of $a$.\ Note that if $m$ is an integer, and if $\zeta_m$ is a primitive $m$th root of unity, then
\[
\sum_{\substack{a = 1\\ \gcd(a,m) = 1}}^{m}\zeta^{-ar}_m = \sum_{a = 1}^{m}\sum_{b \mid \gcd(a,m)}\mu(b)\zeta^{-ar}_m = \sum_{b \mid m}\mu(b)\sum_{a = 1}^{m/b}\zeta^{-abr}_m = m\sum_{\substack{b\mid m\\ m\mid br}}\frac{\mu(b)}{b}
\]
where $\mu$ is the Moebius function.\ We thus see that $L(X_q,\chi_{n,r},T)$ is equal to
\[\exp\left(\frac{1}{n}\sum_{m\geq 1} \frac{T^m}{m}\sum_{d \mid n}\frac{\mu(d)}{\phi(d)}\sum_{\substack{u_1-u_2=-64\\ u_1,u_2 \in \FF_{q^m}}}N(x^n = u_1; \FF_{q^{md}} = \FF_{q^m}(x))\kappa(u_2)\right);\]
note that the condition $d{\mid}b$ replaced $d{\mid}br$, which is permissible since $\gcd(n,r) = 1$. 

Define an arithmetic function $\alpha$ as $\alpha(x) = \prod_{p\mid x} (-1)^{v_p(x)}$.\ An inclusion-exclusion argument allows one to eliminate the condition $\FF_{q^{md}} = \FF_{q^m}(x)$.\ We deduce that
\begin{align*}
&L(X_q,\chi_{n,r},T)\\
&=\exp\left(\frac{1}{n}\sum_{m\geq 1} \frac{T^m}{m}\sum_{d \mid n}\frac{\mu(d)}{\phi(d)}\sum_{\substack{u_1-u_2=-64\\ u_1,u_2 \in \FF_{q^m}}}\sum_{c \mid d}\alpha(d/c)N(x^n = u_1; x\in \FF_{q^{mc}})\kappa(u_2)\right).
\end{align*}
We have $N(x^n = u_1; x \in \FF_{q^{mc}}) = \sum_{\chi^n = 1}\chi(u_1)$, where the sum runs over all characters of $\FF_{q^{mc}}^\times$ of order dividing $n$. Since $u_1$ is in $\FF_{q^m}$, we'd like to sum over characters of $\FF_{q^m}$ instead.\ The characters of $\FF_{q^{mc}}^\times$ of order $n$ restrict to characters of $\FF_{q^m}^\times$ of order $n/c$. 
Applications of Lemmas \ref{l:jsum} and \ref{l:gsum} yield
\[
\sum_{\chi^{n/c}=1}\sum_{\substack{u_1-u_2=-64\\ u_1,u_2 \in \FF_{q^m}}}\chi(u_1)\kappa(u_2) = \sum_{\chi^{n/c} = 1}(-1)^{m+1}p^m,
\]
so that we deduce
\[
L(X_q,\chi_{n,r},T)=\exp\left(-\sum_{m\geq 1} \frac{(-pT)^m}{m}\sum_{d \mid n}\frac{\mu(d)}{\phi(d)}\sum_{c \mid d}\alpha(d/c)\right).
\]
Note that
\[
\sum_{c \mid d}\alpha(c) = \prod_{p \mid d}\sum_{t = 0}^{v_p(d)}\alpha(p^t) = \prod_{p \mid d}\frac{(-1)^{v_p(d)}+1}{2} = \begin{cases}
1 & d = z^2,\\
0 & d \neq z^2.
\end{cases}
\]
Since the Moebius function $\mu$ vanishes on squares, save for $\mu(1) = 1$, we conclude that $L(X_q,\chi_{n,r},T) = 1+pT$.\ Theorem 6.1 of \cite{K1} now implies that $a_{mp^2}+pa_m \equiv 0 \pmod{p^{2+v_p(m)}}$ for all $m \geq 1$.
\end{proof}

\begin{rmk}
It would be of interest to find a proof of Theorem \ref{t:ASD} that uses our explicit formulae for the modular forms $f_{n,r}$ in place of the use of Theorem 6.1 of \cite{K1}. Presumably such a proof would then generalize to Cases B and C of this paper, where the minimal weights are $4$ and $6$.\ For this it might be useful to note that $\eta(6\tau)^4$ is the newform corresponding to the elliptic curve $y^2 = x^3+1$ of conductor $36$, whose $q$-expansion is well-understood (cf. Proposition 8.5.3 of \cite{C}).
\end{rmk}

\bibliographystyle{plain}

\end{document}